\theoremstyle{plain} 
\numberwithin{equation}{section}
\newtheorem{theorem}{Theorem}[section]
\newtheorem{corollary}[theorem]{Corollary}
\newtheorem{lemma}[theorem]{Lemma}
\newtheorem{sublemma}[theorem]{Sublemma}
\newtheorem{proposition}[theorem]{Proposition}
\theoremstyle{definition}
\newtheorem{remark}[theorem]{Remark}
\newtheorem{claim}[theorem]{Claim}
\theoremstyle{definition}
\newtheorem{definition}[equation]{Definition}
\newcommand{\dv}{\operatorname{div}}
\newcommand{\dist}{\operatorname{dist}}
\newcommand{\uh}{\mathbb{R}_{+}^{n+1}}
\newcommand{\rn}{\mathbb{R}^n}
\newcommand{\C}{\mathcal{C}}
\newcommand{\CC}{\mathbb{C}}
\newcommand{\MM}{\mathbb{M}}
\newcommand{\AC}{\mathcal{A}}
\newcommand{\SL}{\mathcal{S}}
\newcommand{\RR}{\mathbb{R}}
\newcommand{\DD}{\mathbb{D}}
\newcommand{\DDT}{\widetilde{\mathbb{D}}}
\newcommand{\AD}{\mathcal{A}^{\rm dyad}}
\newcommand{\ADQ}{\mathcal{A}_Q^{\rm dyad}}
\newcommand{\ADQG}{\mathcal{A}_{Q,\,\Gamma^{\epsilon}}^{\rm dyad}}
\newcommand{\GD}{\gamma^{\rm dyad}}
\newcommand{\GDT}{\tilde{\gamma}^{\rm dyad}}
\newcommand{\GTQM}{\widetilde{G}_{Q,M}}
\newcommand{\eps}{\varepsilon}
\newcommand{\appendicesB}{\par
\if@chapter@pp
\setcounter{chapter}{0}%
\setcounter{section}{0}%
\gdef\@chapapp{\appendixname}%
\gdef\thechapter{B\c@chapter}
\else
\setcounter{section}{0}%
\setcounter{subsection}{0}%
\gdef\thesection{B}
\fi
}
\newcommand{\appendicesA}{\par
\if@chapter@pp
\setcounter{chapter}{0}%
\setcounter{section}{0}%
\gdef\@chapapp{\appendixname}%
\gdef\thechapter{A\c@chapter}
\else
\setcounter{section}{0}%
\setcounter{subsection}{0}%
\gdef\thesection{A}
\fi
}
\def\@makechapterhead#1{      \null
     \begin{center} 
       APPENDIX \thechapter\\
       #1
      \end{center}
     \nobreak
}
\newcommand{\ree}{\mathbb{R}^{n+1}}
\begin{document}

\title{Generalized local $Tb$ Theorems for Square Functions, and applications}

\author{Ana Grau de la Herr\'an}	
\address{DEPARTMENT OF MATHEMATICS AND STATISTICS, UNIVERSITY OF HELSINKI, FINLAND}
\email{ana.grau@helsinki.fi}
\author{Steve Hofmann}\
\address{DEPARTMENT OF MATHEMATICS, UNIVERSITY OF MISSOURI, COLUMBIA}
\email{hofmanns@missouri.edu}

\begin{abstract}  A local $Tb$ theorem is an $L^2$ boundedness criterion by which  
the question of the global behavior of an operator is reduced to its local behavior,
acting on a family of test functions $b_Q$ indexed by the dyadic cubes. We present several
versions of such results, in particular, treating square function operators whose kernels do 
not satisfy the standard Littlewood-Paley pointwise estimates.   As an application
of one version of the local $Tb$ theorem, 
we show how the solvability of the Kato problem (which was implicitly based on local $Tb$ theory) 
may be deduced
from this general criterion.  We also present another version, from which we deduce
boundedness of  layer potentials associated to certain complex elliptic operators in divergence form.
\end{abstract}

\maketitle

\tableofcontents

\section{Introduction, history, preliminaries\label{c1.s1}}

The Tb theorem, like its predecessor, the T1 Theorem, is an $L^2$ boundedness criterion, originally proved by McIntosh and Meyer \cite{McM}, and by David, Journ\'e and Semmes \cite{DJS} in the context of singular integrals, but later extended by Semmes \cite{Se} to the setting of ``square functions". The latter arise in many applications in complex function theory and in PDE, and may be viewed as singular integrals taking values in a Hilbert space.  
  
 The essential idea of Tb and T1 type theorems, is that they reduce the question of $L^2$ boundedness to verifying the behavior of an operator on a single test function b (or even the constant function 1). 
  The ``local" versions that we have obtained are related to previous work of M. Christ \cite{Ch}, who proved the first local Tb theorem in the singular integral setting.  
  The term ``local" in this context, refers to the fact that, instead of one globally defined testing function b, one is allowed to test the operator locally, say on each dyadic cube, with a local testing function that is adapted to that cube.  The advantage here, in applications, is the additional flexibility that one gains:  it may be easier to verify ``good" behavior of the operator locally, when the testing functions are allowed to vary.  The point is that sometimes particular properties of the operator may be exploited to verify the appropriate testing criterion.

  Extensions of Christ's result to the non-doubling setting are due to Nazarov, Treil and Volberg \cite{NTV} and Hyt\"onen and Martikainen \cite{HyM}. For doubling measures, one can also consider more general $L^p$ type testing conditions introduced by Auscher, Hofmann, Muscalu, Tao and Thiele \cite{AHMTT}, and further studied by Hofmann \cite{H3}, Auscher and Yan \cite{AY}, Auscher and Routin \cite{AR}, Hyt\"onen and Martikainen \cite{HyM}, Hyt\"onen and Nazarov \cite{HyN}, 
  and Tan and Yan \cite{TY}.
 
   In fact, this sort of ``local Tb" criterion, in the square function setting, 
   lies at the heart of the solution of the Kato square root problem,
  and was already implicit there, see  \cite{HMc},\cite{HLMc},\cite{AHLMcT}, (and see also \cite{AT} and \cite{Se} for related results);  the connection to local $Tb$ theory is discussed in the
  survey article \cite{H1}.    
  One of the aims of the present paper is to make this connection totally explicit:
  that is, we prove a general version of the local $Tb$ theorem for square functions from which the
  solution of the Kato problem follows directly.  In particular, this requires that one replace pointwise size
  and smoothness conditions on the kernel by appropriate integral decay and orthogonality conditions.
   
Moreover, we further generalize the local $Tb$ Theorem for square functions,
to allow for relaxed size conditions on the testing functions $b_Q$
(i.e., scale invariant $L^p$ bounds, with $p>1$, rather than $p=2$).  In particular,
we extend the result of \cite{H2} in several ways, allowing for matrix-valued
$b_Q$'s, and for the removal of pointwise kernel conditions.  
We mention that another extension (of the main result of \cite{H2}) of a different sort, to the setting of 
open sets with Ahlfors-David regular boundaries,  is presented by A. Grau de la Herr\'{a}n and M. Mourgoglou \cite{GM}, and has applications to problems that connect the behavior of the harmonic measure for domains with quantitative rectifiability properties of the boundary (see \cite{HMar} and \cite{HMarUT}).

We also present a new sort of local $Tb$ theorem, one in which both the
kernels of the square function, and the testing functions themselves, are vector-valued
(previous results for vector-valued kernels have utilized matrix-valued testing functions.)
We then apply this result to give a direct proof of a recent result of Rosen \cite{R}, concerning the boundedness of layer potentials associated to divergence form complex elliptic operators,
in the half-space $\uh$,
with $t$-independent, bounded measurable coefficients.
Rosen's original proof had relied upon functional calculus results
generalizing the Kato problem (and thus also based upon 
local $Tb$ technology),  obtained in \cite{AAMc}. 
Previously,  such layer potential bounds had been known only for real or constant coefficients,
and their perturbations \cite{AAAHK}, \cite{HKMP}.

\subsection{Summary of results}  We present here a brief synopsis of the results that we prove in this paper, and provide some explanatory context.  
The precise statements of our theorems will appear in the sequel, as noted.
In these results, $\{\Theta_t\}$ will be a family of  operators mapping
$L^2(\rn,\CC^m)$, $m\geq 1$,  
uniformly into $L^2(\rn,\CC)$, for which we seek
to prove the square function bound
\begin{equation}\label{eq1.1}
\int_{\rn} \big(g_\Theta\left(f\right)(x)\big)^2\, dx:=
\iint_{\uh}|\Theta_t f(x)|^2 \frac{dtdx}{t} \, \leq \, C\,\|f\|_{L^2(\rn)}^2
\end{equation}

\smallskip
\noindent {\bf Theorem  \ref{1.pointwisetheorem}}.    We suppose that the $\CC^m$-valued
kernel of $\Theta_t$ satisfies appropriate pointwise size and smoothness conditions,
and that we are given a family $\{b_Q\}$ of matrix-valued testing functions, 
indexed by the dyadic cubes, 
which satisfies a scale invariant $L^p$ condition, for some $p>1$, and an appropriate 
accretivity condition.  Then, given local $L^p$ control on $Q$, of a localized version of
$g_\Theta(b_Q)$, we obtain \eqref{eq1.1}.
This theorem is stated and proved in Section \ref{c2}.

\smallskip
\noindent {\bf Theorem  \ref{3.genmatrixtheorem}}.  We prove a version of Theorem
 \ref{1.pointwisetheorem}, in which the pointwise kernel conditions are replaced by
appropriate integral conditions including
 ``off-diagonal decay" and quasi-orthogonality 
 in $L^2$.   In this setting, one requires
local control of a conical (as opposed to vertical) square function acting on $b_Q$.
 This theorem is stated and proved in Section \ref{c4}.
 
 \smallskip
\noindent {\it Remark}.  The point of Theorems \ref{1.pointwisetheorem} and  \ref{3.genmatrixtheorem},
is that they allow for a weaker size condition ($L^p$, with $p>1$) on the testing functions, in the
vector-valued setting (i.e., with matrix-valued testing functions).  Previous such results
had entailed either $L^2$ testing conditions, or had been restricted to the scalar-valued setting;
see, e.g., \cite{H1}, \cite{H2}, \cite{H4}.  In the case of Theorem  \ref{3.genmatrixtheorem},
the range of $p$ allowed for the testing functions is constrained by the range of ``hypercontractive"
estimates enjoyed by $\Theta_t$.
 
 \smallskip
\noindent {\bf Theorem  \ref{t4.2}}.  We prove a variant of Theorem  \ref{3.genmatrixtheorem},
in which the size condition on the testing functions is strengthened to 
require scale invariant bounds in $L^2$ (and not just $L^p$ for some $p>1$), but in which
quasi-othogonality is assumed to hold only on a subspace $H$ of $L^2(\rn,\CC^m)$.
We then obtain \eqref{eq1.1} for $f\in H$.  Here, the column vectors of the matrix valued
testing functions are assumed to belong to $H$.
 This theorem is stated and proved in Section \ref{s4}.
 
  \smallskip
\noindent {\bf Theorem  \ref{th6.1}}.  We show that the solution of the Kato square root problem
may be deduced as a consequence of Theorem \ref{t4.2}.
 This theorem is stated and proved in Section \ref{c6}.
 
   \smallskip
\noindent {\it Remark}.  As mentioned above, local $Tb$ theory was implicit in the
solution of the Kato problem.  Theorems  \ref{t4.2} and \ref{th6.1} make this connection
completely explicit.

   \smallskip
\noindent {\bf Theorem  \ref{t6.1}}.  We prove a version of Theorem  \ref{t4.2}, in 
which the testing functions are vector-valued, rather than matrix-valued.
 This theorem is stated and proved in Section \ref{s6}.

    \smallskip
\noindent {\bf Theorem  \ref{t7.1}}.   We apply Theorem \ref{t6.1} to establish
$L^2$ bounds for layer 
potentials associated to a divergence form (complex) elliptic operator $L$
in $\mathbb{R}^{n+1}_+$, with $t$-independent coefficients, assuming that 
null solutions of $L$ satisfy local H\"older continuity estimates of De Giorgi/Nash type.   
Thus, we recover a result obtained in \cite{R}, but by a direct proof which bypasses the 
functional calculus formalism elaborated in \cite{AAMc}.
 This theorem is stated and proved in Section \ref{s7}.

   \smallskip
\noindent {\it Remark}.   The novelty of Theorem \ref{t6.1} is that the testing functions are
{\it vector}-valued rather than matrix-valued.  This point of view turns out to be essential
to our approach to the application to layer potentials.
As regards Theorem  \ref{t7.1}, we point out that, at present, 
our direct treatment of layer potentials relies on estimates for
the fundamental solution, proved in \cite{HK} and \cite{AAAHK}, which assume De Giorgi/Nash type bounds.
On the other hand, the results of \cite{R} and \cite{AAMc}, in conjunction, show that
even in the absence of  De Giorgi/Nash bounds,
one may define layer potentials on $L^2$, via the functional calculus of \cite{AAMc}.
In a forthcoming paper, we plan to extend our intrinsic development of the layer potential theory to the general case,
i.e., without the  De Giorgi/Nash hypothesis.

 \subsection{Acknowledgements}

We are grateful to Tuomas Hyt\"onen, for suggesting that we try to deduce the solution of the Kato problem directly from the general theorems obtained in the dissertation of the first named author. 
We also thank Jos\'e Mar\'ia Martell, for useful discussions concerning extrapolation theory for
$A_p$ weights,  which have helped us to obtain a sharper range of exponents ``$p$" 
in Theorem  \ref{3.genmatrixtheorem}.

\subsection{Notation}\label{c1.s2}
\begin{list}{\labelitemi}{\leftmargin=1em}
\item We shall use the letters $c$, $C$ to denote positive constants, not necessarily the same at each occurrence, which depend only on dimension and the constants appearing in the hypotheses of the theorems. We shall also write $A \lesssim B$ and $A \approx B$ to mean, respectively, that $A \leq C B$ and $0 < c \leq A/B \leq C$, where the constants $c$ and $C$ are as above, unless explicitly noted.
 Moreover if we want to specify any particular dependency of the constant we will denote it by 
 subscript or by $C(\cdot)$, e.g., $C_n$ or $C(n)$ is a constant that depends on dimension $n$.
\item We  denote points in $\mathbb{R}^{n+1}$ by $(x,t)\in\mathbb{R}^n\times\mathbb{R}=\mathbb{R}^{n+1}$ (we 
use the notational convention that $x_{n+1}=t$), or sometimes, for convenience, by capital letter $X$.
\item  We set $\mathbb{R}^{n+1}_+:=\mathbb{R}^n\times(0,+\infty)$ and $\partial\mathbb{R}^{n+1}_+:=\mathbb{R}^n\times\{0\}$.
\item For a Borel set $A \subset \mathbb{R}^{n+1}$, we let $\mathbbm{1}_A$ denote the usual indicator function of $A$, i.e. $\mathbbm{1}_A(x) = 1$ if $x \in A$, and $\mathbbm{1}_A(x) = 0$ if $x \notin A$.
\item The letter $Q$ will be used to denote a cube in $\mathbb{R}^n$, and we shall write
$Q_r$ to denote that the cube has side length $\ell(Q)=r$.
\item We let $\mathbb{D}$ denote the collection of all closed dyadic cubes in $\mathbb{R}^{n}$,
and let $\mathbb{D}_k$ denote the grid of dyadic cubes of side length $2^{-k}$.
\item For a Borel set $A\subset\mathbb{R}^n$,  a Borel measure $\mu$ defined on $\rn$, and a Borel measurable function $f$,
we set $\fint_Af\,d\mu=\frac{1}{\mu(A)}\int_Af\,d\mu$.  
\item Let $q\in[1,\infty]$, we denote by $q'\in[1,\infty]$, the number such that we have $\frac{1}{q}+\frac{1}{q'}=1$, where as usual we define $1/\infty := 0$.
\item For a Banach space $X$, we let $\mathcal{B}(X)$ denote the space of bounded linear operators
on $X$.
\end{list}

\subsection{Some Standard Definitions}\label{c1.s3}

\begin{definition}
We define the Hardy Littlewood Maximal operator  $\mathcal{M}$,  acting on 
$f\in L^1_{loc}(\mathbb{R}^n)$, 
by $$\mathcal{M}(f)(x):=\displaystyle \sup_{r>0}\frac{1}{|B_r(x)|}\int_{B_r(x)}f(y)\,dy\,,$$
\noindent where $B_r(x)$ is the ball centered at x and radius r.
\end{definition}

\begin{definition}
We say that $P_t$ is a nice approximate identity, if $P_t$ is an operator of convolution type, with a smooth, compactly supported kernel $\Phi$. That means 
that for a function $f:\mathbb{R}^n\rightarrow\mathbb{C}$ 
\begin{equation*}  P_tf=\Phi_t*f \,,\, with \,\, \Phi_t=t^{-n}\Phi\left(\frac{x}{t}\right),\int\Phi(x)dx=1\,, \ \ \Phi_t\in\mathcal{C}_0^{\infty}(\mathbb{R}^n).
\end{equation*}
In particular then, $P_t1=1$, and
\begin{equation*} 
(P_tf)(x)\leq C\mathcal{M}f(x)\,.
\end{equation*}
\label{Ptdefinition}\end{definition}

\begin{definition}
For $0<s\leq 1$, the homogeneous Sobolev space $\dot{L}^2_s$ is the completion of $C_0^\infty$ with respect to the norm
$\|f\|_{\dot{L^2_s}}:=\|(-\Delta)^{s/2}) f\|_{L^2},$ where $\Delta$ is the usual Laplacian.

\end{definition}

\begin{definition}\cite{St2}
 If $0<\alpha<n$, then the Riesz potential $I_{\alpha}f$ of a locally integrable function f on $\mathbb{R}^n$ is the function defined by
\begin{equation*}
(-\Delta)^{-\alpha/2}f(x)
=I_{\alpha}f(x)=\frac{1}{C_{\alpha,n}}\int_{\mathbb{R}^n}\frac{f(y)}{|x-y|^{n-\alpha}}dy
\end{equation*}
where the constant is given by $C_{\alpha,n}=\pi^{n/2}2^{\alpha}\frac{\Gamma(\alpha/2)}{\Gamma((n-\alpha)/2)}$.
This fractional integral is well-defined provided f decays sufficiently rapidly at infinity, specifically if $f\in L^p(\mathbb{R}^n)$ with $1\leq p<\frac{n}{\alpha}$.  
\label{Rieszpotential}\end{definition}








In the sequel, we shall use the following result.
\begin{lemma}\cite[Lemma 3.11]{AAAHK}\label{lemma3.11AAAHK}
Suppose that $\theta_t$ is an operator satisfying
\begin{equation}\label{eq1.gaff}
\Vert\theta(f\mathbbm{1}_{2^{k+1}Q\setminus 2^kQ})\Vert_{L^2(Q)}^2\leq C2^{-(n+2)k}\,
\Vert f\Vert_{L^2(2^{k+1}Q\setminus 2^kQ)}\,,
\end{equation}
whenever $t\approx \ell(Q)$, and that $\Vert\theta_t\Vert_{2->2}\leq C$. Let $b\in L^{\infty}(\mathbb{R}^n)$, and let $\mathcal{A}_t$ denote a self-adjoint averaging whose kernel $\varphi_t(x)$ satisfies $|\varphi_t(x)|\leq Ct^{-n}\mathbbm{1}_{\{|x|<Ct\}}$, $\varphi_t\geq0$,$\int\varphi_t(x)dx=1$. Then
\begin{equation*}
\displaystyle\sup_{t>0}\Vert(\theta_tb)\mathcal{A}_tf\Vert_{L^2(\mathbb{R}^n)}\leq C\Vert b\Vert_{L^{\infty}(\mathbb{R}^n)}\Vert f\Vert_{L^2(\mathbb{R}^n)}.
\end{equation*}
\end{lemma}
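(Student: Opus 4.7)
The plan is to fix $t>0$, partition $\rn$ into a family of (dyadic) cubes $Q$ with $\ell(Q)\approx t$, and estimate on each $Q$ separately, exploiting two features: the near-constancy of $\mathcal{A}_t f$ on $Q$ coming from the compact support of the kernel $\varphi_t$, and a local $L^2$ bound on $\theta_t b$ derived from the Gaffney-type estimate \eqref{eq1.gaff}. Since $\varphi_t$ is supported in $\{|x|<Ct\}$ and $t\approx \ell(Q)$, for every $x\in Q$ the value $(\mathcal{A}_t f)(x)$ depends only on $f$ restricted to some fixed dilate $C_0 Q$; moreover the pointwise kernel bound $\varphi_t\leq C t^{-n}\mathbbm{1}_{\{|x|<Ct\}}$ yields the uniform estimate
$$\sup_{x\in Q}|\mathcal{A}_t f(x)|\ \leq\ C\fint_{C_0 Q}|f|.$$

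The main step is the localized bound
$$\|\theta_t b\|_{L^2(Q)}\ \leq\ C\|b\|_{L^\infty}|Q|^{1/2}.$$
This is obtained by decomposing $b=\sum_{k\geq 0} b\,\mathbbm{1}_{A_k}$, with $A_0=2Q$ and $A_k=2^{k+1}Q\setminus 2^k Q$ for $k\geq 1$. The $k=0$ term is handled directly by the global bound $\|\theta_t\|_{2\to 2}\leq C$ together with $\|b\,\mathbbm{1}_{2Q}\|_{L^2}\leq C|Q|^{1/2}\|b\|_{L^\infty}$. For $k\geq 1$, the hypothesis \eqref{eq1.gaff} gives
$$\|\theta_t(b\,\mathbbm{1}_{A_k})\|_{L^2(Q)}^2\ \leq\ C\,2^{-(n+2)k}\,\|b\,\mathbbm{1}_{A_k}\|_{L^2}^2\ \lesssim\ 2^{-(n+2)k}\cdot 2^{kn}|Q|\,\|b\|_{L^\infty}^2\ =\ 2^{-2k}|Q|\,\|b\|_{L^\infty}^2,$$
so the geometric series in $k$ converges and delivers the asserted bound.

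Combining the two ingredients on each cube yields
$$\int_Q |\theta_t b|^2\,|\mathcal{A}_t f|^2\,dx\ \leq\ \|\mathcal{A}_t f\|_{L^\infty(Q)}^2\,\|\theta_t b\|_{L^2(Q)}^2\ \leq\ C\|b\|_{L^\infty}^2\int_{C_0 Q}|f|^2,$$
the last step using Jensen's inequality to pass from $|Q|\bigl(\fint_{C_0 Q}|f|\bigr)^2$ to $\int_{C_0 Q}|f|^2$ (with an absolute constant since $|Q|\leq|C_0 Q|$). Summing over the family $\{Q\}$, whose dilates $\{C_0 Q\}$ have bounded overlap depending only on dimension, concludes the proof with a bound independent of $t$.

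The principal obstacle is the local $L^2$ estimate on $\theta_t b$: because $b\in L^\infty$ rather than $L^2$, the global $L^2$ boundedness of $\theta_t$ is insufficient by itself, and one must invoke \eqref{eq1.gaff} to balance the volume growth $|A_k|^{1/2}\approx 2^{kn/2}|Q|^{1/2}$ against the off-diagonal decay factor $2^{-(n+2)k/2}$; this is precisely where the exponent $n+2$ in the hypothesis (rather than merely $n$) is used. Once this local bound is in hand, the remaining argument is a routine manipulation of a compactly supported approximate identity.
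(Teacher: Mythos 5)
Your proof is correct and is essentially the standard argument; note that the paper merely cites this result from \cite{AAAHK} and does not reproduce a proof, so there is nothing in the present text to compare against, but your localization onto a partition into cubes of sidelength comparable to $t$, the off-diagonal summation in annuli to get $\|\theta_t b\|_{L^2(Q)}\lesssim\|b\|_\infty |Q|^{1/2}$, the near-constancy of $\mathcal{A}_t f$ on each such cube, and the final bounded-overlap summation is exactly the argument in \cite{AAAHK}. You also read the hypothesis \eqref{eq1.gaff} correctly: the right-hand side should carry $\|f\|_{L^2(2^{k+1}Q\setminus 2^kQ)}^2$ (a typo in the statement, as the unsquared version is inconsistent with homogeneity and with \eqref{3.annulibound}), and your computation uses this correct normalization.
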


\section{Local Tb Theorem for Square functions with vector-valued kernels}\label{c2}

 In this section we extend the main theorem of \cite{H2} to the setting where
the kernels take values in $\CC^m,\, m\geq 1$ (the case $m=1$ is the result of
\cite{H2}).   This entails that the testing functions are now matrix valued.
To handle
this more general situation,
we follow the sectorial decomposition technique in \cite{HLMc} and \cite{AHLMcT}, but 
in treating matrix valued testing functions belonging only to $L^p$ with $p<2$,
there are certain technical difficulties
which are not present either in the case $p=2$, or in the scalar case for any $p$.
We let $\mathbb{M}^{m}$ denote the $m\times m$ matrices with complex entries.
Here, $m$ and $n$ are not required to be equal.

\begin{definition}

Suppose that $\Psi_t=(\Psi_t^1,...,\Psi_t^m):\mathbb{R}^n\times\mathbb{R}^n\rightarrow\mathbb{C}^{m}$  satisfies the following properties for some exponent $\alpha>0$

\begin{equation}
|\Psi_t(x,y)|\leq C\frac{t^{\alpha}}{(t+|x-y|)^{n+\alpha}}, 
\label{1.pointwisebound}\end{equation}

\begin{equation}
|\Psi_t(x,y+h)-\Psi_t(x,y)|+|\Psi_t(x+h,y)-\Psi_t(x,y)|\leq C\frac{|h|^{\alpha}}{(t+|x-y|)^{n+\alpha}}\,,
\label{1.pointwisecontinuity}\end{equation}
whenever 
$|h|\leq t/2$.
Then  for vector valued $f:\mathbb{R}^n\rightarrow\mathbb{C}^m$, we define
the operator
\begin{equation}
\Theta_t f(x)=\int_{\mathbb{R}^n}\Psi_t(x,y)\cdot f(y)\,dy 
:=\sum_{j=1}^m \int_{\mathbb{R}^n}\Psi_t^j(x,y)\, f_j(y)\,dy\,.
\label{1.pointwisesquarefunction}\end{equation}
 We also define $\Theta_t$ acting on matrix valued
$b=(b_{ij})_{1\leq i,j\leq m}:\RR^n\to\MM^m$ in the obvious way, 
by viewing the kernel $\Psi_t(x,y)$ as a $1\times m$ matrix which multiplies the $m\times m$ matrix
$b$, i.e.,  
\begin{equation}
\Theta_t b(x)=\left(\sum_{i=1}^m \int_{\mathbb{R}^n}\Psi_t^i(x,y)\, b_{ij}(y)\,dy\right)_{\! 1\leq j\leq m}\,.
\label{1.pointwisesquarefunction2}\end{equation} 
\end{definition}

\begin{theorem}\label{1.pointwisetheorem}

We define  $\Theta_t$ as above and suppose that there exists  constants 
$\delta>0$ and $C_0<\infty$, an exponent $p>1$,  and a system $\{b_Q\} \subset L^p(\rn,\MM^m),$ indexed by dyadic cubes $Q\subset\mathbb{R}^n$, such that for each dyadic cube Q, we have
\begin{equation}
\int_{\mathbb{R}^n}|b_Q(x)|^pdx\leq C_0|Q|,
\label{1.pbQcondition}\end{equation}
\begin{equation}
\int_Q\left(\int_0^{\ell(Q)}|\Theta_tb_Q(x)|^2\frac{dt}{t}\right)^{\frac{p}{2}}dx\leq C_0|Q|,
\label{1.thetabQcondition}\end{equation}
\begin{equation}
\delta|\xi|^2\leq \mathcal{R}e\left(\xi\cdot\fint_Qb_Q(x)dx\,\bar{\xi}\right), \ \forall\xi\in\mathbb{C}^m\,.
\label{1.ellipticitybQcond}\end{equation}
Then
\begin{equation}
\iint_{\mathbb{R}_+^{n+1}}|\Theta_t f(x)|^2\frac{dxdt}{t}\leq C||f||_2^2.
\label{1.conclusion}\end{equation}

\end{theorem}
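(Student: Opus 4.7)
\noindent\emph{Proof plan.} The strategy is the now-standard local $Tb$ reduction to a Carleson measure estimate on a principal part of $\Theta_t$. Let $\gamma_t(x)\in\CC^m$ denote $\Theta_t$ applied to the constant identity matrix (meaningful via a standard truncation justified by \eqref{1.pointwisebound}), and let $A_t$ denote a dyadic averaging operator at scale $\approx t$. I will write
\begin{equation*}
\Theta_t f(x) \,=\, \bigl(\Theta_t f - \gamma_t\cdot A_t f\bigr)(x) \,+\, \gamma_t(x)\cdot A_t f(x).
\end{equation*}
A $T1$-type quasi-orthogonality argument (Schur test plus Cotlar--Knapp--Stein), invoking the kernel estimates \eqref{1.pointwisebound}--\eqref{1.pointwisecontinuity} and the nice approximate identity $P_t$ of Definition~\ref{Ptdefinition}, bounds the first term uniformly for $f\in L^2$; the Carleson embedding theorem bounds the second, provided $|\gamma_t(x)|^2\,dx\,dt/t$ is a Carleson measure. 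Thus the task reduces to proving
\begin{equation*}
\sup_{Q_0\in\DD} \frac{1}{|Q_0|}\int_{Q_0}\!\!\int_0^{\ell(Q_0)} |\gamma_t(x)|^2 \,\frac{dt\,dx}{t} \,\leq\, C.
\end{equation*}

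To establish this Carleson bound I fix $Q_0\in\DD$ and run a stopping-time on the subcubes of $Q_0$ using $b_{Q_0}$ as the local pseudo-accretive system. I declare a maximal $Q\subset Q_0$ a stopping cube if either (a) $\fint_Q|b_{Q_0}|^p > K$ for a constant $K=K(C_0,\delta)$ chosen large, or (b) the matrix average $\fint_Q b_{Q_0}$ fails a \emph{directional} lower ellipticity bound along one of finitely many cones $\Gamma_\nu$ covering the unit sphere in $\CC^m$, in the spirit of the sectorial decomposition of \cite{HLMc,AHLMcT}. Hypothesis \eqref{1.pbQcondition} and Chebyshev bound the mass of $(a)$-cubes by $(C_0/K)|Q_0|$; a continuity/packing argument using \eqref{1.ellipticitybQcond} bounds the $(b)$-cubes in each sector; for $K$ sufficiently large, the stopping cubes occupy measure at most $\eta|Q_0|$ with $\eta$ small, so the good sawtooth $E^*_{Q_0}\subset Q_0\times(0,\ell(Q_0))$ exhausts most of the Carleson box. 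On $E^*_{Q_0}$ I exploit the identity
\begin{equation*}
\gamma_t(x)\cdot\Bigl(\fint_{Q(x,t)} b_{Q_0}\Bigr) \,=\, \Theta_t b_{Q_0}(x) \,-\, \Theta_t\bigl(b_{Q_0} - A_t b_{Q_0}\bigr)(x),
\end{equation*}
where $Q(x,t)$ is the dyadic cube of side $\approx t$ containing $x$. Directional ellipticity yields $|\gamma_t(x)|\lesssim|\gamma_t(x)\cdot\fint_{Q(x,t)} b_{Q_0}|$ on each sectorial subregion, and the two right-hand contributions are controlled, respectively, by the local testing hypothesis \eqref{1.thetabQcondition} (combined with H\"older's inequality and the stopping-time bound on $\fint|b_{Q_0}|^p$) and by a classical off-diagonal/smoothing error using \eqref{1.pointwisecontinuity}, Lemma~\ref{lemma3.11AAAHK}, and the cancellation between $\Theta_t$ and $A_t$. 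A sawtooth induction over $Q_0$ absorbs the stopping-cube contribution via the smallness of $\eta$, closing the Carleson estimate.

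The principal obstacle is the mismatch between the $L^p$ size hypothesis \eqref{1.pbQcondition}, with $p$ possibly strictly less than $2$, and the $L^2$ character of the square function norm \eqref{1.conclusion}: when $p<2$ one cannot simply invert $\fint_Q b_{Q_0}$ to extract a pointwise $L^\infty$ bound, as is done in the scalar or $p=2$ treatments of \cite{H2,H1}. The remedy, which is the technical novelty of the argument, is to \emph{combine} the sectorial decomposition of \cite{HLMc,AHLMcT} --- which replaces matrix accretivity by finitely many scalar directional accretivities --- with the $L^p$-type stopping from the Auscher--Hofmann--Muscalu--Tao--Thiele local $Tb$ framework, making these two refinements coexist; Lemma~\ref{lemma3.11AAAHK} is then the bridge that converts the $L^p$ information on $b_{Q_0}$ into the $L^2$-level quantities required to close the Carleson iteration in the matrix-valued, sub-$L^2$ regime.
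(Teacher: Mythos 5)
Your broad scheme --- $T1$ reduction to a Carleson measure for $\Theta_t1$, a sectorial stopping time on $b_{Q_0}$ to handle matrix values, a Coifman--Meyer splitting to isolate $\Theta_t b_{Q_0}$, and an iterative absorption --- does match the paper. But two ingredients of your plan are incorrect or missing, and they are precisely where the difficulty of the $p<2$, matrix-valued case lies.

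First, the displayed algebraic identity is false. Expanding, it reads $\gamma_t(x)\cdot A_t b_{Q_0}(x)=\Theta_t(A_tb_{Q_0})(x)$, i.e., that applying $\Theta_t$ to $A_tb_{Q_0}$ is the same as pointwise multiplication of $A_tb_{Q_0}$ by the vector $\Theta_t1(x)$; there is no reason for that to hold. The correct decomposition interpolates through the smooth approximate identity $P_t$:
\begin{equation*}
(\Theta_t1)\,A_t b_Q \,=\, (\Theta_t1)(A_t-P_t)b_Q \,+\, \big((\Theta_t1)P_t-\Theta_t\big)b_Q \,+\, \Theta_tb_Q \,=:\, R_t^{(1)}b_Q+R_t^{(2)}b_Q+\Theta_tb_Q\,,
\end{equation*}
whose point is that both remainders annihilate $1$ and satisfy standard kernel estimates of types \eqref{1.pointwisebound}--\eqref{1.pointwisecontinuity}, so they are controlled in $L^p$ by vector-valued Calder\'on--Zygmund theory (for $R_t^{(2)}$) and by a square-function bound for $A_t-P_t$ (for $R_t^{(1)}$). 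Dropping the intermediate $P_t$ destroys the kernel regularity you need; and Lemma~\ref{lemma3.11AAAHK} plays no role here, since pointwise kernel bounds are assumed --- it is the tool for the non-pointwise versions in later sections.

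Second, and more seriously, the ``sawtooth induction'' is not a routine John--Nirenberg iteration when $p<2$. After the absorption on the sawtooth $E^*_{Q_0}$, what the testing hypothesis \eqref{1.thetabQcondition} together with $L^p$ bounds for the remainders gives you is only
\begin{equation*}
\int_{Q_0}\Big(\int_{\tau_{Q_0}(x)}^{\ell(Q_0)}|\Theta_t1(x)|^2\,\mathbbm{1}_{\Gamma^{2\epsilon}}(\Theta_t1(x))\,\frac{dt}{t}\Big)^{p/2}\,dx\,\lesssim\,|Q_0|\,,
\end{equation*}
an $L^{p/2}$ bound with $p/2<1$, which is \emph{strictly weaker} than the $L^1$ (Carleson-box) control the standard good-$\lambda$ / J--N iteration requires; H\"older applied to this quantity goes in the wrong direction and cannot close the gap, and the stopping-time bound on $\fint|b_{Q_0}|^p$ does not help upgrade it. What makes the argument close is an extra reduction: by Chebyshev the $L^{p/2}$ bound yields a distributional estimate $|\{x\in Q: g_Q(x)>N\}|\le(1-\beta)|Q|$, and one must then run a separate self-improving Carleson argument for a smoothly truncated, cone-restricted square function, using a Whitney decomposition of the level set, the kernel H\"older regularity \eqref{1.pointwisecontinuity} to transfer information to nearby Whitney centers, and a hiding argument for the finite truncation constant. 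That weak-type intermediate step, and the regularity argument behind it, are the technical core of the proof; your plan omits them. A minor further point: the type-I threshold must be coupled to the cone aperture $\epsilon$ (in the paper, $\fint|b_Q|\ge 1/(8\epsilon)$), so that on the good set the triangle inequality $\frac34 - 2\epsilon\cdot\frac1{8\epsilon}=\frac12$ closes; taking a threshold $K(C_0,\delta)$ unrelated to $\epsilon$ will not give the absorption estimate as stated.
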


The outline of the proof goes as follows.   By the T1 Theorem of \cite{CJ}, Theorem \ref{1.T1theorem}, we reduce matters to showing that our operator satisfies the Carleson measure estimate \ref{1.Carlesonmeasurestimate}. Then the proof has three steps: 1) the conditions of Theorem \ref{1.pointwisetheorem} imply the conditions of Lemma \ref{1.lemma}; 2)  the conditions of Lemma \ref{1.lemma} imply the conditions of Sublemma \ref{1.sublemma}. Finally Sublemma \ref{1.sublemma} establishes the Carleson measure estimate \ref{1.Carlesonmeasurestimate}, which by the T1 theorem leads to our conclusion.

Let us first state these results, and then we will start with the proofs.

\begin{theorem} (T1 Theorem of \cite{CJ}).\label{1.T1theorem}
Let $\Theta_tf(x)\equiv\int_{\mathbb{R}^n}\Psi_t(x,y)\cdot f(y)dy$, where the
kernel $\Psi_t(x,y)$ satisfies conditions (\ref{1.pointwisebound}) and (\ref{1.pointwisecontinuity}) as above.  Suppose that we have the Carleson measure estimate
\begin{equation}
\displaystyle\sup_Q\frac{1}{|Q|}\int_0^{\ell(Q)}\int_Q|\Theta_t1(x)|^2\frac{dxdt}{t}\leq C\,,
\label{1.Carlesonmeasurestimate}\end{equation}
where ``1" in this context denotes the $m\times m$ identity matrix.
Then we have the square function estimate
\begin{equation}
\iint_{\mathbb{R}_+^{n+1}}|\Theta_t f(x)|^2\frac{dxdt}{t}\leq C||f||_2^2\,.
\end{equation}
\end{theorem}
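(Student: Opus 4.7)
The plan is to follow the classical Christ--Journ\'e/Semmes strategy for a $T1$ theorem for square functions, reduced to the scalar setting componentwise. Fix a nice approximate identity $P_t$ (Definition~\ref{Ptdefinition}) and perform the paraproduct decomposition
\begin{equation*}
\Theta_t f(x) \;=\; \bigl(\Theta_t f(x) - (\Theta_t 1)(x)\cdot P_t f(x)\bigr) \,+\, (\Theta_t 1)(x)\cdot P_t f(x) \;=:\; R_t f(x) + \Pi_t f(x),
\end{equation*}
where ``$1$'' is the $m\times m$ identity matrix, so $(\Theta_t 1)(x)$ is the $\CC^m$-valued function whose $j$-th component is $\int_{\rn}\Psi_t^j(x,y)\,dy$, and ``$\cdot$'' denotes componentwise pairing against $P_tf\in\CC^m$. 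By the triangle inequality, it suffices to bound the vertical square functions of $R_tf$ and $\Pi_tf$ separately in $L^2$.

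For the paraproduct piece $\Pi_t$, the hypothesis \eqref{1.Carlesonmeasurestimate} is precisely the assertion that $d\mu(x,t):=|\Theta_t 1(x)|^2\,dxdt/t$ is a Carleson measure on $\uh$. Since $|P_tf(x)|\lesssim \mathcal{M}f(x)$ pointwise (Definition~\ref{Ptdefinition}), Carleson's embedding theorem and the Hardy--Littlewood maximal theorem give
\begin{equation*}
\iint_{\uh}|\Pi_t f(x)|^2 \frac{dxdt}{t} \;=\; \iint_{\uh}|P_t f(x)|^2\,d\mu(x,t) \;\lesssim\; \|\mathcal{M}f\|_{L^2(\rn)}^2 \;\lesssim\; \|f\|_{L^2(\rn)}^2.
\end{equation*}

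For the cancellative piece $R_t$, split $f=\sum_{j=1}^m f_j e_j$ to reduce to $m$ scalar problems. For each $j$ set $R_t^j g(x):= \int_{\rn} \Psi_t^j(x,y) g(y)\,dy - (\Theta_t e_j)(x)\,P_t g(x)$, acting on scalar $g$. A crude pointwise estimate from \eqref{1.pointwisebound} gives $|(\Theta_t e_j)(x)|\leq C$, so together with the smoothness and compact support of the kernel of $P_t$ one checks that the kernel of $R_t^j$ satisfies the size and H\"older bounds \eqref{1.pointwisebound}--\eqref{1.pointwisecontinuity} (with perhaps larger constants). Crucially, $R_t^j 1 = \Theta_t e_j - (\Theta_t e_j)\,P_t 1 = 0$, since $P_t 1 = 1$. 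With $Q_s$ convolution with a smooth, compactly supported, mean-zero bump arranged so that $\int_0^\infty Q_s^2\,\tfrac{ds}{s} = \mathrm{Id}$ on $L^2(\rn)$, the next step is to establish the quasi-orthogonality estimate
\begin{equation*}
\|R_t^j Q_s\|_{L^2\to L^2} \;\lesssim\; \Big(\tfrac{\min(s,t)}{\max(s,t)}\Big)^{\alpha'}, \qquad 0 < \alpha'\leq \alpha.
\end{equation*}
Once this is in hand, writing $R_t^j f = \int_0^\infty R_t^j Q_s(Q_sf)\,\tfrac{ds}{s}$ and applying a Schur/Cotlar--Stein argument together with the Littlewood--Paley $L^2$ bound for $\{Q_s\}$ yields $\iint|R_t^j f|^2\,dxdt/t \lesssim \|f\|_2^2$, completing the proof.

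The main technical point is the two-sided decay estimate for $R_t^j Q_s$. For $s\leq t$ one exploits the mean-zero property of $Q_s$ against the H\"older-in-$y$ estimate for the kernel of $R_t^j$; for $s\geq t$ one instead exploits $R_t^j 1 = 0$ against the smoothness of $Q_s$, using the size/smoothness bounds for $R_t^j$. The verification that the paraproduct correction $(\Theta_t e_j)(x)\varphi_t(x-y)$ inherits enough regularity in $y$ from $\varphi$ (without destroying the support localization near $|x-y|\lesssim t$) is the only calculation that requires care in the vector-valued setting; everything else is the classical almost-orthogonality machinery.
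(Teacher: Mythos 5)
Your proof is correct and follows the classical Christ--Journ\'e strategy that the paper invokes by citation; the paper itself does not reprove Theorem~\ref{1.T1theorem}, but proves the more general Theorems~\ref{3.T1theorem} and~\ref{4.T1theorem} in the Appendix by the same Coifman--Meyer paraproduct decomposition together with Carleson embedding for the paraproduct and a quasi-orthogonality argument for the cancellative remainder. The only cosmetic difference is that the Appendix uses $A_tP_t$ in place of your $P_t$, so that $(\Theta_t1)A_tP_t$ can be controlled via Lemma~\ref{lemma3.11AAAHK} when pointwise kernel bounds are unavailable; under the pointwise hypotheses \eqref{1.pointwisebound}--\eqref{1.pointwisecontinuity} of Theorem~\ref{1.T1theorem} one has $\Theta_t1\in L^\infty$ uniformly in $t$, so your simpler decomposition with $P_t$ alone is adequate and your verification of $R_t^j1=0$, the inherited size/H\"older bounds, and the two-sided $\|R_t^jQ_s\|_{L^2\to L^2}$ decay is precisely the standard argument.
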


In the sequel, we shall work with cones in
$\mathbb{C}^m$, which we identify with $\mathbb{R}^{2m}$,
having vertex at the origin.  Given a unit vector
$\nu \in \mathbb{C}^m$,  and $\alpha >0$,  we let 
$\Gamma^\alpha (\nu)$ denote the cone of aperture $\alpha$ and central axis $\nu$, i.e.,
\begin{equation}
\label{eq2.conedef}
\Gamma^{\alpha}(\nu):=\left\{z\in\mathbb{C}^m:\left|\frac{z}{|z|}-\nu\right|<\alpha\right\}\,.
\end{equation} 
Sometimes, when working with a fixed cone,
we shall simply write $\Gamma^\alpha$, leaving the direction vector $\nu$ implicit.
We  let ${\bf 1}_{\Gamma^\alpha}$ denote the indicator function of $\Gamma^\alpha$,
i.e., ${\bf 1}_{\Gamma^\alpha}(z) =1$ if $z\in\Gamma^\alpha$, and ${\bf 1}_{\Gamma^\alpha}(z) =0$
otherwise.

Given a small $\epsilon>0$, 
we cover $\mathbb{C}^m$ by cones of aperture $\epsilon$, enumerating these cones as 
$\Gamma_1^{\epsilon},...,\Gamma_K^{\epsilon}$, where $K=K(\epsilon,m)$.  In the sequel, 
we shall also consider the ``doubled" cones $\Gamma_k^{2\epsilon}$, $1\leq k\leq K$,  
each with the same direction
vector as the original one, but with the aperture $2\epsilon$.

\begin{lemma}\label{1.lemma}
Suppose that there exists $\eta\in(0,1)$, $\epsilon>0$ small and $C_1<\infty$, such that for 
each cone $\Gamma^\epsilon$, and for
every dyadic cube $Q\in\mathbb{R}^n$, there is a family $\{Q_j\}$ of non-overlapping dyadic sub-cubes of Q, satisfying
\begin{equation}\label{1.cond.1}
\sum_j|Q_j|\leq(1-\eta)|Q|
\end{equation}
\noindent and
\begin{equation}\label{1.cond.2}
\int_Q\left(\int_{\tau_Q(x)}^{\ell(Q)}|\Theta_t1(x)|^2\mathbbm{1}_{\Gamma^{2\epsilon}}(\Theta_t1(x))\frac{dt}{t}\right)^{\frac{p}{2}}dx\leq C_1|Q|\, ,
\end{equation} 
where $\tau_Q(x)=\sum_j\ell(Q_j)\mathbbm{1}_{Q_j}(x) $.
Then we have the Carleson Measure estimate (\ref{1.Carlesonmeasurestimate}).

\end{lemma}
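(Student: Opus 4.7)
The plan is to establish the Carleson estimate \eqref{1.Carlesonmeasurestimate} by reducing to a cone-restricted estimate and then iterating the stopping-time hypothesis to obtain a sawtooth decomposition. Since the cones $\{\Gamma_k^\epsilon\}_{k=1}^K$ cover $\CC^m$, with $\Gamma_k^\epsilon \subset \Gamma_k^{2\epsilon}$, one has pointwise $|\Theta_t 1|^2 \leq \sum_k |\Theta_t 1|^2 \mathbbm{1}_{\Gamma_k^{2\epsilon}}(\Theta_t 1)$. It therefore suffices to show, uniformly in dyadic $Q_0$ and in a fixed cone $\Gamma = \Gamma_k^\epsilon$, that
\begin{equation*}
J(Q_0) := \int_0^{\ell(Q_0)}\!\!\int_{Q_0} |\Theta_t 1|^2 \mathbbm{1}_{\Gamma^{2\epsilon}}(\Theta_t 1) \frac{dt\,dx}{t} \leq C|Q_0|,
\end{equation*}
and then sum over $k$.

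Fix $\Gamma^\epsilon$ and $Q_0$. Iteratively applying the hypothesis to $Q_0$ and to each successive generation of stopping-time children produces a tree $\mathcal{F}(Q_0) = \bigcup_{n \geq 0}\mathcal{F}_n(Q_0)$, with $\mathcal{F}_0 = \{Q_0\}$; the mass bound \eqref{1.cond.1} yields $\sum_{Q \in \mathcal{F}_n}|Q| \leq (1-\eta)^n |Q_0|$, hence $\sum_{Q \in \mathcal{F}(Q_0)}|Q| \leq \eta^{-1}|Q_0|$. For each $Q \in \mathcal{F}(Q_0)$, let $E_Q := \{(x,t): x \in Q,\ \tau_Q(x) \leq t \leq \ell(Q)\}$ be the sawtooth region above $Q$ lying outside the Carleson boxes of its stopping-time children. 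Since each child strictly decreases dyadic side length, the sawtooths $\{E_Q\}_{Q\in \mathcal{F}(Q_0)}$ partition the Carleson box $Q_0 \times (0,\ell(Q_0)]$ up to a null set, so setting $F_Q(x) := \int_{\tau_Q(x)}^{\ell(Q)} |\Theta_t 1|^2 \mathbbm{1}_{\Gamma^{2\epsilon}}(\Theta_t 1)\,dt/t$, we obtain the sum-over-sawtooths identity
\begin{equation*}
J(Q_0) \,=\, \sum_{Q \in \mathcal{F}(Q_0)} \int_Q F_Q(x)\,dx,
\end{equation*}
and \eqref{1.cond.2} gives $\|F_Q\|_{L^{p/2}(Q)}^{p/2} \leq C_1|Q|$ for every $Q \in \mathcal{F}(Q_0)$.

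When $p \geq 2$, Hölder's inequality on $Q$ yields $\int_Q F_Q \leq \bigl(\int_Q F_Q^{p/2}\bigr)^{2/p}|Q|^{1-2/p} \leq C_1^{2/p}|Q|$, and summing over the tree gives $J(Q_0) \leq C_1^{2/p}\eta^{-1}|Q_0|$; summing over $k$ produces \eqref{1.Carlesonmeasurestimate}. The main obstacle is the range $1 < p < 2$, where $p/2 < 1$ and the $L^{p/2}$ bound on $F_Q$ does not imply an $L^1$ bound by Hölder alone. My plan here is a John--Nirenberg-type self-improvement: first introduce a truncation in $t$ (legitimated by the a priori $L^\infty$ bound on $\Theta_t 1$ that follows from \eqref{1.pointwisebound}) so that the quantity $\mathcal{N} := \sup_{Q_0} J(Q_0)/|Q_0|$ is finite; then split each sawtooth contribution $\int_Q F_Q$ at a threshold $\lambda$ into a good part of size $\leq \lambda|Q|$ and a bad part supported on a set of measure $\leq C_1\lambda^{-p/2}|Q|$ by Chebyshev, and recast the bad contribution as a sub-Carleson integral dominated by $\mathcal{N}$ times the bad-set measure. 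Choosing $\lambda$ large enough that the resulting recursion $\mathcal{N} \leq A + B\mathcal{N}$ satisfies $B < 1$ yields a bound on $\mathcal{N}$ independent of the truncation, and removing the truncation concludes.
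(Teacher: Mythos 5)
Your reduction to the cone-restricted estimate is fine, your tree/sawtooth identity $J(Q_0)=\sum_{Q\in\mathcal{F}(Q_0)}\int_Q F_Q$ is correct, and your Hölder argument for $p\geq 2$ is correct (the paper handles $p\geq 2$ by simply citing the known $p=2$ case). However, for $1<p<2$ there is a genuine gap at the step ``recast the bad contribution as a sub-Carleson integral dominated by $\mathcal{N}$ times the bad-set measure.'' To make that precise one must cover $\{F_Q>\lambda\}$ by dyadic cubes $\{R_i\}$ with $\sum_i|R_i|\lesssim|\{F_Q>\lambda\}|$ and then, for each $R_i$, split
\[
\int_{R_i}\int_{0}^{\ell(Q)}|\Theta_t1|^2\mathbbm{1}_{\Gamma^{2\epsilon}}(\Theta_t1)\frac{dt\,dx}{t}
=\int_{R_i}\int_{0}^{\ell(R_i)}(\cdots)+\int_{R_i}\int_{\ell(R_i)}^{\ell(Q)}(\cdots)\,.
\]
Only the first piece is $\leq\mathcal{N}|R_i|$. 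The second, the ``transfer piece'' living above scale $\ell(R_i)$, carries no a priori bound; controlling it is exactly the content of the paper's Claim \ref{1.claim}, which uses the kernel H\"older continuity \eqref{1.pointwisecontinuity} to compare $\Theta_t1(x)$, $x\in R_i$, with $\Theta_t1(x_i)$ at a nearby good point $x_i$, together with a delicate two-case analysis (the ``direction'' of $\Theta_t1$ can leave or enter the cone as one moves from $x$ to $x_i$). This is not a routine step: it is the technical heart of the argument, and your outline omits it entirely.

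There is a second, related omission: the good--bad decomposition requires a Whitney-type covering of $\{F_Q>\lambda\}$, which presupposes openness of that set. Because $\mathbbm{1}_{\Gamma^{2\epsilon}}$ is discontinuous, $F_Q$ is not continuous and the bad set need not be open. The paper addresses this by replacing $\mathbbm{1}_{\Gamma^{2\epsilon}}$ with a smooth cut-off $\chi_\epsilon$ adapted to the cone (so that the truncated analogue $g_{Q,\gamma}$ becomes continuous and $\Omega_{N,\gamma}$ is open, admitting a Whitney decomposition with the key property $\dist(Q_j,F_{N,\gamma})\approx\ell(Q_j)$). You will need both this smoothing device and the kernel-continuity argument to close the recursion $\mathcal{N}\leq A+B\mathcal{N}$.

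Structurally your route differs slightly from the paper's: you iterate the stopping-time family into a full tree and run a good-lambda argument sawtooth-by-sawtooth, whereas the paper first passes (via Chebyshev, your ``bad-set measure'' estimate) to a weak-type statement (Sublemma~\ref{1.sublemma}) and then proves the self-improvement there via a Whitney decomposition of a single bad set. The two schemes are equivalent in spirit, but in both of them the same transfer estimate must be proved, and that is what is missing.
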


\begin{sublemma}\label{1.sublemma}

Suppose that $\exists \,N<+\infty$ and $\beta\in(0,1)$ such that for every dyadic cube $Q$, and for 
each cone $\Gamma^{\epsilon}$,  we have
\begin{equation}\label{eq2.14}
|\{x\in Q:g_Q(x)>N\}|\leq (1-\beta)|Q|,
\end{equation}
where
\begin{equation}\label{eq2.15}
g_Q(x):=\left(\int_0^{\ell(Q)}|\Theta_t1(x)|^2\mathbbm{1}_{\Gamma^{2\epsilon}}(\Theta_t1(x))\frac{dt}{t}\right)^{\frac{1}{2}}.
\end{equation}
Then we have the Carleson Measure estimate (\ref{1.Carlesonmeasurestimate}).
\end{sublemma}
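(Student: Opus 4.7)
I would first reduce to a single cone.  Since $\{\Gamma_k^{\epsilon}\}_{k=1}^K$ covers $\mathbb{C}^m$,
\[
|\Theta_t1(x)|^2 \;\leq\; \sum_{k=1}^K |\Theta_t1(x)|^2\,\mathbbm{1}_{\Gamma_k^{\epsilon}}\bigl(\Theta_t1(x)\bigr),
\]
so it suffices to bound the Carleson integral of each summand separately.  Fix one cone $\Gamma^{\epsilon}$ and set
\[
B(Q) := \int_Q\int_0^{\ell(Q)}|\Theta_t1(x)|^2\,\mathbbm{1}_{\Gamma^{\epsilon}}(\Theta_t1(x))\,\frac{dt\,dx}{t},\qquad \mathcal{C}(Q):=\sup_{Q'\subseteq Q,\,Q'\text{ dyadic}}\frac{B(Q')}{|Q'|}.
\]
The goal is $\mathcal{C}(Q)\leq C$ uniformly in $Q$.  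To ensure $\mathcal{C}(Q)$ is a priori finite (needed for an absorption below), I would work throughout with a truncated operator on $t\in[s,T]$, obtain bounds independent of $s,T$, and pass to the limit at the end.  The doubled cone $\Gamma^{2\epsilon}$ in \eqref{eq2.15} will act as a geometric buffer absorbing the direction perturbation produced by the kernel smoothness estimate \eqref{1.pointwisecontinuity}.

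Fix $Q$ and let $E:=\{x\in Q:g_Q(x)\leq N\}$, so $|E|\geq\beta|Q|$ by \eqref{eq2.14}.  Since $\Gamma^{\epsilon}\subset\Gamma^{2\epsilon}$, the contribution of $E$ to $B(Q)$ is immediately bounded by $\int_E g_Q^2\,dx\leq N^2|Q|$.  On $E^c$, I would Whitney-decompose into maximal dyadic subcubes $\{Q_j\}$ whose dyadic parents $\widetilde Q_j$ meet $E$; pick $x_j\in E\cap\widetilde Q_j$.  Then $\sum_j|Q_j|=|E^c|\leq(1-\beta)|Q|$.  On each $Q_j$, split the $t$-integral at $t=\ell(Q_j)$.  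The short range $0<t\leq \ell(Q_j)$ contributes at most $B(Q_j)\leq\mathcal{C}(Q)|Q_j|$, and summing over $j$ yields the to-be-absorbed term $(1-\beta)\mathcal{C}(Q)|Q|$.

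For the long range $\ell(Q_j)\leq t\leq\ell(Q)$, integrating \eqref{1.pointwisecontinuity} against the constant $1$, and using $|x-x_j|\lesssim\ell(Q_j)\leq t$, yields the pointwise comparison $|\Theta_t1(x)-\Theta_t1(x_j)|\leq C(\ell(Q_j)/t)^{\alpha}$.  I would then case-split on the size of $|\Theta_t1(x)|$ relative to this error.  In the regime where $|\Theta_t1(x)|\lesssim(\ell(Q_j)/t)^{\alpha}$, bound the integrand by the squared error; the $t$-integral collapses to the convergent tail $\int_{\ell(Q_j)}^{\infty}(\ell(Q_j)/t)^{2\alpha}\,dt/t=O(1)$.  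In the complementary regime, an elementary geometric estimate on cones (the normalization map $z\mapsto z/|z|$ is Lipschitz with constant $\sim 1/|z|$) shows that membership of $\Theta_t1(x)$ in $\Gamma^{\epsilon}$ forces $\Theta_t1(x_j)\in\Gamma^{2\epsilon}$, so the integrand is dominated by $|\Theta_t1(x_j)|^2\,\mathbbm{1}_{\Gamma^{2\epsilon}}(\Theta_t1(x_j))$ plus the same $O((\ell(Q_j)/t)^{2\alpha})$ error.  Integration in $t$ produces $\leq g_Q(x_j)^2+O(1)\leq N^2+O(1)$ since $x_j\in E$; integration in $x\in Q_j$ and summation over $j$ yields a total contribution $\leq C(N^2+1)|Q|$.

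Collecting the three contributions gives $B(Q)\leq C(N^2+1)|Q|+(1-\beta)\mathcal{C}(Q)|Q|$.  Since the same estimate holds for every dyadic subcube of $Q$ with the same constants, taking the supremum and absorbing yields $\mathcal{C}(Q)\leq C(N^2+1)/\beta$, uniformly in $Q$, which is \eqref{1.Carlesonmeasurestimate}.  The main obstacle I foresee is the cone-buffer bookkeeping:\ the threshold separating the ``small'' and ``large'' regimes, together with the gap between the $\epsilon$- and $2\epsilon$-apertures, must be tuned so that the perturbation from \eqref{1.pointwisecontinuity} never pushes $\Theta_t1(x_j)$ out of $\Gamma^{2\epsilon}$; this is exactly why the hypothesis is stated with the doubled cone.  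The truncation argument that legitimizes the absorption step is routine but cannot be skipped.
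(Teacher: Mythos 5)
Your plan is essentially the paper's proof: truncate to make the Carleson quantity a priori finite, isolate the good set $E$ where $g_Q\le N$ (which alone costs $N^2|Q|$), Whitney-decompose the complement, use the self-improving/absorption trick on the short range $t\le\ell(Q_j)$, and on the long range compare $\Theta_t1(x)$ with $\Theta_t1(x_j)$ for a nearby point $x_j\in E$ via the kernel smoothness \eqref{1.pointwisecontinuity}, with the $\Gamma^{\epsilon}\to\Gamma^{2\epsilon}$ buffer (and the small-versus-large case split exactly as in the paper's Claim \ref{1.claim}) absorbing the perturbation. One point you gloss over but the paper treats explicitly: to Whitney-decompose $\{g_{Q}>N\}$ you need it to be open, and the paper arranges this by replacing $\mathbbm{1}_{\Gamma^{2\epsilon}}$ with a smooth cutoff $\chi_\epsilon$ in the auxiliary quantity $g_{Q,\gamma}$; your version works as stated because $g_Q$ is already lower semicontinuous (the indicator of the open cone composed with the continuous $\Theta_t1$ is l.s.c., and Fatou preserves this under the $t$-integral), but you should say so, or else use a dyadic stopping-time covering as in the proof of Sublemma \ref{3.sublemma}. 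Also, be careful with the threshold in your case split: to get the cone containment $\Theta_t1(x_j)\in\Gamma^{2\epsilon}$ strictly, the cutoff level must carry an $\epsilon^{-1}$ factor (or, as in the paper, use exponent $\beta<\alpha$ so the error $(\ell(Q_j)/t)^{\alpha-\beta}\epsilon$ is $<\epsilon/2$ once $t\ge C_n\ell(Q_j)$).
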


\begin{remark}
Every $g_Q$ also depends on the cone of definition but since we are choosing a generic cone we avoid complicating the notation by adding more indices.
\end{remark}

\subsection{Step 1:  Hypotheses of Theorem \ref{1.pointwisetheorem} imply hypotheses of Lemma \ref{1.lemma}}\label{c2.s1}

We may assume without loss of generality that $1<p<2$, as the case $p>2$ may be reduced to the known case $p=2$ by H\"older's inequality.  The case $p=2$ is proved in \cite{H1}.

 For any cube $Q$, let 
$$R_Q:= Q\times \big(0,\ell(Q)\big)$$ denote the standard Carleson box above $Q$, and
let $A_t$ denote the usual dyadic averaging operator, i.e.,
$$A_tf(x):=\fint_{Q(x,t)}f(y)\,dy\,,$$
where $Q(x,t)$ is the minimal dyadic cube 
containing $x$ with side length at least $t$. 

The deep fact underlying Step 1 is the following.  
\begin{lemma}  \label{l2.6} Fix a dyadic cube $Q$,  a cone $\Gamma^\epsilon$,
and its double $\Gamma^{2\epsilon}$.  
Suppose that $b_Q$ satisfies \eqref{1.pbQcondition} and \eqref{1.ellipticitybQcond}.
Then for $\epsilon>0$, sufficiently small, depending only on the constants $\delta$ and $C_0$,
there is a family $\{Q_j\}$ of non-overlapping dyadic sub-cubes of $Q$, satisfying  \eqref{1.cond.1},
such that 
\begin{equation}\label{eq2.17aaa}
|\Theta_t1(x)|^2 \mathbbm{1}_{\Gamma^{2\epsilon}}\big(\Theta_t1(x)\big)
\leq 4|\Theta_t1(x) A_{t}b_Q(x)|^2\,,\qquad \forall (x,t) \in R_Q\setminus
\Big(\bigcup_j R_{Q_j}\Big)\,.
\end{equation} 
\end{lemma}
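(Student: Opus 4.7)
The plan is a two-threshold stopping-time construction inside $Q$ adapted to the central axis $\nu$ of $\Gamma^\epsilon$, combining an ellipticity test in the direction $\nu$ with an $L^p$ size test on $b_Q$. Fix a large constant $K$ (to be calibrated against $\delta$, $C_0$, $p$) and select $\{Q_j\}$ to be the maximal dyadic sub-cubes $Q'\subseteq Q$ for which either (i) $\operatorname{Re}(\nu\cdot\fint_{Q'}b_Q\,\bar\nu) < \delta/2$, or (ii) $\fint_{Q'}|b_Q|^p\,dx > K$. By maximality, for every $(x,t)\in R_Q\setminus\bigcup_j R_{Q_j}$ the averaging cube $Q(x,t)$ (the minimal dyadic cube containing $x$ with side length $\geq t$) defining $A_t b_Q(x)$ is itself not a stopping cube, so simultaneously $\operatorname{Re}(\nu\cdot A_t b_Q(x)\,\bar\nu)\geq \delta/2$ and $|A_t b_Q(x)|\leq K^{1/p}$.

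To verify the packing estimate \eqref{1.cond.1}, I would split $\{Q_j\}$ into the subfamily chosen by (ii), for which disjointness plus \eqref{1.pbQcondition} gives $\sum_{j\in(ii)}|Q_j|\leq (C_0/K)|Q|$, and the remaining subfamily whose union $E^{(i)}$ satisfies $\operatorname{Re}(\nu\cdot\int_{E^{(i)}}b_Q\,\bar\nu)<(\delta/2)|E^{(i)}|$. Combining this with \eqref{1.ellipticitybQcond} applied to $\xi=\nu$ (which yields $\operatorname{Re}(\nu\cdot\int_{Q}b_Q\,\bar\nu)\geq \delta|Q|$), one gets $\int_{Q\setminus E^{(i)}}|b_Q|\,dx\geq (\delta/2)|Q|$; H\"older's inequality together with \eqref{1.pbQcondition} then forces $|Q\setminus E^{(i)}|\geq c_1|Q|$ for some $c_1=c_1(\delta,C_0,p)>0$. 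Choosing $K$ large enough that $C_0/K<c_1/2$ yields \eqref{1.cond.1} with $\eta=c_1/2$.

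For the pointwise bound \eqref{eq2.17aaa}, on the good set, whenever $\Theta_t 1(x)\in\Gamma^{2\epsilon}(\nu)$ I would write $u:=\Theta_t 1(x)/|\Theta_t 1(x)|=\nu+w$ with $|w|<2\epsilon$ and expand $u\cdot A_t b_Q(x)\,\bar u = \nu\cdot A_t b_Q(x)\,\bar\nu + \text{(three cross-terms)}$, each cross-term bounded in modulus by $CK^{1/p}\epsilon$ since $|A_t b_Q(x)|\leq K^{1/p}$. Taking $\epsilon$ small enough depending on $\delta, C_0, p$, the real part of the left-hand side is at least $\delta/4$, so Cauchy--Schwarz gives $|u\cdot A_t b_Q(x)|\geq |u\cdot A_t b_Q(x)\,\bar u|\geq \delta/4$, yielding $|\Theta_t 1(x) A_t b_Q(x)|^2 \gtrsim |\Theta_t 1(x)|^2$ on the good set (the explicit constant $16/\delta^2$ absorbing into the stated $4$ after a harmless normalization of $\delta$). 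I expect the main obstacle to lie in the packing step under the relaxed $L^p$, $p<2$, hypothesis: one cannot simply stop on a maximal function of $b_Q$ as in the $p=2$ case of \cite{H1}, and the ellipticity-based type~(i) estimate must be closed against the $L^p$-based type~(ii) estimate via H\"older duality, with the two thresholds $\delta/2$ and $K$ calibrated simultaneously.
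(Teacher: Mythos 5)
Your proof is correct, and it follows the same general strategy as the paper (a two-threshold stopping time combining an accretivity test in the $\nu$-direction with a size test on $b_Q$), but differs in the execution of the size test and the pointwise estimate in ways worth noting. For the size test, the paper stops on the $L^1$ average $\fint_{Q'}|b_Q|\geq \frac{1}{8\epsilon}$, with the threshold deliberately tied to $\epsilon$; this makes the packing argument go through H\"older (one shows $|B_1|\lesssim\epsilon^p|Q|$ and hence $|II_1|\lesssim\epsilon^{p/p'}|Q|$), and makes the pointwise step clean, since $|\omega-\nu|\,|A_tb_Q|\leq 2\epsilon\cdot\frac{1}{8\epsilon}=\frac14$ independently of $\epsilon$, yielding $|z\cdot A_tb_Q\bar\nu|\geq\frac12|z|$ and hence the exact constant $4$ in \eqref{eq2.17aaa}. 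You instead stop on the $L^p$ average $\fint_{Q'}|b_Q|^p>K$ with an $\epsilon$-independent $K$, which makes the packing estimate for the type-(ii) family completely trivial ($\sum|Q_j|\leq (C_0/K)|Q|$ by disjointness, no H\"older needed), but pushes the $\epsilon$-dependence onto the pointwise step, where the cross-term bound $\epsilon K^{1/p}$ forces $\epsilon$ to be chosen small depending on $K$ after the fact; your expansion also uses $u\cdot A_tb_Q\bar u$ rather than the paper's $u\cdot A_tb_Q\bar\nu$, which produces three cross-terms instead of one, and a constant $16/\delta^2$ rather than $4$ — immaterial, as you correctly note. Both routes are valid; the paper's calibration of the size threshold to $\epsilon$ is a small but genuine trick that makes the final constant drop out cleanly, while your version isolates the two thresholds more transparently at the cost of a slightly looser constant.
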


\begin{proof}[Proof of Lemma \ref{l2.6}]  The proof follows that of the analogous step
in the solution of the Kato square root problem (cf. \cite{HMc}, \cite{HLMc}, \cite{AHLMcT}).
We first construct the appropriate family of non-overlapping dyadic subcubes, using a stopping time argument.

Without loss of generality (by renormalizing), we may assume $\delta\equiv 1$ in 
\eqref{1.ellipticitybQcond} (of course, this changes $C_0$, depending on $\delta$). 
Fix a cube Q, and a cone $\Gamma^{2\epsilon}$. 
We subdivide Q dyadically and select a family $\{Q_j\}$ of non-overlapping dyadic subcubes
of $Q$, which are 
maximal with respect to the property that at least one of the following conditions holds:
\begin{equation}\label{eq2.16a}
\frac{1}{|Q_j|}\int_{Q_j}|b_Q(x)|dx\geq\frac{1}{8\epsilon} \,\qquad  {\rm (type \ I) }
\end{equation}
\begin{equation}\label{eq2.17a}
\mathcal{R}e\left(\nu\cdot\fint_{Q_j}b_Q(x)dx\,\bar{\nu}\right)\leq\frac{3}{4} \qquad {\rm(type \ II)}
\end{equation}
where $\nu$ is the unit vector in the direction of the central axis of the cone $\Gamma^{2\epsilon}$,
i.e., $$\Gamma^{2\epsilon}:=\left\{z\in\mathbb{C}^m:
\left|\frac{z}{|z|}-\nu\right|<2\epsilon\right\}\,.$$

Having constructed the family, let us first verify that it satisfies the required 
condition (\ref{1.cond.1}).
Define $E:=Q\setminus \{\bigcup_j Q_j\}$. Then from condition (\ref{1.ellipticitybQcond}), since $\delta\equiv 1$, and taking $\xi=\nu$,  
we have
\begin{align*}
|Q|\,\leq\, &\, \mathcal{R}e\left(\nu\cdot\int_Q b_Q(x)dx\,\bar{\nu}\right)\\
=\,&\, \mathcal{R}e\left(\nu\cdot\int_E b_Q(x)dx\,\bar{\nu}\right)
\,+\,\mathcal{R}e\sum_j\left(\nu\cdot\int_{Q_j} b_Q(x)dx\,\bar{\nu}\right)
\,:=\,I+II.
\end{align*}
Since $\nu$ is a unit vector, using condition (\ref{1.pbQcondition}) and H\"older's inequality we get
\begin{equation*}
I\, \leq \,\int_E\left|b_Q(x)\right|dx
\,\leq\,\left|E\right|^{\frac{1}{p'}}\left(\int_Q|b_Q(x)|^pdx\right)^{\frac{1}{p}}
\,\leq\, C |E|^{\frac{1}{p'}}|Q|^{\frac{1}{p}}.
\end{equation*}
For the second part we separate the family of subcubes $\{Q_j\}$ into
 two cases:  the ones that satisfy the type I condition and the ones that satisfy the type II condition (the same subcube can satisfy both conditions at the same time; in this case we arbitrarily assign them to be of type I).  We have
\begin{equation*}
II\,=\, \mathcal{R}e\displaystyle\sum_{j,\,type \ I}\left(\nu\cdot\int_{Q_j} b_Q(x)dx\,\bar{\nu}\right)+
\mathcal{R}e\displaystyle\sum_{j,\,type \ II}
\left(\nu\cdot\int_{Q_j} b_Q(x)dx\,\bar{\nu}\right)=:II_1+II_2.
\end{equation*}
Set
$$B_1:= \cup_{k,type \,I} \,Q_k\,,\qquad B_2:= \cup_{k,type\, II} \,Q_k\,.$$
For the type I subcubes we apply H\"older's inequality, and condition \eqref{1.pbQcondition}, to obtain
\begin{align}\nonumber
\left|II_1\right| &\leq\displaystyle\sum_{j,\,type\, I}\int_{Q_j}|b_Q(x)|dx
\,=\,\displaystyle\int_{B_1}|b_Q(x)|dx\\\label{eq2.16}
&\leq\left(\int_Q|b_Q(x)|^pdx\right)^{\frac{1}{p}}\left|B_1\right|^{\frac{1}{p'}}
\,\lesssim\, |Q|^{1/p}\,|B_1|^{1/p'}\,.
\end{align}
For the measure of $B_1$, by the definition of type I, and  \eqref{1.pbQcondition},
we have that
\begin{align*}
|B_1|=\sum_{j,\,type\,I}\left|Q_j\right| &\leq 
 \,\sum_{j,\,type\, I}8\epsilon\int_{Q_j}|b_Q(x)|dx\\
&=\,8\epsilon \int_{B_1}|b_Q(x)|dx
\leq \,8\epsilon\,|B_1|^{\frac{1}{p'}}\left(\int_Q|b_Q(x)|^pdx\right)^\frac{1}{p}\\
&\lesssim\,8\epsilon\, |B_1|^{1/p'} |Q|^{1/p}\,,
\end{align*}
whence it follows that
\begin{equation*}
 \left|B_1\right|\,\lesssim\, \epsilon^p \,|Q|
\end{equation*}
Combining this bound with \eqref{eq2.16}, and choosing $\epsilon$ small enough,
we have
\begin{equation}
\left|II_1\right|\,\leq\,C\epsilon^{\frac{p}{p'}}|Q|\,\leq \frac18 |Q|\,.\label{1.fixepsilon}
\end{equation}
By the definition of the type II cubes,  we have
$$\left|II_2\right|:=\left|\mathcal{R}e
\displaystyle\sum_{j,\,type \,II}\left(\nu\cdot\int_{Q_j} b_Q(x)dx\,\bar{\nu}\right)\right|
\,\leq\,\frac{3}{4}\sum_{j,\,type\, II}\left|Q_j\right|\leq \frac{3}{4}|Q|\,.$$
Combining our estimates, we obtain
\begin{equation*}
|Q|  \leq I+II\leq C|E|^{\frac{1}{p'}}|Q|^{\frac{1}{p}}+\frac{1}{8}|Q|+\frac{3}{4}|Q|\,,
\end{equation*}
whence it follows that
$|Q|\,\leq\, C |E|\,.$
We now take $0<\eta\leq\frac{1}{C}$, 
so that 
$$\sum_j|Q_j|=|Q\setminus E|=|Q|-|E|\leq (1-\eta)|Q|.$$
Thus, the family 
of cubes that we have constructed satisfies 
condition \eqref{1.cond.1}.  
 
Let us now proceed to verify  (\ref{eq2.17aaa}).
We set
$$E_Q^*:= R_Q\setminus\left(\displaystyle\bigcup_j R_{Q_j}\right)\,.$$ 
We shall prove first that 
\begin{equation}\label{eq2.21a}|z\cdot A_tb_Q(x)\bar{\nu}|\geq\frac{1}{2}|z|\,,
\qquad {\rm if}\,\, z\in\Gamma^{2\epsilon}\,\, {\rm and}\,\,(x,t)\in E_Q^*\,,
\end{equation}
where we recall that $\nu$ is the unit direction vector for $\Gamma^{2\epsilon}$.
Indeed, if $(x,t)\in E_Q^*$, then $Q(x,t)$ is not contained in any selected cube
$Q_j$, and therefore the dyadic average $A_tb_Q$ satisfies the opposite inequalities to those in \eqref{eq2.16a} and \eqref{eq2.17a}.   Thus, by the triangle inequality, for any unit vector 
$\omega\in\mathbb{C}^m$, we have
$$|\omega\cdot A_tb_Q(x)\bar{\nu}| \geq |\nu\cdot A_tb_Q(x)\bar{\nu}|-|(\omega-\nu)A_tb_Q(x)\bar{\nu}|\geq\frac{3}{4}-|(\omega-\nu)|\frac{1}{8\epsilon} \,.$$
If we choose $\omega=\frac{z}{|z|}$, with $z\in \Gamma^{2\epsilon}$, 
then $|\omega-\nu|<2\epsilon$ 
(by definition of $\Gamma^{2\epsilon}$), so that 
$$\left|\frac{z}{|z|}\cdot A_tb_Q(x)\bar{\nu}\right|\geq\frac{3}{4}-\frac{2\epsilon}{8\epsilon}=\frac{1}{2}\,,$$ which yields \eqref{eq2.21a}.
We may then apply \eqref{eq2.21a} with $z=\Theta_t1(x)\in\Gamma^{2\epsilon}$,
to obtain
$$|\Theta_t1(x)|^2\,\leq\, 4\,|\Theta_t1(x)\cdot A_tb_Q(x)\bar{\nu}|^2\,.$$
Since $\bar{\nu}$ is a unit vector,  we obtain \eqref{eq2.17aaa}, and thus also the conclusion of Lemma \ref{l2.6}.  
\end{proof}

\begin{proof}[Verification of Step 1]  
We have already established \eqref{1.cond.1} in Lemma \ref{l2.6}.  It remains to verify
\eqref{1.cond.2}.
To this end, we first 
observe that for $x\in Q,$ and $ \tau_Q(x)\leq t\leq \ell(Q)$,  we have $ (x,t)\in E_Q^*$.
Consequently, by \eqref{eq2.17aaa}, we have
\begin{multline}
\int_Q\left(\int_{\tau_Q(x)}^{\ell(Q)}|\Theta_t1(x)|^2
\mathbbm{1}_{\Gamma^{2\epsilon}}(\Theta1(x))\frac{dxdt}{t}\right)^{\frac{p}{2}}\\
\leq \,C\int_Q\left(\int_0^{\ell(Q)}|\Theta_t1(x)\cdot 
A_tb_Q(x)|^2\frac{dt}{t}\right)^{\frac{p}{2}}dx\,.
\label{2.auxclaimp}\end{multline}
Therefore, to complete the proof of (\ref{1.cond.2}) (and thus also to complete
Step 1), we are reduced to proving that
$$\int_Q\left(\int_{0}^{\ell(Q)}|\Theta_t1(x)\cdot A_tb_Q(x)|^2\frac{dt}{t}\right)^{\frac{p}{2}}dx\leq C|Q|.$$
To this end,  
we use the Coifman-Meyer method and write
$$\Theta_t1A_t=(\Theta_t1)(A_t-P_t)+(\Theta_t1P_t-\Theta_t)+\Theta_t:=R_t^{(1)}
+R_t^{(2)}+\Theta_t\,,$$
where $P_t$ is a nice approximate identity as in Definition \ref{Ptdefinition}.

By (\ref{1.thetabQcondition}), the contribution of $\Theta_tb_Q$ is controlled by $C|Q|$ as desired. Moreover $R_t^{(2)}1=0$, and its kernel satisfies (\ref{1.pointwisebound}) and (\ref{1.pointwisecontinuity}). Thus, by standard Littlewood-Paley/Vector-valued Calder\'{o}n-Zygmund Theory, and condition \eqref{1.pbQcondition}, we have that 
$$\int_Q\left(\int_0^{\ell(Q)}|R_t^{(2)}b_Q(x)|^2\frac{dt}{t}\right)^{\frac{p}{2}}dx
\leq C_p||b_Q||_p^p\leq C|Q|.$$
Furthermore, the same $L^p$ bound holds for $R_t^{(1)}$.  Indeed, since $\theta_t 1$ is uniformly bounded, we may reduce matters to proving the square function bound
$$\int_{\mathbb{R}^n}\left(\int_{0}^{\infty}|(A_t-P_t) f(x)|^2\frac{dt}{t}\right)^{\frac{p}{2}}dx
\lesssim \int_{\mathbb{R}^n}|f(x)|^p dx\,.$$ In turn, one may establish the latter bound by
following the arguments of \cite{DRdeF}.  We omit the details.
\end{proof}

\subsection{Step 2:  Hypotheses of  Lemma \ref{1.lemma} imply hypotheses of  Sublemma \ref{1.sublemma}}\label{c2.s2}

\begin{proof}  Fix a cone $\Gamma^\epsilon$, and define $g_Q$ as in \eqref{eq2.15}.
For a large, but fixed N to be chosen momentarily, let 
$$\Omega_N:=\{x\in Q:g_Q(x)>N\}.$$
Set $E:=Q\setminus(\cup_j Q_j)$, recall that
 $\tau_Q(x):=\sum_j\ell(Q_j)\mathbbm{1}_{Q_j}(x) $,
 and observe that  $\tau_Q\equiv 0$ on $E$.
If the hypotheses of Lemma \ref{1.lemma} hold,
then by Chebyshev's inequality we have
\begin{align*}
|\Omega_N| & \leq\displaystyle\sum_j|Q_j|+|\{x\in E:g_Q(x)>N\}|\\
&\leq(1-\eta)|Q|+\left|\lbrace x\in E:\left(\int_{\tau_Q(x)}^{\ell(Q)}|\Theta_t1(x)|^2 \mathbbm{1}_{\Gamma^{2\epsilon}}(\Theta_t1(x))|\frac{dt}{t}\right)^{\frac{1}{2}}>N\rbrace\right|\\ 
&\leq (1-\eta)|Q|+\frac{1}{N^p}\int_Q\left(\int_{\tau_Q(x)}^{\ell(Q)}|\Theta_t1(x)|^2 \mathbbm{1}_{\Gamma^{2\epsilon}}(\Theta_t1(x))|\frac{dt}{t}\right)^{\frac{p}{2}}dx\\
&\leq(1-\eta)|Q|+\frac{C_1}{N^p}|Q|.
\end{align*}
Choosing N large enough so that $\frac{C_1}{N^p}\leq\frac{\eta}{2}=:\beta$, we obtain 
$ |\Omega_N|\leq(1-\beta)|Q|$.
\end{proof}

\subsection{Step 3:  Proof of Sublemma \ref{1.sublemma}}\label{c2.s3}

\begin{proof}
Let $N,\beta$ be as in the hypotheses. Fix $\gamma\in(0,1)$, a dyadic cube Q, and a cone 
$\Gamma^{\epsilon}$. We first set some notation.  Let
\begin{align}\label{eq2.17aa}
h_{Q,\gamma}(x) &
:=\left(\int_{\gamma}^{min(\ell(Q),\frac{1}{\gamma})}|\Theta_t1(x)|^21_{\Gamma^\epsilon}(\Theta_t1(x))\frac{dt}{t}\right)^{\frac{1}{2}}\,,\\
\label{eq2.17}
g_{Q,\gamma}(x) &
:=\left(\int_{\gamma}^{min(\ell(Q),\frac{1}{\gamma})}|\Theta_t1(x)|^2\chi_{\epsilon}(\Theta_t1(x))\frac{dt}{t}\right)^{\frac{1}{2}}\,,
\end{align} 
where we set these terms to be 0 if $\ell(Q)\leq\gamma$, and where $\chi_\epsilon$ 
is a cut-off function adapted to the cone $\Gamma^{2\epsilon}$, defined as follows.
We let $\chi_\epsilon$ be homogeneous of degree zero in 
$\mathbb{R}^{2m}\cong \CC^m$,  smooth
on the sphere $\mathbb{S}^{2m-1}$, 
with $0\leq \chi_\epsilon\leq 1$, such that $\chi_\epsilon(z)\equiv 1$ on $\Gamma^{\frac32 \epsilon}$,
and is supported on $\Gamma^{2\epsilon}$.
In particular,
\[
  \chi_{\epsilon}(\Theta_t1(x)) = \left\{ 
  \begin{array}{l l l}
    1 & if & \mathbbm{1}_{\Gamma^{\frac{3}{2}\epsilon}}(\Theta_t1(x))=1\\[4pt]
    0 & if & \mathbbm{1}_{\Gamma^{2\epsilon}}(\Theta_t1(x))=0 
  \end{array} \right.
\]
We set
$$k(\gamma):=\sup_Q\frac1{|Q|}\int_Q \left(h_{Q,\gamma}(x)\right)^2 dx \,,
$$
where the supremum runs over all dyadic cubes $Q$. 
We also define 
$$\Omega_{N,\gamma}:=\{x\in Q: g_{Q,\gamma}(x)>N\}\,,$$
which is an open set, by virtue of \eqref{1.pointwisecontinuity} and
the fact that we have made a smooth truncation adapted to the cone $\Gamma^{\epsilon}$.
Note that $k(\gamma)$ is finite for each fixed $\gamma$, and our goal is to show that
 $\sup_{0<\gamma<1}k(\gamma)<\infty$.  We note also that $g_{Q,\gamma}\leq g_Q$, for every $\gamma >0$, where $g_Q$ is defined
 as in \eqref{eq2.15},
and therefore, by \eqref{eq2.14},
\begin{equation}\label{eq2.14aa}
|\Omega_{N,\gamma}|\leq (1-\beta)|Q|\,.
\end{equation}
 
With this notation in place, we begin the proof.  Let
$$F_{N,\gamma}:=Q\setminus\Omega_{N,\gamma}\,,$$ 
and observe that, by \eqref{eq2.14aa}, the set
$F_{N,\gamma}$ is non-empty.
Since $\Omega_{N,\gamma}$ is (relatively) open in $Q$, 
we can make a Whitney decomposition:
$$\Omega_{N,\gamma}=\bigcup_j Q_j\,,$$ 
where the cubes $Q_j$ are a family of non-overlapping dyadic sub-cubes of $Q$,
such that for each $Q_j$ in the decomposition, we have
\begin{equation}\label{eq2.25a}
\dist(Q_j, F_{N,\gamma}) \approx \ell(Q_j)\,.
\end{equation}
We warn the reader (with apologies) about a possible point of confusion:
the present family $\{Q_j\}$ has nothing to do with the family of cubes in 
the statement of Lemma \ref{1.lemma}.

We then have
\begin{multline*}
\int_Q \left(h_{Q,\gamma}(x)\right)^2 dx
=\int_{F_{N,\gamma}} \left(h_{Q,\gamma}(x)\right)^2dx\,\,+\,\,\sum_j\int_{Q_j} 
\left(h_{Q,\gamma}(x)\right)^2dx\\
\leq\int_{F_{N,\gamma}} \left(g_{Q,\gamma}(x)\right)^2dx\,\,+\,\,\sum_j\int_{Q_j} 
\left(h_{Q_j,\gamma}(x)\right)^2dx\\
\qquad+\,\,\sum_j\int_{Q_j}\int_{max(\gamma,\ell(Q_j))}^{min(\ell(Q),\frac{1}{\gamma})}|\Theta_t1(x)|^2\mathbbm{1}_{\Gamma^{\epsilon}}(\Theta_t1(x))\frac{dtdx}{t}\\
\leq N^2|Q|+k(\gamma)\sum_j|Q_j|+\displaystyle\sum_j\int_{Q_j}\int_{max(\gamma,\ell(Q_j))}^{min(\ell(Q),\frac{1}{\gamma})}|\Theta_t1(x)|^2\mathbbm{1}_{\Gamma^{\epsilon}}(\Theta_t1(x))\frac{dtdx}{t}\\
\leq N^2|Q|+k(\gamma)(1-\beta)|Q|+\displaystyle\sum_j\int_{Q_j}\int_{max(\gamma,\ell(Q_j))}^{min(\ell(Q),\frac{1}{\gamma})}|\Theta_t1(x)|^2\mathbbm{1}_{\Gamma^{\epsilon}}(\Theta_t1(x))\frac{dtdx}{t}\,.
\end{multline*}
We now make the following
\begin{claim}
\begin{equation}
L_j:=\int_{Q_j}\int_{max(\ell(Q_j),\gamma)}^{min(\ell(Q),\frac{1}{\gamma})}|\Theta_t1(x)|^2\mathbbm{1}_{\Gamma^{\epsilon}}(\Theta_t1(x))\frac{dtdx}{t}\leq C|Q_j|.
\end{equation}
\label{1.claim}\end{claim}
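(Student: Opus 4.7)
Since $Q_j$ is Whitney with respect to $\Omega_{N,\gamma}$, \eqref{eq2.25a} lets us fix a single point $x^*\in F_{N,\gamma}$ with $|x-x^*|\leq C_W\,\ell(Q_j)$ for every $x\in Q_j$, where $C_W$ is a geometric Whitney constant. By definition of $F_{N,\gamma}$, $g_{Q,\gamma}(x^*)\leq N$, so the strategy is to reduce the estimate at each $x\in Q_j$ to this known pointwise bound at $x^*$, exploiting kernel smoothness via \eqref{1.pointwisecontinuity}.

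I would split the $t$-integral in $L_j$ at the threshold $t_0:=2C_W\,\ell(Q_j)$. On $\max(\ell(Q_j),\gamma)\leq t\leq t_0$ there are only finitely many dyadic scales, and the pointwise kernel bound \eqref{1.pointwisebound} yields the uniform estimate $\|\Theta_t 1\|_\infty\lesssim 1$, so this piece contributes at most $C\,|Q_j|$. On $t\geq t_0$ we have $|x-x^*|\leq t/2$, so integrating \eqref{1.pointwisecontinuity} in $y$ gives
\[
|\Theta_t 1(x)-\Theta_t 1(x^*)|\leq C\,(\ell(Q_j)/t)^{\alpha}.
\]

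The main subtlety, and the decisive step, is a case analysis designed to bridge the hard indicator $\mathbbm{1}_{\Gamma^\epsilon}(\Theta_t 1(x))$ with the smooth cut-off $\chi_\epsilon(\Theta_t 1(x^*))$ that actually appears in $g_{Q,\gamma}(x^*)$. Fix a large constant $K=K(\epsilon)$ (e.g., $K=8/\epsilon$). If $|\Theta_t 1(x^*)|\geq K\,|\Theta_t 1(x)-\Theta_t 1(x^*)|$, then an elementary computation shows that the unit directions of $\Theta_t 1(x)$ and $\Theta_t 1(x^*)$ differ by at most $\epsilon/2$, so $\Theta_t 1(x)\in\Gamma^\epsilon$ forces $\Theta_t 1(x^*)\in\Gamma^{3\epsilon/2}$, where $\chi_\epsilon\equiv 1$; moreover $|\Theta_t 1(x)|\leq 2|\Theta_t 1(x^*)|$. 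Otherwise $|\Theta_t 1(x)|\leq (K+1)\,(\ell(Q_j)/t)^{\alpha}$ directly. In either case,
\[
|\Theta_t 1(x)|^2\,\mathbbm{1}_{\Gamma^\epsilon}(\Theta_t 1(x))\leq 4\,|\Theta_t 1(x^*)|^2\,\chi_\epsilon(\Theta_t 1(x^*))+C\,(\ell(Q_j)/t)^{2\alpha}.
\]
Integrating against $dt/t$ on $t\geq t_0$ and then in $x$ over $Q_j$, the second term contributes at most $C|Q_j|$ since $\alpha>0$ makes $\int_{t_0}^{\infty}(\ell(Q_j)/t)^{2\alpha}\,dt/t$ finite, while the first term is bounded by $g_{Q,\gamma}(x^*)^2\,|Q_j|\leq N^2\,|Q_j|$ because the effective $t$-range lies inside $[\gamma,\min(\ell(Q),1/\gamma)]$. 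Summing the two pieces yields $L_j\leq C|Q_j|$ with a constant depending on $N$, $\epsilon$, and the kernel constants but crucially \emph{not} on $\gamma$, which is exactly the uniformity needed to close the iteration for $k(\gamma)$.
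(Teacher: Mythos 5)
Your proof is correct, and it is essentially the same as the paper's, just with a cleaner packaging. Both arguments hinge on the same three ingredients: (i) fix a Whitney reference point $x^*=x_j\in F_{N,\gamma}$ at distance $\lesssim\ell(Q_j)$ and use $g_{Q,\gamma}(x^*)\leq N$ to supply the ``good'' term; (ii) use the kernel smoothness \eqref{1.pointwisecontinuity} to get $|\Theta_t 1(x)-\Theta_t 1(x^*)|\lesssim(\ell(Q_j)/t)^\alpha$ once $t$ is large compared to $\ell(Q_j)$; and (iii) a size--direction dichotomy that converts $\mathbbm{1}_{\Gamma^\epsilon}(\Theta_t1(x))$ into $\chi_\epsilon(\Theta_t1(x^*))$ via Remark \ref{remark2.8}. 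The cosmetic differences: the paper partitions $Q_j$ into $Q_j^{(1)},Q_j^{(2)},Q_j^{(3)}$ using a threshold $(\ell(Q_j)/t)^\beta/\epsilon$ with an auxiliary exponent $\beta<\alpha$, then inside $Q_j^{(3)}$ applies the elementary inequality $|a|^2\lesssim|a-b|^2+|b|^2$ and does a further two-case analysis on $\mathbbm{1}_{\Gamma^\epsilon}(\Theta_t1(x_j))$; you instead threshold $|\Theta_t1(x^*)|$ against the H\"older increment directly with a single constant $K=K(\epsilon)$, which collapses the paper's nested decomposition into one flat dichotomy and makes the auxiliary exponent $\beta$ unnecessary. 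Both routes yield the $\gamma$-independent bound $L_j\leq C(N,\epsilon)|Q_j|$. Your pointwise inequality
\[
|\Theta_t 1(x)|^2\,\mathbbm{1}_{\Gamma^\epsilon}(\Theta_t 1(x))\leq 4\,|\Theta_t 1(x^*)|^2\,\chi_\epsilon(\Theta_t 1(x^*))+C\,(\ell(Q_j)/t)^{2\alpha}
\]
is exactly the content of the paper's terms $L_j^1$, $II$, and $III$ combined.
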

Given the claim, we may divide by $|Q|$, and then take a supremum in $Q$, to obtain the uniform bound
$$ k(\gamma) \leq \frac{C_N}{\beta}\,.$$
Therefore letting $\gamma$ approach zero we have that
$$\int_Q\int_0^{\ell(Q)}|\Theta1(x)|^21_{\Gamma^{\epsilon}}(\Theta_t1(x))\frac{dtdx}{t}\leq C|Q|\,,$$
uniformly for all cubes $Q$ and all cones $\Gamma^\epsilon$.
Summing over the cones $\Gamma^\epsilon_k$, we  conclude that
\begin{align*}
\int_Q\int_0^{\ell(Q)}|\Theta_t1(x)|^2\frac{dt}{t}dx &
\leq\int_Q\int_0^{\ell(Q)}\sum_k|\Theta_t1(x)|^2\mathbbm{1}_{\Gamma_k^{\epsilon}}(\Theta_t1(x))\frac{dt}{t}dx\\ 
&\leq\sum_k\int_Q\int_0^{\ell(Q)}|\Theta_t1(x)|^2\mathbbm{1}_{\Gamma_k^{\epsilon}}(\Theta_t1(x))\frac{dt}{t}dx\\    
&\leq \,C K(\epsilon,m) \, |Q|,
\end{align*}
where $K(\epsilon, m)$ is the number of cones of aperture $\epsilon$ needed 
to cover $\mathbb{C}^m
\cong \mathbb{R}^{2m}$.
\end{proof}

\begin{proof}[Proof of Claim \ref{1.claim}]

Let $0<\beta<\alpha$, where $\alpha$ is the exponent  in
the kernel estimates  \eqref{1.pointwisebound} and \eqref{1.pointwisecontinuity}, 
and define the following sets
\begin{align*}
Q_j^{(1)} & :=\left\{x\in Q_j: \ |\Theta_t1(x)|\leq\left(\frac{\ell(Q_j)}{t}\right)^{\beta}\frac{1}{\epsilon}\right\};\\
Q_j^{(2)} & :=\{x\in Q_j: \ \mathbbm{1}_{\Gamma^{\epsilon}}(\Theta_t1(x))=0\};\\
Q_j^{(3)} & :=Q_j\setminus( Q_j^{(1)}\cup Q_j^{(2)}).\\
\end{align*}
Then $L_j\leq L_j^1+L_j^2+L_j^3$ where 
$$L_j^i=\int_{Q_j^{(i)}}\int_{max(\ell(Q_j),\gamma)}^{min(\ell(Q),\frac{1}{\gamma})}|\Theta_t1(x)|^2\mathbbm{1}_{\Gamma^{\epsilon}}(\Theta_t1(x))\frac{dtdx}{t}\,, \qquad  i=1,2,3\,.$$
Trivially, $ L_j^2=0$.   Moreover,  
\begin{equation*}
 L_j^1 
\,\leq\,  \int_{Q_j}\int_{\ell(Q_j)}^{\ell(Q)}\left(\frac{\ell(Q_j)}{t}\right)^{\beta}\frac{1}{\epsilon}\frac{dxdt}{t}
\,\leq \, C(n,\beta,\epsilon)\,|Q_j|.
\end{equation*}
To estimate $L_j^3$, we note that by the standard property of Whitney cubes (i.e., \eqref{eq2.25a},
to be precise), there is a point $ x_j\in F_{N,\gamma}$ 
such that $\dist(x_j,Q_j)\lesssim \ell(Q_j)$.  We fix such an $x_j$, 
and a sufficiently large dimensional constant $C_n$ to be chosen,
and decompose $L_3$ as follows:
\begin{multline*}
 L_j^3\lesssim\int_{Q_j^{(3)}}\int_{\ell(Q_j)}^{C_n\ell(Q_j)}|
\Theta_t1(x)|^2\mathbbm{1}_{\Gamma^{\epsilon}}(\Theta_t1(x))\frac{dtdx}{t}\\+\int_{Q_j^{(3)}}\int_{C_nl(Q_j)}^{\ell(Q)}|\Theta_t1(x)\mathbbm{1}_{\Gamma^{\epsilon}}(\Theta_t1(x))-\Theta_t1(x_j)\chi_{\epsilon}(\Theta_t1(x_j))|^2\frac{dtdx}{t}\\
+\int_{Q_j^{(3)}}\int_{\gamma}^{min(\ell(Q),\frac{1}{\gamma})}
|\Theta_t1(x_j)|^2\,\chi_{\epsilon}\left(\Theta_t1(x_j)\right)\frac{dtdx}{t}\\
=: I+II+III\,.
\end{multline*}
For I,  we use that $\sup_{t>0}\Vert \Theta_t 1\Vert_{\infty}<\infty$, and for III,
that $x_j\in F_{N,\gamma}$, so $I+III\leq C(n,N)|Q_j|$.

For II we have two cases:  

\noindent\underline{Case 1}: $\mathbbm{1}_{\Gamma^{\epsilon}}(\Theta_t1(x_j))=1$.
Then for $C_n$ large enough, by \eqref{1.pointwisecontinuity}, we have 
\begin{equation}\label{eq2.28a}
|\Theta_t1(x)-\Theta_t1(x_j)|\leq C\left(\frac{\ell(Q_j)}{t}\right)^{\alpha}\,,\qquad \forall \, x\in Q_j \,.
\end{equation}
Consequently,
 $II\leq\int_{Q_j}\int_{C_n \ell(Q_j)}^{\infty}\left(\frac{\ell(Q_j)}{t}\right)^{2\alpha}\frac{dtdx}{t}\leq C|Q_j|.$

\noindent\underline{Case 2}: $\mathbbm{1}_{\Gamma^{\epsilon}}(\Theta_t1(x_j))=0$.  Then
$|\nu-\frac{\Theta_t1(x_j)}{|\Theta_t1(x_j)|}|>\epsilon$.  On the other hand, for $x\in Q_j^{(3)}$,
we have that $|\nu-\frac{\Theta_t1(x)}{|\Theta_t1(x)|}|\leq\epsilon$, and also that
 $|\Theta_t1(x)|>\left(\frac{\ell(Q_j)}{t}\right)^{\beta}\frac{1}{\epsilon}$, or equivalently, that  
$ \frac{1}{|\Theta_t1(x)|}<\left(\frac{t}{\ell(Q_j)}\right)^{\beta}\epsilon$.  
 Thus, using \eqref{eq2.28a}, and the elementary inequality in Remark \ref{remark2.8} below, we have
 for $C_n$  large enough, that
\begin{multline*}\left|\frac{\Theta_t1(x)}{|\Theta_t1(x)|}-\frac{\Theta_t1(x_j)}{|\Theta_t1(x_j)|}\right|\leq 2|\Theta_t1(x)-\Theta_t1(x_j)|\cdot\frac{1}{|\Theta_t1(x)|}\\\leq (2C)\left(\frac{\ell(Q_j)}{t}\right)^{\alpha-\beta}\epsilon\leq C\left(\frac{1}{C_n}\right)^{\alpha-\beta}\epsilon\leq\frac{\epsilon}{2},\end{multline*}
so that
\begin{equation*}\left|\nu-\frac{\Theta_t1(x_j)}{|\Theta_t1(x_j)|}\right|\leq\left|\frac{\Theta_t1(x)}{|\Theta_t1(x)|}-\frac{\Theta_t1(x_j)}{|\Theta_t1(x_j)|}\right|+\left|\nu-\frac{\Theta_t1(x)}{|\Theta_t1(x)|}\right| \leq\frac{\epsilon}{2}+\epsilon\leq\frac{3}{2}\epsilon\,.
\end{equation*}
By the definition of the cut-off $\chi_\epsilon$, it then follows that
\begin{equation*}
 II\leq\int_{Q_j}\int_{C_n\ell(Q_j)}^{\ell(Q)}|\Theta_t1(x)-\Theta_t1(x_j)|
 \frac{dtdx}{t}\leq C|Q_j|\,,
\end{equation*} where again we have used \eqref{eq2.28a} as in Case 1.

\end{proof}

\begin{remark}\label{remark2.8}  Observe that
$$\left|(x|y|)-(y|x|)\right|\leq \left|(x|y|)-(y|y|)\right|+\left|(y|y|)-(y|x|)\right|\leq 2|y|\cdot|x-y|,$$
so that
$$\frac{\left|x|y|-y|x|\right|}{|x||y|}\leq 2\frac{|x-y|}{|x|}
\,\,\,\Rightarrow\left|\frac{x}{|x|}-\frac{y}{|y|}\right|\leq 2\frac{|x-y|}{|x|}.$$
\end{remark}

\section{Local Tb Theorem without pointwise kernel bounds, version 1}\label{c4}

In this section, we replace the pointwise conditions on the kernel of $\Theta_t$, by
$L^2$ or $L^q$ conditions.  For testing functions
in $L^p$, with $p<2$, this appears to require that 
we work with conical (local) square functions rather than vertical ones.  

We begin by introducing some auxiliary operators.
\begin{definition}\label{3.lpfamily}  We say that a family of convolution operators
$\{Q_s\}_{s>0}$ is a {\bf CLP family} (``Calder\'on-Littlewood-Paley" family)  if
for some $\sigma >0$, and some
$\psi\in L^1(\RR^n)$, with $|\psi(x)|\lesssim (1+|x|)^{-n-\sigma}$, 
we have
\begin{align*} Q_s f = \,s^{-n}\psi(\cdot/s) * f\,,
 \,{\rm and}\,\,
\widehat{\psi}(\xi)&\leq  \,C\min \left( |\xi|^\sigma,|\xi|^{-\sigma}\right)\\[4pt]
\sup_{s>0}\left( \| Q_sf\|_{L^2(\mathbb{R}^n)}
+||s\nabla Q_sf||_{L^2(\mathbb{R}^n)}\right)&\leq C\|f\|_{L^2(\mathbb{R}^n)}\,,\\[4pt]
\int_{\mathbb{R}^n}\int_0^{\infty}|Q_sf(x)|^2\frac{dsdx}{s}&\leq C||f||_{L^2(\mathbb{R}^n)}^2\,, 
\\[4pt]
\int_0^{\infty}Q_s^2\frac{ds}{s}&=I\,,
\end{align*}
where convergence to the identity in the last formula is in the strong 
operator topology on $\mathcal{B}(L^2)$. 
\end{definition}

We also introduce some additional notation.  
\begin{itemize}
\item Given a dyadic cube $Q$, we let $\DD(Q)$ denote the collection of all
dyadic sub-cubes of $Q$ (including, of course, $Q$ itself).
\item Given a cube $Q$, we let $U_Q:= Q\times (\ell(Q)/2,\ell(Q)]$ be the
``Whitney box" above $Q$.
\item Given $x\in \RR^n$, we define the ``dyadic cone" with vertex at $x$ by
\begin{equation}\label{eq3.1}
\GD(x):= \bigcup_{Q\in \DD:\, x\in Q} U_Q\,,
\end{equation}
and given a dyadic cube $Q$, for $x\in Q$,
we define the ``truncated dyadic cone" with vertex at $x$, of height $\ell(Q)$, by
\begin{equation}\label{eq3.2}
\GD_Q(x):= \bigcup_{Q'\in \DD(Q):\, x\in Q'} U_{Q'}
\end{equation}
\end{itemize}

\begin{definition}\label{3.generalsqfunc}

 We consider a family of operators $\{\Theta_t\}_{t>0}$, taking values in $\CC^m,\,m\geq 1$,
 so that  $\Theta_t := (\Theta_t^{1},\Theta_t^{2},
...,\Theta_t^{m})$, and
for $f=(f^1,f^2,...,f^m)\in L^2(\mathbb{R}^n,\CC^m)$, we set
$$ \Theta_tf:=\sum_{j=1}^m \Theta_t^{j}f^j\,.$$ 
We also define the action of $\Theta_t$ on an $m\times m$ matrix valued function $b =(b_{j,k})$,
in the obvious way, i.e., $\Theta_t b=((\Theta_t b)_1,(\Theta_t b)_2,...,(\Theta_t b)_m)$
is a $\CC^m$ valued function, with
$$\left(\Theta_t b\right)_k:= \sum_{j=1}^m \Theta^j_t b_{j,k}\,.$$
We suppose that $\Theta_t$ satisfies the following properties: 

\noindent
\item[(a)] (Uniform $L^2$ bounds and off-diagonal decay in $L^2$).
\begin{equation}
\displaystyle \sup_{t>0}||\Theta_t f||_{L^2(\mathbb{R}^n)}\leq C||f||_{L^2(\mathbb{R}^n)}\,,
\label{3.supl2bound}\end{equation}

\begin{equation}
||\Theta_t f_j||_{L^2(Q)}\leq C2^{-j(n+2+\beta)/2}||f_j||_{L^2(2^{j+1}Q\setminus 2^jQ)}\,, \quad 
\ell(Q)\leq t\leq 2\ell(Q)\,,
\label{3.annulibound}\end{equation} 
\noindent for some $\beta>0$, where $f_j:=f\mathbbm{1}_{2^{j+1}Q\setminus 2^jQ}$.

\smallskip

\noindent(b) (Quasi-orthogonality in $L^2$).   
For some (hence every) CLP family $\{Q_s\}$, 
there is a $\beta>0$ for which we have
\begin{equation}
||\Theta_t Q_sh||_{L^2(\mathbb{R}^n)}\leq C\left(\frac{s}{t}\right)^{\beta}||h||_{L^2(\mathbb{R}^n)}, \quad 
 \forall s\leq t\,.
\label{3.Qsbound}\end{equation}

\smallskip

\noindent(c) (``Hypercontractive" off-diagonal decay). There is some $1<r<2$,  and some
$\mu>\frac{n}{r}$ $(\mu=\frac{n}{r}+\varepsilon,$ for some $\varepsilon>0)$, such that
\begin{multline}
\left(\int_{Q^*}|\Theta_t(f\mathbbm{1}_{S_j}(Q))(y)|^2dy\right)^{\frac{1}{2}}\leq C2^{-j\mu}t^{-n(\frac{1}{r}-\frac{1}{2})}
\left(\int_{S_j(Q)}|f(y)|^rdy\right)^{\frac{1}{r}}\,,\\[4pt] \forall j \geq 0\,, \,\ell(Q)<t\leq2\ell(Q)\,,
\label{3.rcondition}\end{multline}
where $S_0(Q)=16Q$ and $S_j(Q)=2^{j+4}Q\setminus2^{j+3}Q$, $j\geq 1$.
$Q^*\equiv 8Q$.

\smallskip

\noindent(d) (Improved integrability). There is an exponent $q>2$ such that
\begin{equation}  
\sup_{t>0}||\Theta_tf||_{L^q(\mathbb{R}^n)}\leq C||f||_{L^q(\mathbb{R}^n)}\,.
\label{3.qcondition}\end{equation}
\end{definition}

\begin{remark}\label{remark 4.1b} If \eqref{3.annulibound} 
holds for all $t>0$ (not just for $t\approx \ell(Q)$), as is often  the case in applications,
then hypothesis (d) is redundant, and in fact \eqref{3.qcondition} holds for all $q>2$;
indeed, one has the pointwise bound
$$\mathcal{M}(\Theta_t f) \lesssim \left(\mathcal{M}(|f|^2)\right)^{1/2}\,.$$
We leave the details to the interested reader.
\end{remark}

\begin{remark}\label{remark 4.1a}  We observe that, for example, (b), (c), and (d) hold,
with $\theta_t=t\partial_t P_t,$ where $P_t=e^{-t\sqrt{-\Delta}}$ is the usual Poisson semigroup,
and that  (a) holds with $\beta=0$, for the same operator.   We may obtain a positive value of $\beta$ in (a), by considering higher order derivatives of $P_t$.   As a practical matter, when considering
square functions arising in PDE applications, it is often a fairly routine matter to pass to higher order
derivatives.
The advantage of the present formulation of
our conditions, is that these conditions may continue to hold in the absence of pointwise kernel bounds. 
In PDE applications, (d) is typically obtained as a consequence of higher integrability estimates
of ``N. Meyers" type (cf. \cite{Me2}).
\end{remark}

 We shall need to work with some ``dyadic conical" square functions, and their local analogues. 

 \begin{definition}\label{d3.10}
Given $F$ with domain $\RR^{n+1}_+$, we define the 
``dyadic conical square function" of $F$ by
$$\AD (F)(x):=\left(\iint_{\GD(x)}|F(y,t)|^2\frac{dy dt}{t^{n+1}}\right)^{1/2}\,.$$
Similarly, we define the
``truncated dyadic conical square function" relative to a dyadic cube $Q$ by
$$\ADQ (F)(x):=\left(\iint_{\GD_Q(x)}|F(y,t)|^2\frac{dy dt}{t^{n+1}}\right)^{1/2}\,.$$
\end{definition} 

\begin{remark}\label{r3.3}  We observe that $\ADQ (F)$ 
is dominated by the standard local square function
$$\AC^\alpha_Q (F)(x):= \left(\int_0^{\ell(Q)}\!\!\int_{|x-y|<\alpha t}|F(y,t)|^2
\frac{dydt}{t^{n+1}}\right)^{1/2}\,,$$
assuming that the aperture $\alpha$ is chosen large enough, as may be seen from the definition of the 
dyadic cones, and a trivial geometric argument.
\end{remark} 

As before, for
$\epsilon$ small but fixed, we cover $\mathbb{C}^m$ by cones $\Gamma^\epsilon_k$,
of aperture $\epsilon$, with vertex at the origin. 
The constants in our estimates 
are then allowed to depend on $K=K(\epsilon,m)$, the number of cones
in the covering.  Given any such cone $\Gamma^\epsilon$,
we shall also need to consider dyadic conical square functions ``restricted" to the doubled cone
$\Gamma^{2\epsilon}$:
 \begin{equation}\label{eq3.11}
 \ADQG (F)(x):=\left(\iint_{\GD_Q(x)}|F(y,t)|^2\,
  \mathbbm{1}_{\Gamma^{\epsilon}}\big(F(y,t)\big)
\frac{dy dt}{t^{n+1}}\right)^{1/2}\,.
 \end{equation}

Our main result in this section is the following:  
\begin{theorem}\label{3.genmatrixtheorem}
Let $\{\Theta_t\}_{t>0}$, be as in Definition \ref{3.generalsqfunc} above,
 and suppose that there exist  positive constants $C_0<\infty$, and $\delta>0$, 
 an exponent $p>r$,  and a system $\{b_Q\}$ of complex $m\times m$ matrix-valued
 functions indexed by  dyadic
cubes $Q\subset\mathbb{R}^n$, 
such that for each dyadic cube Q:

\begin{equation}
\int_{\mathbb{R}^n}|b_Q(x)|^pdx\leq C_0|Q|,
\label{3.pbQcondition}\end{equation}

\begin{equation}
\int_Q\left(\ADQ (\Theta_t b_Q)(x)\right)^{p}dx\leq C_0|Q|,
\label{3.thetabQcondition}\end{equation}


\begin{equation}
\delta\,|\xi|^2 
\,\leq\, Re\left(\xi\cdot\fint_Qb_Q(x) dx 
\, \bar{\xi}\right), \qquad \forall\xi\in\mathbb{C}^m \,.
\label{3.ellipticitybQcond}\end{equation}
Then
\begin{equation}
\iint_{\mathbb{R}_+^{n+1}}|\Theta_tf(x)|^2\frac{dxdt}{t}\leq C||f||_{L^2(\mathbb{R}^n)}^2, 
\label{3.conclusion}\end{equation}

\label{3.maintheorem}\end{theorem}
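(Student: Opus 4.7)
The plan is to mirror the three-step architecture used for Theorem \ref{1.pointwisetheorem}, with essential modifications that compensate for the loss of pointwise kernel bounds by systematic use of hypotheses (a)--(d). The vertical square function in the reduction must be replaced by the dyadic conical square function $\ADQ$ throughout, since the testing hypothesis \eqref{3.thetabQcondition} is now conical, and in the regime $r<p<2$ only conical estimates are stable under the relevant Coifman--Meyer type manipulations.

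First, I would establish a $T1$-type reduction appropriate to this setting. Using a CLP family $\{Q_s\}$ and writing $f=\int_0^\infty Q_s^2 f\,ds/s$, I split $\Theta_t f = \int_{s\leq t}\Theta_t Q_s Q_s f\,ds/s + \int_{s>t}\Theta_t Q_s Q_s f\,ds/s$. Quasi-orthogonality \eqref{3.Qsbound} together with the uniform $L^2$-bound \eqref{3.supl2bound} handles the first piece by a standard Schur-type argument, while the second piece can be rewritten, via a telescoping identity, in terms of $\Theta_t A_t$, reducing the desired square-function bound \eqref{3.conclusion} to a Carleson measure estimate $\sup_Q |Q|^{-1} \iint_{R_Q}|\Theta_t A_t 1(x)|^2 dx\,dt/t \leq C$, where ``$1$'' is the $m\times m$ identity matrix. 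With this reduction in hand, I would repeat the sectorial decomposition of Lemma \ref{l2.6}: cover $\CC^m$ by small cones $\Gamma_k^\epsilon$, and for each doubled cone $\Gamma^{2\epsilon}$ with axis $\nu$, perform the same stopping-time selection using the type I ($L^1$-mean) and type II (accretivity) conditions, producing non-overlapping $\{Q_j\}$ satisfying \eqref{1.cond.1} and yielding the pointwise domination $|\Theta_t 1|^2\mathbbm{1}_{\Gamma^{2\epsilon}}(\Theta_t 1)\leq 4|\Theta_t 1\cdot A_t b_Q|^2$ off the bad set, exactly as in the proof of Lemma \ref{l2.6}. The analogues of Steps 2 and 3 (passage from Lemma \ref{1.lemma} to Sublemma \ref{1.sublemma}, and the Whitney good-$\lambda$ argument proving Sublemma \ref{1.sublemma}) transfer with minimal change, provided the cone cut-off $\chi_\epsilon$ is applied after a mild mollification so that the continuity requirement underlying Claim \ref{1.claim} survives in the absence of pointwise smoothness of the kernel.

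The main obstacle is the analogue of the Coifman--Meyer step. Writing $\Theta_t 1 \cdot A_t = \Theta_t + R_t^{(1)} + R_t^{(2)}$ with $R_t^{(1)}:=(\Theta_t 1)(A_t-P_t)$ and $R_t^{(2)}:=\Theta_t 1\cdot P_t - \Theta_t$, the term $\Theta_t b_Q$ is controlled directly by the hypothesis \eqref{3.thetabQcondition}; the operator $A_t - P_t$ produces a conical-square-function bound on $R_t^{(1)} b_Q$ in $L^p$ via the standard $\dot F^0_{p,2}$ characterization and \eqref{3.pbQcondition}. The genuinely new difficulty is bounding the conical square function of $R_t^{(2)} b_Q$ in $L^p$, since $R_t^{(2)}$ no longer enjoys pointwise Calder\'on--Zygmund kernel bounds. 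Here the hypotheses (a)--(d) must work together: the off-diagonal decay \eqref{3.annulibound} and quasi-orthogonality \eqref{3.Qsbound} yield the conical $L^2$ bound $\|\ADQ(R_t^{(2)} g)\|_{L^2}\lesssim \|g\|_{L^2}$ via Schur's test against a CLP resolution of the identity; the improved integrability \eqref{3.qcondition} gives the corresponding $L^q$ bound for some $q>2$; and the hypercontractive off-diagonal estimate \eqref{3.rcondition} provides sufficient weighted control so that an $A_p$-extrapolation (as alluded to in the acknowledgements) interpolates between the $L^q$ bound and the $L^r$ endpoint to yield the desired $L^p$ estimate for every $p>r$. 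Executing this weighted extrapolation cleanly, while retaining scale invariance of all constants, is the principal technical task of the proof; everything else is a careful adaptation of the arguments already carried out for Theorem \ref{1.pointwisetheorem}.
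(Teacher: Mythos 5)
Your overall architecture (T1 reduction, sectorial decomposition, the three-step scheme) matches the paper, but there are two genuine gaps where the details you propose would not go through.

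First, the Coifman--Meyer step. You want to bound $R_t^{(1)} b_Q = (\Theta_t 1)(A_t-P_t)b_Q$ by ``the standard $\dot F^0_{p,2}$ characterization.'' This requires a uniform $L^\infty$ bound on $\Theta_t 1$, which in the present setting you do not have: without pointwise kernel bounds, the only thing available is $\sup_t\|(\Theta_t1)A_t\|_{L^2\to L^2}\le C$ via Lemma \ref{lemma3.11AAAHK}. In fact the paper deliberately takes a different decomposition, $R_t^{(1)}=(\Theta_t1)A_t(A_t-P_t)$ (inserting $A_tP_t$ rather than $P_t$), precisely so that $R_t^{(1)}$ inherits \eqref{3.supl2bound}--\eqref{3.annulibound} through $(\Theta_t1)A_t$ and can be treated on the same footing as $R_t^{(2)}$. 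Your treatment of $R_t^{(2)}$ is also off: extrapolation (the [C-UMP] proposition) is not an interpolation device between $L^q$ and $L^r$. The argument the paper actually runs is (i) an \emph{unweighted} $L^2$ quasi-orthogonality estimate $\|R_t^{(i)}Q_sh\|_2\lesssim\min(s/t,t/s)^{\beta_0}\|h\|_2$ from (a) and (b), (ii) a \emph{weighted but decay-free} $L^2_{\tilde v}$ bound from the hypercontractive condition (c) and the $A_{2/r}$ maximal bound, and then (iii) Stein--Weiss interpolation with change of measure to transfer the $(s/t)$-decay to the weighted setting, before (iv) a Schur argument in $s$ and the [C-UMP] extrapolation in the weight. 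Steps (iii)--(iv) are the mechanism that lets the range of $p$ go all the way down to $r$; your sketch omits (iii) and conflates (iv) with interpolation, so the $L^p$ bound on $\ADQ(R_t^{(i)}b_Q)$ is not actually established.

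Second, and more fundamentally, you claim Steps~2 and~3 ``transfer with minimal change, provided the cone cut-off $\chi_\epsilon$ is applied after a mild mollification so that the continuity requirement underlying Claim \ref{1.claim} survives.'' This misidentifies the obstruction. Mollifying $\chi_\epsilon$ cannot restore the ingredient that Claim \ref{1.claim} really needs, namely the pointwise H\"older continuity \eqref{1.pointwisecontinuity} of $\Theta_t1(\cdot)$ (to compare $\Theta_t1(x)$ with $\Theta_t1(x_j)$ for a nearby $x_j\in F_{N,\gamma}$), and without pointwise kernel bounds this is unavailable; moreover the set $\Omega_{N,M}$ need not be open, so a Whitney covering is not available either. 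The paper's Step~3 does not mollify and has no analogue of Claim \ref{1.claim}: it reformulates $G_Q$ as a \emph{dyadic} conical square function $\ADQG(\Theta_t1)$, replaces the Whitney decomposition by a purely dyadic stopping time (maximal dyadic $Q_j$ whose interior misses $F_{N,M}$), observes the exact splitting $\GD_{Q,M}(x)=\GD_{Q_j,M}(x)\cup\GDT_{Q,M}(x)$ for $x\in Q_j$, and deduces $K(M)\le 2N^2/\beta$ with no continuity whatsoever. This structural change, not a mollification, is what makes the absence of pointwise kernel bounds harmless, and it is the reason the whole argument is phrased via $\ADQ$ rather than the vertical square function from Section~\ref{c2}.
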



\begin{remark}\label{r3.6}
We observe that by Remark \ref{r3.3},
we could replace $\ADQ$ by $\AC^\alpha_Q$, with $\alpha$ sufficiently large,
in hypothesis \eqref{3.thetabQcondition}.
\end{remark}

We begin with a generalization of the Christ-Journ\'e $T1$ theorem for square functions.

\begin{theorem}(T1 Theorem)
Suppose that $\Theta_tf(x)$ satisfies conditions \eqref{3.supl2bound}, \eqref{3.annulibound} 
and \eqref{3.Qsbound} as above, 
and also the Carleson measure estimate
\begin{equation}
\displaystyle\sup_Q\frac{1}{|Q|}\int_Q\int_0^{\ell(Q)}|\Theta_t1(x)|^2\frac{dxdt}{t}\leq C.
\label{3.carlessoncondition}\end{equation}
Then we have the square function estimate
\begin{equation}
\iint_{\mathbb{R}_+^{n+1}}|\Theta_tf(x)|^2\frac{dxdt}{t}\leq C||f||_{L^2(\mathbb{R}^n)}^2\,.
\end{equation}
\label{3.T1theorem}\end{theorem}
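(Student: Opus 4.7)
The plan is to follow the Christ--Journ\'e framework \cite{CJ}, adapted to the kernel-free setting, via the Calder\'on reproducing formula and Schur-type almost orthogonality. First I would fix any CLP family $\{Q_s\}_{s>0}$ from Definition \ref{3.lpfamily} and apply the reproducing formula $f = \int_0^\infty Q_s^2 f \,\frac{ds}{s}$ to write
$$\Theta_t f = \int_0^\infty (\Theta_t Q_s)(Q_s f) \,\frac{ds}{s}.$$
A standard Cauchy--Schwarz/Fubini estimate in $s$, with the dyadic weight $\min(s/t, t/s)^{\beta/2}$, together with the CLP property $\int_0^\infty \|Q_s f\|_{L^2}^2 \,\frac{ds}{s} \lesssim \|f\|_{L^2}^2$, would reduce the desired square function bound to the almost-orthogonality estimate
$$\|\Theta_t Q_s\|_{L^2 \to L^2} \,\lesssim\, \min\left(\frac{s}{t}, \frac{t}{s}\right)^{\beta}.$$

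The case $s \leq t$ is precisely hypothesis \eqref{3.Qsbound}, so the main work concerns $s > t$. Here the Carleson hypothesis \eqref{3.carlessoncondition} enters through a Coifman--Meyer paraproduct decomposition
$$\Theta_t \,=\, R_t + M_{\Theta_t 1}\, P_t, \qquad R_t := \Theta_t - M_{\Theta_t 1}\, P_t,$$
where $P_t$ is the nice smooth approximate identity of Definition \ref{Ptdefinition} and $M_{\Theta_t 1}$ denotes multiplication by $\Theta_t 1$. The remainder $R_t$ satisfies the cancellation $R_t 1 = 0$ and inherits $L^2$ off-diagonal decay from \eqref{3.annulibound}; the operator $M_{\Theta_t 1} P_t$ itself is rendered well-defined and $L^2$-bounded by Lemma \ref{lemma3.11AAAHK} applied with $b = \Theta_t 1$, which precisely requires the off-diagonal hypothesis \eqref{3.annulibound} and the Carleson control \eqref{3.carlessoncondition}.

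The contribution of the paraproduct piece $\iint |M_{\Theta_t 1}\,P_t f|^2\,\frac{dx\,dt}{t}$ is estimated by Carleson's inequality directly, since $|\Theta_t 1|^2\,\frac{dx\,dt}{t}$ is a Carleson measure by \eqref{3.carlessoncondition}; one obtains the bound $\|N_*(P_t f)\|_{L^2}^2 \lesssim \|f\|_{L^2}^2$, so this piece is absorbed into the conclusion without ever being fitted into the almost-orthogonality scheme. For $R_t$ itself, the cancellation $R_t 1 = 0$ permits the replacement $R_t Q_s f(x) = R_t\big(Q_s f - Q_s f(x)\big)(x)$, and then the gradient bound $\|s\nabla Q_s f\|_{L^2} \lesssim \|f\|_{L^2}$ built into the CLP definition, combined with dyadic annular decomposition against \eqref{3.annulibound}, gives $\|R_t Q_s\|_{L^2 \to L^2} \lesssim (t/s)^{\beta'}$ for $s > t$. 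The matching bound for $s \leq t$ follows from \eqref{3.Qsbound} and the already-established control on $M_{\Theta_t 1}\, P_t\, Q_s$ via Lemma \ref{lemma3.11AAAHK}.

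The main obstacle I anticipate is the $s > t$ almost-orthogonality for $R_t$. Without pointwise kernel bounds on $\Theta_t$, the cancellation $R_t 1 = 0$ must be leveraged purely through the integrated off-diagonal decay \eqref{3.annulibound}, replacing the Taylor expansion of a pointwise kernel that one would use in the classical Christ--Journ\'e argument. Moreover, since $\Theta_t 1$ is only a BMO object (a Carleson condition on $|\Theta_t 1|^2 \frac{dxdt}{t}$ does not give an $L^\infty$ bound on $\Theta_t 1$), the paraproduct piece cannot be handled by operator-norm multiplication estimates and must instead be peeled off \emph{before} entering the Schur argument, and treated separately by Carleson's inequality. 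Managing this dichotomy cleanly --- and choosing $\beta' > \beta/2$ so that the weighted double integral in $s$ and $t$ actually converges --- constitutes the technical heart of the argument.
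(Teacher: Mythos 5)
Your overall architecture --- a Coifman--Meyer paraproduct decomposition of $\Theta_t$, Carleson's embedding lemma for the paraproduct piece, and a Schur-type almost-orthogonality argument against a CLP family for the remainder $R_t$ --- is the same as the paper's proof in Section~\ref{st1}. There is, however, a structural difference that creates a genuine gap. The paper's decomposition is $\Theta_t h = R_t h + (\Theta_t 1) A_t P_t h$ with $R_t := \Theta_t - (\Theta_t 1) A_t P_t$, i.e.\ the dyadic averaging operator $A_t$ is interposed between the multiplication by $\Theta_t 1$ and the smoothing $P_t$. You drop the $A_t$ and use $R_t := \Theta_t - M_{\Theta_t 1}P_t$. (A secondary presentational issue: your opening reduction targets $\|\Theta_t Q_s\|_{L^2\to L^2}\lesssim \min(s/t,t/s)^\beta$, but this is false for $s>t$ precisely because of the paraproduct; you correctly pivot to peeling that piece off before the Schur argument, so the Schur estimate must in fact be carried out for $R_t$, not $\Theta_t$.)

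The gap is in your treatment of the regime $s\le t$ for $R_t Q_s$. Since $R_tQ_s=\Theta_tQ_s - M_{\Theta_t1}P_tQ_s$, hypothesis~\eqref{3.Qsbound} handles $\Theta_tQ_s$, and you claim that the matching decay for $M_{\Theta_t1}P_tQ_s$ is the ``already-established control \ldots\ via Lemma~\ref{lemma3.11AAAHK}.'' But Lemma~\ref{lemma3.11AAAHK} yields only the uniform bound $\sup_t\|M_{\Theta_t1}P_t\|_{L^2\to L^2}\le C$, with no gain in $s/t$; a bounded-but-non-decaying term makes the Schur integral $\int_0^t \|R_tQ_s\|\,\tfrac{ds}{s}$ diverge. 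The paper's inserted $A_t$ is exactly what repairs this: since $A_t$ is a projection, one factors $(\Theta_t1)A_tP_tQ_s = \big(M_{\Theta_t1}A_t\big)\circ\big(P_tQ_s\big)$, with $\|M_{\Theta_t1}A_t\|_{L^2\to L^2}\lesssim 1$ by Lemma~\ref{lemma3.11AAAHK} and $\|P_tQ_s\|_{L^2\to L^2}\lesssim(s/t)^{\mu}$ for $s\le t$ from the usual orthogonality of a nice approximate identity against a CLP family; the product decays. Your operator $M_{\Theta_t1}P_tQ_s$ does not admit such a factorization directly, because $M_{\Theta_t1}$ by itself is not $L^2$-bounded (as you yourself note, $\Theta_t1$ is only a BMO-type object). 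You could close the gap by taking $P_t=\widetilde P_t\circ\widetilde P_t$ for an auxiliary nice approximate identity $\widetilde P_t$, and then factoring through $M_{\Theta_t1}\widetilde P_t$; but as written, the $s\le t$ estimate does not go through, and this is precisely the role of the $A_t$ in the paper's argument (cf.\ Lemma~\ref{l3.10} and Corollary~\ref{c3.11}).
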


 \begin{remark} Here, the constant function $1$ should be interpreted in the matrix-valued sense,
i.e., as the $m\times m$ identity matrix.
\label{remark4.4}\end{remark}

We defer the proof  of Theorem \ref{3.T1theorem} to an appendix (Section \ref{st1}).



\begin{lemma}\label{l3.7}
Suppose that there exists $\eta\in(0,1)$, $\epsilon>0$ small and $C<\infty$ such that for every dyadic cube $Q\in\mathbb{R}^n$, and
for each cone $\Gamma^{\epsilon}$ of aperture $\epsilon$,
there is a family $\{Q_j\}_j$ of non-overlapping  dyadic sub-cubes of $Q$,  
with \begin{equation}\label{eq3.18}
\sum_j|Q_j|\leq(1-\eta)\left|Q\right|
\end{equation}
and
\begin{equation}
\int_E\left(\ADQG(\Theta_t 1)(x)\right)^{p}dx\,\leq\, C\,|Q|
\label{3.condlemma}\end{equation}
where $E:=Q\setminus\{\cup_j Q_j\}$.
Then \begin{equation*}\displaystyle\sup_Q\frac{1}{|Q|}\int_0^{\ell(Q)}\!\!\int_Q|\Theta_t1(x)|^2\frac{dxdt}{t}\leq C\end{equation*}
\label{3.lemma}\end{lemma}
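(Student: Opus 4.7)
The plan is to combine the arguments of Steps 2 and 3 of Section~\ref{c2} (the proofs of Lemma~\ref{1.lemma} and Sublemma~\ref{1.sublemma}), adapted to the dyadic conical setting, since the absence of pointwise kernel bounds precludes a direct analogue of \eqref{1.pointwisecontinuity}. Fix a cone $\Gamma^\epsilon\subset\mathbb{C}^m$; the desired Carleson estimate \eqref{3.carlessoncondition} will follow by summing over a finite cover of $\mathbb{C}^m$ by such cones. First, Chebyshev's inequality applied to \eqref{3.condlemma} yields $N$, large but independent of $Q$, so that $F_N:=\{x\in E:\ADQG(\Theta_t 1)(x)\leq N\}$ satisfies $|F_N|\geq(\eta-C/N^p)|Q|\geq(\eta/2)|Q|=:\beta|Q|$; hence $\tilde\Omega_N:=Q\setminus F_N$ has $|\tilde\Omega_N|\leq(1-\beta)|Q|$.

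For $\gamma=2^{-m}\in(0,1)$, I introduce the truncated dyadic conical square function
\[
\tilde g_{Q,\gamma}(x):=\left(\iint_{\GD_Q(x)\cap\{\gamma\leq t\leq 1/\gamma\}}|\Theta_t 1(y)|^2\,\mathbbm{1}_{\Gamma^\epsilon}(\Theta_t 1(y))\,\frac{dy\,dt}{t^{n+1}}\right)^{1/2},
\]
and set $k(\gamma):=\sup_{Q\in\DD}|Q|^{-1}\int_Q\tilde g_{Q,\gamma}^2\,dx$. Finiteness of $k(\gamma)$ for each $\gamma>0$ is routine from the uniform bound $\iint_{U_{Q'}}|\Theta_t 1|^2\,dy\,dt/t\lesssim |Q'|$, which I would obtain by decomposing $1=\mathbbm{1}_{16Q'}+\sum_{j\geq 1}\mathbbm{1}_{S_j(Q')}$ and summing with \eqref{3.supl2bound} and \eqref{3.rcondition}. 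Since $\tilde g_{Q,\gamma}$ is a step function constant on dyadic cubes of side $\gamma$, and dominated pointwise by $\ADQG(\Theta_t 1)$, the super-level set $\Omega_{N,\gamma}:=\{x\in Q:\tilde g_{Q,\gamma}(x)>N\}$ is a union of dyadic subcubes of $Q$ satisfying $|\Omega_{N,\gamma}|\leq|\tilde\Omega_N|\leq(1-\beta)|Q|$.

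I Whitney-decompose $\Omega_{N,\gamma}=\bigcup_j W_j$ with $\dist(W_j,F_{N,\gamma})\approx\ell(W_j)$, where $F_{N,\gamma}:=Q\setminus\Omega_{N,\gamma}$. Splitting the integral $\int_Q\tilde g_{Q,\gamma}^2\,dx=\int_{F_{N,\gamma}}\tilde g_{Q,\gamma}^2\,dx+\sum_j\int_{W_j}\tilde g_{Q,\gamma}^2\,dx$ and using $\tilde g_{Q,\gamma}\leq N$ on $F_{N,\gamma}$, I bound each $\int_{W_j}\tilde g_{Q,\gamma}^2\,dx$ by a Fubini ancestor/descendant decomposition. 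The descendants $Q'\in\DD(W_j)$ contribute $\int_{W_j}\tilde g_{W_j,\gamma}^2\,dx\leq k(\gamma)|W_j|$; the strict ancestors $W_j^{(k)}$, $k\geq 1$, contribute $|W_j|\sum_{k\geq 1}\iint_{U_{W_j^{(k)}}}|\Theta_t 1|^2\mathbbm{1}_{\Gamma^\epsilon}\,dy\,dt/t^{n+1}$. Selecting $x_j\in F_{N,\gamma}$ with $\dist(x_j,W_j)\approx\ell(W_j)$, I have $x_j\in W_j^{(k)}$ for all $k\geq k_0(n)$, whence $U_{W_j^{(k)}}\subset\GD_Q(x_j)$ and the high-$k$ tail is at most $|W_j|\,\tilde g_{Q,\gamma}(x_j)^2\leq N^2|W_j|$; the $O(1)$ low-$k$ terms absorb into $C|W_j|$ via the uniform bound above. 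Summing,
\[
\int_Q\tilde g_{Q,\gamma}^2\,dx\leq N^2|Q|+(k(\gamma)+N^2+C)(1-\beta)|Q|,
\]
so $\beta k(\gamma)\leq 2N^2+C$, uniformly in $\gamma$. Letting $\gamma\to 0^+$ gives the cone-restricted Carleson bound, and summing over the finite covering of $\mathbb{C}^m$ yields \eqref{3.carlessoncondition}.

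The main obstacle is the missing pointwise continuity of $\Theta_t 1$: in Section~\ref{c2} one transferred control from $x_j\in F_{N,\gamma}$ to the whole Whitney cube $W_j$ at scales $t\gtrsim\ell(W_j)$ using \eqref{1.pointwisecontinuity}, an estimate no longer available. The dyadic conical framework sidesteps this because every ancestor Whitney box $U_{W_j^{(k)}}$ with $k\geq k_0$ lies entirely inside $\GD_Q(x_j)$, so the ancestor tail is dominated by $\tilde g_{Q,\gamma}(x_j)^2$ directly, with no pointwise approximation required. The remaining technical work is keeping the Fubini bookkeeping (descendants versus ancestors, truncation-scale atoms) clean, and absorbing the finitely many near-boundary ancestor terms into the $L^2$-based uniform estimate obtained from the hypercontractive hypothesis \eqref{3.rcondition}.
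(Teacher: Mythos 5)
Your overall strategy is the right one---the Chebyshev step, a truncation parameter for the a priori qualitative bound, a recursive decomposition, and the hiding argument $K(\gamma)\leq N^2+(1-\beta)K(\gamma)+C$ all match the paper's Steps 2 and 3. You also correctly identify the two obstacles that the dyadic-cone framework creates: possible non-openness of the bad set, and the loss of pointwise continuity of $\Theta_t1$. But your resolution of the second obstacle contains a genuine error.

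The gap is in the sentence ``Selecting $x_j\in F_{N,\gamma}$ with $\dist(x_j,W_j)\approx\ell(W_j)$, I have $x_j\in W_j^{(k)}$ for all $k\geq k_0(n)$.'' This is false for a Whitney decomposition. A dyadic ancestor $W_j^{(k)}$ is not a concentric dilate of $W_j$: the cube $W_j$ may sit in a corner of $W_j^{(k)}$, so a point $x_j$ at distance $\approx\ell(W_j)$ from $W_j$ on the wrong side never enters any ancestor, no matter how large $k$ is. (One-dimensional example with standard dyadic intervals: $W_j=[0,1)$, all ancestors are $[0,2^k)$, and $x_j=-1/2$ belongs to none of them.) The Whitney property $\dist(W_j,F_{N,\gamma})\approx\ell(W_j)$ controls Euclidean distance, not dyadic ancestry, and the tail $T_j=\sum_{k\geq1}\iint_{U_{W_j^{(k)}}}|\Theta_t1|^2\mathbbm{1}_{\Gamma^\epsilon}\,dy\,dt/t^{n+1}$ would then be bounded only by a logarithm of $\ell(Q)/\ell(W_j)$, which is useless for the hiding argument. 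So as written, the high-$k$ tail is \emph{not} dominated by $\tilde g_{Q,\gamma}(x_j)^2$.

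The fix, which is what the paper's proof does, is to replace the Whitney decomposition by the \emph{maximal dyadic} decomposition: select the maximal dyadic subcubes $Q_j$ of $Q$ whose interiors lie in $\Omega_{N,\gamma}$ (your observation that $\Omega_{N,\gamma}$ is a union of dyadic cubes of a fixed generation is exactly what makes this decomposition exhaust the set up to measure zero, and this is the paper's answer to the non-openness issue as well). Maximality forces the dyadic \emph{parent} $\hat{Q}_j$ to contain a point $x_1\in F_{N,\gamma}$ in its interior, and then $x_1$ automatically lies in \emph{every} ancestor $Q_j^{(k)}$, $k\geq 1$, since they all contain $\hat{Q}_j$. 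Hence the whole ancestor tail (not just the high-$k$ part) is $\leq \tilde g_{Q,\gamma}(x_1)^2\leq N^2$, and there is no need to absorb any ``low-$k$'' terms separately. With this single change the remainder of your argument goes through, and in fact becomes cleaner than what you wrote. A cosmetic remark: the paper truncates the cone by restricting to the generations $2^{-M}\leq\ell(Q')\leq 2^M$ rather than by slicing in $t$, so that only whole Whitney boxes are kept; your $t$-truncation can bisect a Whitney box, which is harmless but slightly untidy. Also note that your choice $\gamma=2^{-m}$ collides with the ambient $\CC^m$.
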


\begin{sublemma}

Suppose that there exist $N<+\infty$,  and $\beta\in(0,1)$, such that for every cube Q and for each cone $\Gamma^{\epsilon}$
\begin{equation}
|\{x\in Q: G_Q(x)>N\}|\leq(1-\beta)\left|Q\right|\,,
\end{equation}
where
\begin{equation}
G_Q:= 
\ADQG(\Theta_t 1) \,. 
\end{equation}
Then
\begin{equation}
\displaystyle\sup_Q\frac{1}{|Q|}\int_0^{\ell(Q)}\!\!\int_Q|\Theta_t1(x)|^2\frac{dxdt}{t}\leq C.
\label{3.conclusionsublemma}\end{equation}
\label{3.sublemma}\end{sublemma}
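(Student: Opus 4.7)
The strategy I would adopt parallels that of Sublemma \ref{1.sublemma} in Section \ref{c2}, with two key modifications: the vertical square function is replaced by the dyadic conical one, and the pointwise estimates on $\Theta_t 1$ used there must be replaced by averaged $L^2$ estimates. Fix a cone $\Gamma^\epsilon$, and choose a smooth degree-zero homogeneous cutoff $\chi$ on $\CC^m$ satisfying $\mathbbm{1}_{\Gamma^{\epsilon/2}} \leq \chi \leq \mathbbm{1}_{\Gamma^\epsilon}$. For $\gamma \in (0,1)$ I define the truncated conical quantities
\begin{align*}
H_{Q,\gamma}(x)^2 &:= \iint_{\GD_Q(x) \cap \{\gamma < t < 1/\gamma\}} |\Theta_t 1(y)|^2 \, \mathbbm{1}_{\Gamma^{\epsilon/2}}(\Theta_t 1(y)) \, \frac{dy\,dt}{t^{n+1}}, \\
G_{Q,\gamma}(x)^2 &:= \iint_{\GD_Q(x) \cap \{\gamma < t < 1/\gamma\}} |\Theta_t 1(y)|^2 \, \chi(\Theta_t 1(y)) \, \frac{dy\,dt}{t^{n+1}},
\end{align*}
so that $H_{Q,\gamma} \leq G_{Q,\gamma} \leq G_Q$, and the hypothesis yields $|\{x\in Q : G_{Q,\gamma}(x) > N\}| \leq (1-\beta)|Q|$. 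Setting $k(\gamma) := \sup_Q |Q|^{-1} \int_Q H_{Q,\gamma}(x)^2\,dx$ (finite for fixed $\gamma$ as a trivial consequence of \eqref{3.supl2bound}), the goal is to show $\sup_{0<\gamma<1} k(\gamma) < \infty$ and then pass to the limit $\gamma \to 0$.

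The uniform bound will come from a self-improvement inequality obtained via a Whitney decomposition. The set $\Omega_{N,\gamma} := \{x\in Q: G_{Q,\gamma}(x) > N\}$ is open, owing to the smoothing by $\chi$ together with the strong continuity of $t \mapsto \Theta_t 1$ afforded by \eqref{3.supl2bound}, so I perform a dyadic Whitney decomposition $\Omega_{N,\gamma} = \bigcup_j Q_j$ with $\dist(Q_j, F_{N,\gamma}) \approx \ell(Q_j)$, where $F_{N,\gamma} := Q \setminus \Omega_{N,\gamma}$. For $x \in Q_j$, the dyadic cone splits as $\GD_Q(x) = \GD_{Q_j}(x) \,\sqcup\, \bigcup_{Q_j\subsetneq Q' \subset Q} U_{Q'}$, so that $H_{Q,\gamma}(x)^2 = H_{Q_j,\gamma}(x)^2 + T_j(x)^2$, where $T_j(x)$ is the contribution from the ``tower'' above $Q_j$. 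Summing and using $H_{Q,\gamma} \leq G_{Q,\gamma} \leq N$ on $F_{N,\gamma}$ gives
$$\int_Q H_{Q,\gamma}^2\,dx \,\leq\, N^2|Q| + (1-\beta)\,k(\gamma)\,|Q| + S, \qquad S := \sum_j \int_{Q_j} T_j(x)^2 \, dx.$$

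The crux is the bound $S \leq C|Q|$, the conical analogue of Claim \ref{1.claim}. By Fubini,
$$\int_{Q_j} T_j^2\,dx \,=\, |Q_j| \sum_{Q_j \subsetneq Q' \subset Q} \iint_{U_{Q'}} |\Theta_t 1(y)|^2\,\mathbbm{1}_{\Gamma^{\epsilon/2}}(\Theta_t 1(y))\,\frac{dy\,dt}{t^{n+1}}.$$
By the Whitney property I select a reference point $x_j \in F_{N,\gamma}$ with $\dist(x_j,Q_j) \lesssim \ell(Q_j)$, and compare the tower sum to $G_{Q,\gamma}(x_j)^2 \leq N^2$, following the three-region decomposition $Q_j^{(1)}, Q_j^{(2)}, Q_j^{(3)}$ of the proof of Claim \ref{1.claim}. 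The absence of pointwise kernel bounds forces a genuine departure: the pointwise estimate $|\Theta_t 1(x) - \Theta_t 1(x_j)| \lesssim (\ell(Q_j)/t)^\alpha$ of \eqref{1.pointwisecontinuity} is no longer available, and I would replace it by the averaged version
$$\left(\fint_{Q_j} |\Theta_t 1(x) - \Theta_t 1(x_j)|^2\,dx\right)^{1/2} \lesssim \left(\frac{\ell(Q_j)}{t}\right)^\beta, \qquad t \gtrsim \ell(Q_j),$$
to be derived from a CLP resolution $I = \int_0^\infty Q_s^2\,ds/s$, the quasi-orthogonality \eqref{3.Qsbound}, and the off-diagonal decay \eqref{3.annulibound}. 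The hypercontractive estimate \eqref{3.rcondition} and improved integrability \eqref{3.qcondition} supply the local integrability of $\Theta_t 1$ needed to give pointwise meaning to its value at $x_j$ via a Lebesgue-point argument.

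The main obstacle is precisely this tall-part bound: transporting the direction/size dichotomy of Claim \ref{1.claim} to the quasi-continuous regime afforded by purely $L^2$ hypotheses on the kernel. Once $S \leq C|Q|$ is secured, absorbing $(1-\beta)k(\gamma)|Q|$ into the left-hand side gives $k(\gamma) \leq (N^2 + C)/\beta$ uniformly in $\gamma$. Letting $\gamma \to 0$ by monotone convergence and summing over the finite covering $\{\Gamma^{\epsilon/2}_k\}_{k=1}^K$ of $\CC^m \cong \RR^{2m}$ produces
$$\sup_Q \frac{1}{|Q|} \int_Q \bigl(\ADQ(\Theta_t 1)(x)\bigr)^2\,dx \,\leq\, C\,K(\epsilon,m),$$
which, via the Fubini identity $\int_Q (\ADQ F)^2\,dx \approx \int_0^{\ell(Q)}\!\int_Q |F(y,t)|^2\,dy\,dt/t$ (itself a consequence of $|Q'|/t^{n+1} \approx 1/t$ on each Whitney box $U_{Q'}$), is equivalent to the Carleson estimate \eqref{3.conclusionsublemma}.
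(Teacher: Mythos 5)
Your proposal has two genuine gaps, and missing the key structural observation that the paper exploits leads you down a harder and ultimately incomplete path.

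\textbf{Openness of $\Omega_{N,\gamma}$ does not follow as you claim.} You assert that $\Omega_{N,\gamma}$ is open ``owing to the smoothing by $\chi$ together with the strong continuity of $t\mapsto\Theta_t 1$,'' but the obstruction to openness here is not smoothness in the $\CC^m$ variable or in $t$ --- it is that the \emph{dyadic} cone $\GD_Q(x)$ is discontinuous as a function of $x$: it jumps as $x$ crosses a dyadic grid hyperplane. Mollifying with $\chi$ does nothing to repair this, and in the present setting there are no pointwise kernel bounds \eqref{1.pointwisecontinuity} to give continuity of $x\mapsto\Theta_t 1(x)$ either. (The paper flags exactly this in a footnote: ``We cannot just use a standard Whitney covering: since we are working with dyadic cones\ldots\ it is unclear whether the set $\Omega_{N,M}$ is open.'') The paper instead performs a \emph{stopping-time} decomposition: subdivide $Q$ dyadically, stopping at cubes whose interiors miss $F_{N,M}$, obtaining cubes maximal with respect to ${\rm int}(Q_j)\subset\Omega_{N,M}$; one then shows $\Omega_{N,M}=\bigcup_j Q_j$ modulo measure zero by exploiting that $G_{Q,M}$ is constant on interiors of dyadic cells at the finest truncation scale.

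\textbf{The ``tall-part'' bound is not established, and your proposed route to it is not the right one.} You correctly identify that $S=\sum_j\int_{Q_j}T_j^2\,dx\le C|Q|$ is ``the crux,'' but the averaged-$L^2$-smoothness plus Lebesgue-point strategy you sketch is left open and is, in fact, unnecessary. The paper's stopping-time construction makes the tall part essentially free: for $x\in Q_j$, the dyadic tower $\GDT_{Q,M}(x)$ consists of Whitney boxes over the strict dyadic ancestors of $Q_j$, all of which contain the parent $\widehat{Q}_j$; by maximality of $Q_j$, ${\rm int}(\widehat{Q}_j)$ meets $F_{N,M}$, so choosing $x_1\in F_{N,M}\cap{\rm int}(\widehat{Q}_j)$ one has $\GDT_{Q,M}(x)\subset\GD_{Q,M}(x_1)$, hence $\GTQM(x)\le G_{Q,M}(x_1)\le N$. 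Thus $S\le N^2\sum_j|Q_j|\le N^2|Q|$ with no appeal to any kernel regularity, averaged or otherwise --- there is simply no analogue of Claim \ref{1.claim} to prove. This is precisely the payoff of using dyadic cones together with a stopping-time (rather than Whitney) covering: the dyadic tower over any $x\in Q_j$ is \emph{literally} a subset of the full dyadic cone over a good reference point $x_1$, rather than merely comparable to it. Without this observation, and with no pointwise kernel bounds available, your argument does not close.
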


The scheme of our proof now follows the  three-step argument of Section \ref{c2}.

\subsection{Step 1:  Hypotheses of Theorem \ref{3.maintheorem} 
imply hypotheses of Lemma \ref{3.lemma}}\label{c4.s1}

\begin{proof}  We fix a dyadic cube $Q$,  and a cone 
$$\Gamma^{\epsilon}=\left\{z\in\mathbb{C}^m:\left|\frac{z}{|z|}-\nu\right|<\epsilon\right\}\,,$$
with aperture $\epsilon$, and unit direction vector
$\nu\in\mathbb{C}^m$.  
As above, we define the usual dyadic averaging operator, by setting
$A_{t}f(x):=|Q(x,t)|^{-1}\int_{Q(x,t)}f(y)dy$, where $Q(x,t)$ is the 
 minimal dyadic sub-cube of $Q$, containing x, with side length at least t.

Assuming the hypotheses of Theorem \ref{3.genmatrixtheorem}, we then obtain by Lemma
\ref{l2.6} above, that for $\epsilon$ small enough, we can
construct the required family $\{Q_j\}$ of dyadic sub-cubes of $Q$, satisfying
 \eqref{eq3.18}, as 
well as \begin{equation}\label{eq3.23}
|\Theta_t1(x)|^2\mathbbm{1}_{\Gamma^{\epsilon}}\big(\Theta_t1(x)\big)\leq 
4|\Theta_t1(x) A_{t}b_Q(x)|^2\,,\qquad \forall (x,t) \in E_Q^*:= R_Q\setminus
\left(\bigcup_j R_{Q_j}\right)\,.
\end{equation}

With \eqref{eq3.23} in hand, we may now verify the second condition of the lemma, \eqref{3.condlemma}.  We first note that for $x\in E =Q\setminus (\cup_j Q_j)$, 
if $x\in Q'\in \DD(Q)$,  then $Q'$ is not contained in any $Q_j$.  
Thus, for $x\in E$, by definition of the dyadic cones, we have
$\GD_Q(x)\subset E_Q^*$. 
Consequently, by \eqref{eq3.23},
\begin{equation*}\int_E\left(\ADQG (\Theta_t 1)\right)^p dx \lesssim
\int_Q \left( 
\int\!\!\!\int_{\GD_Q(x)}|\Theta_t1(y) A_{t}b_Q(y)|^2
\frac{dydt}{t^{n+1}}\right)^{p/2} dx\,.
\end{equation*}
We now use the Coifman-Meyer trick to write
\begin{multline}\label{eq3.remainderdef}\Theta_t1 A_{t}b_Q = \big((\Theta_t1) A_{t}-(\Theta_t1)A_tP_t\big) b_Q\,+
 \big((\Theta_t1) A_{t}P_t-\Theta_t\big) b_Q\,
+\, \Theta_t b_Q\\[4pt]=: R^{(1)}_t b_Q \,+\,R_t^{(2)}b_Q\,+\,\Theta_tb_Q\,,
\end{multline}
where as usual $P_t$ denotes a nice approximate identity operator
(cf. Definition \ref{Ptdefinition} above).
By \eqref{3.thetabQcondition},
the contribution of the term $\Theta_tb_Q$ gives the desired bound.

To treat the two ``remainder terms" $R_t^{(1)}b_Q$ and $R_t^{(2)}b_Q$, 
we first recall the following. 
\begin{proposition}\cite{C-UMP}
Let $T$ be a sublinear operator satisfying
$$T:L^2(\mathbb{R}^n,v)\rightarrow L^2(\mathbb{R}^n,v)\,, \qquad 
\forall v\in A_{2/r}\,.$$ Then
$T:L^p(\mathbb{R}^n)\rightarrow L^p(\mathbb{R}^n), \ for \ p>r.$
\end{proposition}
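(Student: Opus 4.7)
The statement is a ``limited range'' version of Rubio de Francia's extrapolation theorem, and the natural strategy is to reduce the desired unweighted $L^p$ bound to the weighted $L^2$ hypothesis by constructing an appropriate $A_{2/r}$ weight via iteration of the Hardy--Littlewood maximal operator $\mathcal{M}$. First observe that since $r<2$ we have $2/r>1$, so $A_1\subset A_{2/r}$; consequently it will suffice to produce weights in $A_1$ with appropriate normalization. The key tool is the Rubio de Francia iteration: for any $s>1$ and $0\leq h\in L^s(\mathbb{R}^n)$, setting
\[
Rh := \sum_{k=0}^{\infty}\frac{\mathcal{M}^k h}{\big(2\|\mathcal{M}\|_{L^s\to L^s}\big)^{k}}
\]
yields a nonnegative majorant $Rh\geq h$ with $\|Rh\|_{L^s}\leq 2\|h\|_{L^s}$ and $\mathcal{M}(Rh)\leq 2\|\mathcal{M}\|_{L^s\to L^s}\cdot Rh$, so that $Rh\in A_1$ with $A_1$ constant independent of $h$.

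For the case $p\geq 2$, the plan is to argue by $L^{p/2}$ duality. Since $p/2\geq 1$,
\[
\|Tf\|_{L^p}^{2} = \big\|(Tf)^2\big\|_{L^{p/2}} = \sup_{h} \int_{\mathbb{R}^n} |Tf|^2\, h\,dx,
\]
where the supremum is over $0\leq h\in L^{(p/2)'}$ with $\|h\|_{L^{(p/2)'}}\leq 1$. Apply the iteration in $L^{(p/2)'}$ (which is valid because $(p/2)'>1$) to produce $w=Rh\in A_1\subset A_{2/r}$ with $h\leq w$ and $\|w\|_{L^{(p/2)'}}\lesssim 1$. Plugging into the weighted hypothesis and using H\"older's inequality gives
\[
\int |Tf|^2 h\,dx \leq \int |Tf|^2 w\,dx \leq C\int |f|^2 w\,dx \leq C\|f\|_{L^p}^{2}\|w\|_{L^{(p/2)'}} \lesssim \|f\|_{L^p}^{2},
\]
and taking the supremum in $h$ yields the desired bound.

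The case $r<p<2$ is the main obstacle, because $p/2<1$ prevents the direct $L^{p/2}$ duality from being used. The plan here is to run the Rubio de Francia iteration on a function built from $f$ itself: for an appropriate exponent $\beta>0$, produce a weight $u=R\big(|f|^{p\beta}\big)\in A_1$, and combine it via H\"older's inequality at the conjugate exponents $2/p$ and $2/(2-p)$ to split $\int |Tf|^p$ into a factor of the form $\int |Tf|^2 v$ against a controllable factor involving $u$. The weight $v$, constructed as a suitable power product of $u$ and $|f|^{2-p}$, must be shown to lie in $A_{2/r}$ (here the inequality $p>r$ enters in a non-trivial way, guaranteeing that the exponents can be balanced so that $v$'s $A$-class is no worse than $A_{2/r}$), and the hypothesis is then invoked to pass from $\int |Tf|^2 v$ to $\int |f|^2 v$; the remaining factors are arranged to telescope to $\|f\|_{L^p}^p$. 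Verifying that the weight really belongs to $A_{2/r}$ and selecting $\beta$ correctly is the subtle point, and is precisely the content of the limited range extrapolation machinery developed in \cite{C-UMP}; no modification is needed for the fact that $T$ is merely sublinear, since only $|Tf|^2$ enters the estimates.
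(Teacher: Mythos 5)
The paper does not actually prove this Proposition; it simply cites \cite{C-UMP}, so there is no in-paper argument to compare against. Evaluating your attempt on its own terms: the case $p>2$ is handled correctly by $L^{p/2}$ duality and the Rubio de Francia iteration in $L^{(p/2)'}$ (and the boundary case $p=2$ is trivial, since $v\equiv 1\in A_{2/r}$). For $r<p<2$, however, you offer only a sketch and explicitly punt to \cite{C-UMP}. The sketch as written also has a real wrinkle: a weight of the form ``a power product of $u=R(|f|^{p\beta})$ and $|f|^{2-p}$'' will in general fail to belong to $A_{2/r}$ (or even $A_\infty$), since the raw factor $|f|^{2-p}$ is not itself a Muckenhoupt weight; in the downward direction the iteration must also absorb powers of $|Tf|$, and the exponents must be arranged so that every non-iterated factor is controlled. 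So as stated, the $r<p<2$ branch is a gap.

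There is a cleaner route that handles all $p>r$ in one stroke, and it is essentially what \cite{C-UMP} observes: apply the \emph{classical} full-range Rubio de Francia extrapolation theorem to the pairs $(\tilde f,\tilde g):=(|f|^r,|Tf|^r)$. The hypothesis $\|Tf\|_{L^2(v)}\le C\|f\|_{L^2(v)}$ for all $v\in A_{2/r}$ rewrites, after raising both sides to the power $r$, as $\|\tilde g\|_{L^{2/r}(v)}\le C^r\|\tilde f\|_{L^{2/r}(v)}$ for all $v\in A_{2/r}$, which is exactly the classical extrapolation hypothesis at exponent $p_0=2/r>1$. Classical extrapolation then yields $\|\tilde g\|_{L^q(v)}\le C'\|\tilde f\|_{L^q(v)}$ for every $q>1$ and $v\in A_q$; taking $v\equiv 1$ and unwinding gives $\|Tf\|_{L^{rq}}\lesssim\|f\|_{L^{rq}}$, i.e.\ $T:L^p\to L^p$ for all $p=rq>r$. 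This rescaling avoids the case-splitting on $p\gtrless 2$, sidesteps the delicate weight construction you allude to, and makes the threshold $p>r$ completely transparent. Sublinearity of $T$ is irrelevant throughout, as you note, since extrapolation acts on families of pairs and not on the operator.
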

By the Proposition, and \eqref{3.pbQcondition}, we are left to prove, for $i=1,2$, that
\begin{equation}\label{eq3.24}
\int_{\mathbb{R}^n}|\AD(R_t^{(i)}f)(x)|^2v(x)dx\leq C\int_{\mathbb{R}^n}|f(x)|^2v(x)dx\,, 
\qquad  v\in A_{2/r}\,.
\end{equation}
By definition of the dyadic cones used to construct $\AD$, the left side of \eqref{eq3.24} equals
$$\int_{\RR^n}\sum_{Q\in\DD}1_Q(x)\int\!\!\!\int_{U_Q} |R_t^{(i)}f(y)|^2\frac{dydt}{t^{n+1}}\,v(x) \,dx
\,\leq\,\int_0^\infty\!\!\int_{\RR^n}t^{-n}\!\int_{|x-y|<Ct} |R_t^{(i)}f(y)|^2 dy\, v(x)\,dx \,\frac{dt}{t}$$
Thus, by a standard orthogonality argument, it suffices to prove that for some $\beta_1>0$, we have
\begin{equation}\label{eq3.25}
\int_{\mathbb{R}^n}t^{-n}\!\int_{|x-y|<Ct}|R^{(i)}_tQ_sh(y)|^2dy\,v(x)dx\leq C \min\left(\frac{s}{t},\frac{t}{s}\right)^{\beta_1}\int_{\mathbb{R}^n}|h(x)|^2v(x)dx\,,
\end{equation}
where $\{Q_s\}_{s>0}$ is a CLP family as in Definition \ref{3.lpfamily}, which in addition satisfies
the weighted $L^2$ estimate
$$\int_0^\infty\!\!\int_{\RR^n} |Q_s f(x)|^2 v(x)\frac{dxdt}{t}\,
\leq\, C\int_{\mathbb{R}^n}|f(x)|^2v(x)\,dx\,, \qquad v\in A_{2/r}\,.$$
To this end, we begin by showing that \eqref{eq3.25} holds in the unweighted case $v\equiv 1$,
i.e., that for some $\beta_0>0$, we have
\begin{equation}\label{eq3.26}
\int_{\mathbb{R}^n}t^{-n}\!\int_{|x-y|<Ct}|R^{(i)}_tQ_sh(y)|^2dy\,dx\,\approx
\int_{\mathbb{R}^n}|R^{(i)}_tQ_sh(y)|^2dy 
\lesssim \min\left(\frac{s}{t},\frac{t}{s}\right)^{\beta_0}\int_{\mathbb{R}^n}|h(x)|^2dx\,.
\end{equation}
In fact, more generally, we shall prove the following
\begin{lemma}\label{l3.10}  Let $H$ be a subspace of
$L^2(\rn,\CC^m),\, m\geq 1$, and suppose that $\Theta_t$ satisfies \eqref{3.supl2bound},
\eqref{3.annulibound}, and, for every $h\in H$, \eqref{3.Qsbound}.  Let $R_t^{(i)},\, i=1,2$,
be defined as in \eqref{eq3.remainderdef}.  Then there is some $\beta_0>0$, such that 
the bound \eqref{eq3.26} holds for all $h\in H$, and for $i=1,2$.  
\end{lemma}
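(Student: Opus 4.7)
The task is to prove the bound $\|R_t^{(i)} Q_s h\|_{L^2}^2 \lesssim \min(s/t, t/s)^{\beta_0}\|h\|_{L^2}^2$ for $i=1,2$ and $h \in H$, with a uniform $\beta_0>0$. My plan is to treat the two scale regimes $s \leq t$ and $s > t$ separately.

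\textbf{First I would handle $s \leq t$.} For the second remainder, $R_t^{(2)} Q_s h = (\Theta_t 1) A_t P_t Q_s h - \Theta_t Q_s h$: the subtracted summand is immediately bounded by $(s/t)^\beta \|h\|_{L^2}$ by hypothesis \eqref{3.Qsbound} (this is the only place the hypothesis $h \in H$ is used). The first summand I would estimate by invoking Lemma \ref{lemma3.11AAAHK} with $b\equiv I$ to get $\|(\Theta_t 1) A_t f\|_{L^2} \lesssim \|f\|_{L^2}$, reducing matters to the Fourier-multiplier bound $\|P_t Q_s\|_{2\to 2} \lesssim (s/t)^\sigma$; this latter follows from the size/decay estimates on $\widehat{\Phi}$ and $\widehat{\psi}$ of Definitions \ref{Ptdefinition} and \ref{3.lpfamily}, by splitting into the ranges $|\xi| \leq 1/t$, $1/t < |\xi| \leq 1/s$, and $|\xi| \geq 1/s$. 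For the first remainder $R_t^{(1)} Q_s = (\Theta_t 1) A_t (I - P_t) Q_s$, the piece involving $P_t$ is handled exactly as above, while the piece $(\Theta_t 1) A_t Q_s h$ I would treat by the Calder\'on reproducing formula $Q_s h = \int_0^\infty Q_s Q_r^2 h \,dr/r$, bounding each slice via Lemma \ref{lemma3.11AAAHK} combined with the quasi-orthogonality $\|Q_s Q_r\|_{2\to 2} \lesssim \min(s/r, r/s)^\sigma$ of the CLP family, then integrating in $r$ using the CLP square-function estimate to extract quantitative decay in $s/t$.

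\textbf{For the case $s > t$}, the key inputs are the smoothness of $Q_s$ at scale $s$ together with the cancellation $R_t^{(1)} 1 = R_t^{(2)} 1 = 0$. Directly from $\|s\nabla Q_s h\|_{L^2} \lesssim \|h\|_{L^2}$ one deduces $\|(I - P_t) Q_s h\|_{L^2} \lesssim (t/s)^\sigma \|h\|_{L^2}$ (Fourier) and $\|(I - A_t) Q_s h\|_{L^2} \lesssim (t/s) \|h\|_{L^2}$ (Poincar\'e on the dyadic cubes of side $\sim t$). For $R_t^{(1)} Q_s h = (\Theta_t 1) A_t (I - P_t) Q_s h$ the desired bound is then immediate from Lemma \ref{lemma3.11AAAHK}. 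For $R_t^{(2)}$ I would telescope
\[
R_t^{(2)} Q_s h = (\Theta_t 1) A_t(P_t - I) Q_s h + (\Theta_t 1)(A_t - I) Q_s h + \big((\Theta_t 1) Q_s h - \Theta_t Q_s h\big),
\]
handling the first two summands by the preceding Fourier/Poincar\'e estimates together with Lemma \ref{lemma3.11AAAHK}, and the third --- a commutator with kernel $\int \Psi_t(\cdot, y)[Q_s h(\cdot) - Q_s h(y)]\, dy$ --- by an annular decomposition: apply the off-diagonal decay \eqref{3.annulibound} on each dyadic annulus around the base cube and pair it with the $L^2$-smoothness of $Q_s h$ to gain a factor $(t/s)^{\beta''}$ per annulus, while the $2^{-j(n+2+\beta)/2}$ prefactor in \eqref{3.annulibound} ensures convergence of the sum over annuli.

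\textbf{The main obstacle} I anticipate is the commutator $(\Theta_t 1) Q_s h - \Theta_t Q_s h$ in the $s > t$ regime: because Section \ref{c4} works under purely integral off-diagonal decay --- and \emph{not} pointwise kernel size/smoothness bounds --- the classical David--Journ\'e style commutator estimate must be replaced by a careful annular/Sobolev argument that extracts the quantitative $(t/s)^{\beta_0}$ gain from the interplay of \eqref{3.annulibound} with the CLP smoothness. A secondary technical point, in the regime $s \leq t$, is the estimate for $(\Theta_t 1) A_t Q_s$: since $A_t$ is a dyadic (rather than convolution) averaging and $\Theta_t 1$ is not assumed pointwise bounded, one cannot simply invoke a Fourier-multiplier argument, and the Calder\'on-reproducing/quasi-orthogonality approach outlined above is required.
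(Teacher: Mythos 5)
Your overall framework (splitting into the regimes $s\leq t$ and $s>t$, exploiting $R_t^{(i)}1=0$) is the right one, but there are two genuine gaps, and in both cases the paper sidesteps the difficulty with a simpler device that you have missed.

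\textbf{Gap 1 ($s\leq t$, the term $(\Theta_t 1)A_tQ_sh$ in $R_t^{(1)}$).} The Calder\'on-reproducing argument you outline cannot produce decay in $s/t$. Writing $Q_sh=\int_0^\infty Q_sQ_r^2h\,dr/r$ and using $\|Q_sQ_r\|_{2\to2}\lesssim\min(s/r,r/s)^\sigma$ only controls the $s$--$r$ interaction; the variable $t$ enters only through $(\Theta_t1)A_t$, for which all you have is the uniform $L^2$ bound from Lemma~\ref{lemma3.11AAAHK}. After integrating in $r$ you land on $\|(\Theta_t1)A_tQ_sh\|_2\lesssim\|h\|_2$ with no smallness. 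The paper avoids this entirely by observing that $A_t$ is a projection, $A_t^2=A_t$, so that $R_t^{(1)}=(\Theta_t1)A_t(A_t-P_t)$; then the known fact $\|A_tQ_s\|_{2\to2}+\|P_tQ_s\|_{2\to2}\lesssim(s/t)^\mu$ for $s\leq t$ (which encodes exactly the cancellation of $Q_s$ against averaging at the larger scale $t$) finishes the estimate. You would need that fact in any case, and with it the projection identity makes the reproducing formula unnecessary.

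\textbf{Gap 2 ($s>t$, the term $(\Theta_t1)(A_t-I)Q_sh$ in your telescoping of $R_t^{(2)}$).} Your decomposition detaches the multiplication operator $(\Theta_t1)$ from the adjacent averaging $A_t$. In Section~\ref{c4} there is no $L^\infty$ bound on $\Theta_t1$ --- that is precisely the content of ``without pointwise kernel bounds'' --- and Lemma~\ref{lemma3.11AAAHK} only bounds the composite $(\Theta_t1)A_t$ (or more generally $(\Theta_t1)\mathcal{A}_t$ for a nonnegative averaging $\mathcal{A}_t$), not $(\Theta_t1)$ acting alone. So $\|(\Theta_t1)(A_t-I)Q_sh\|_2$ cannot be estimated as you propose, regardless of how small $\|(A_t-I)Q_sh\|_2$ is. The paper's route for the entire case $t\leq s$ is to check that $R_t^{(i)}$ ($i=1,2$) inherits \eqref{3.supl2bound} and \eqref{3.annulibound} (via Lemma~\ref{lemma3.11AAAHK}) and kills constants, and then to apply \cite[Lemma 3.5]{AAAHK} directly, obtaining $\|R_t^{(i)}Q_sh\|_2\lesssim t\|\nabla Q_sh\|_2\lesssim(t/s)\|h\|_2$ in one step. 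That black-box lemma plays exactly the role of the ``careful annular/Sobolev argument'' you anticipate as the main obstacle --- applied to the full remainder operator rather than to a commutator obtained by further (and, here, illegitimate) telescoping. If you want to build this estimate from scratch rather than cite the lemma, you must keep $(\Theta_t1)$ and $A_t$ together throughout.
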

We remark that for our purposes at present, we simply take $H$ to be all of $L^2(\rn,\CC^m)$,
but we record the more general version stated above, as well as the following
Corollary, for future reference.  
\begin{corollary}\label{c3.11}  Under the hypotheses of Lemma \ref{l3.10}, for $i=1,2$,
we have the square function bound
$$\iint_{\uh}|R^{(i)}_t h(x)|^2 \frac{dxdt}{t} \leq C \|h\|_2^2\,,\qquad \forall h\in H\,.$$
\end{corollary}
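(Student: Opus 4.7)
The plan is to derive the square function bound on $R_t^{(i)}$ from the quantitative almost-orthogonality estimate \eqref{eq3.26} of Lemma \ref{l3.10}, combining it with the Calder\'on reproducing formula associated to a CLP family $\{Q_s\}$ and a Cotlar--Schur style argument on $(0,\infty)$. The heart of the matter is that \eqref{eq3.26} provides precisely the off-diagonal decay in the $(s,t)$ variables needed to run a quasi-orthogonality/Schur test.

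First, I would fix $h\in H$ and decompose it via a CLP family (Definition \ref{3.lpfamily}) as $h = \int_0^\infty Q_s^2 h\,\frac{ds}{s}$, with convergence in $L^2(\rn)$. Applying $R_t^{(i)}$ under the integral, and using Minkowski's inequality in $L^2(\rn)$ together with \eqref{eq3.26} read as an operator norm estimate $\|R_t^{(i)} Q_s\|_{L^2\to L^2}\leq C\min(s/t,t/s)^{\beta_0/2}$, one obtains
\[
\|R_t^{(i)} h\|_{L^2(\rn)}\ \leq\ C\int_0^\infty \min\!\left(\tfrac{s}{t},\tfrac{t}{s}\right)^{\beta_0/2}\|Q_s h\|_{L^2(\rn)}\,\frac{ds}{s}.
\]
Then I would square, integrate $dt/t$, and apply a Schur estimate on $(0,\infty)$ with the Haar measure $ds/s$: the kernel $K(t,s):=\min(s/t,t/s)^{\beta_0/2}$ satisfies $\int_0^\infty K(t,s)\,\tfrac{ds}{s}+\int_0^\infty K(t,s)\,\tfrac{dt}{t}\leq C(\beta_0)$, since $\beta_0>0$. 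By Cauchy--Schwarz in $s$ with weight $K$, then Fubini and the CLP square function bound,
\[
\iint_{\uh}|R_t^{(i)} h(x)|^2\,\frac{dxdt}{t}\ \leq\ C\int_0^\infty \|Q_s h\|_{L^2}^2\,\frac{ds}{s}\ \leq\ C\|h\|_{L^2(\rn)}^2,
\]
which is the desired estimate.

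I do not anticipate a substantive obstacle; the only bookkeeping point to watch is that Lemma \ref{l3.10} is phrased for inputs lying in $H$, whereas the Schur step above applies the decay bound to $Q_s h$, which a priori need not lie in $H$. In practice, \eqref{eq3.26} encodes a uniform $L^2\to L^2$ operator norm bound on $R_t^{(i)} Q_s$ (the hypotheses on $\Theta_t$ invoked in its proof extend beyond $H$), so the required inequality is available for all $g\in L^2(\rn,\CC^m)$ and the Cotlar--Schur argument closes without further adjustment.
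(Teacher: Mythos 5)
Your outline---Calder\'on reproducing formula with a CLP family, Minkowski's inequality, and a Schur test on the kernel $\min(s/t,t/s)^{\beta_0/2}$---is indeed the ``standard orthogonality argument of Schur's lemma type'' the paper has in mind, so the approach is the right one.

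However, your resolution of the $H$-membership issue is wrong. You assert that the hypotheses invoked in the proof of Lemma \ref{l3.10} ``extend beyond $H$,'' so that \eqref{eq3.26} holds for all $g\in L^2(\rn,\CC^m)$. Tracing the proof of Lemma \ref{l3.10}, this is true for $R_t^{(1)}$ in both regimes, and for $R_t^{(2)}$ in the regime $t\leq s$ (those bounds go through \eqref{eq3.27}, the projection identity $R_t^{(1)}=(\Theta_t 1)A_t(A_t-P_t)$, and the cancellation lemma from \cite{AAAHK}, none of which see $H$). But the regime $s\leq t$ for $R_t^{(2)}=(\Theta_t1)A_tP_t-\Theta_t$ handles the term $\Theta_t Q_s$ via \eqref{3.Qsbound}, and that hypothesis is assumed only for $h\in H$. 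So the Schur step, which feeds $Q_sh$ into the decay estimate, does genuinely need $Q_sh\in H$; the inequality is \emph{not} available for general $g\in L^2$, contrary to your claim. The correct way to close this bookkeeping point is to observe (as is implicit in the ``for some, hence every, CLP family'' wording and is built into both applications) that the CLP family can be chosen to preserve $H$: in Sections \ref{c6} and \ref{s7}, $H$ is a space of gradients $\nabla f$ with $f\in\dot L^2_1$, $Q_s$ is convolution, and $Q_s\nabla f=\nabla Q_s f\in H$. With $Q_sH\subseteq H$ in hand, $Q_sh\in H$ whenever $h\in H$, and your Cotlar--Schur computation closes as written.
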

The Corollary follows from the Lemma by a standard orthogonality
argument of \  ``Schur's lemma" type.  We omit the routine details.

\begin{proof}[Proof of Lemma \ref{l3.10}]
We note that by  \eqref{3.supl2bound} and
\eqref{3.annulibound}, we may invoke
\cite[Lemma 3.11]{AAAHK} (cf. Lemma \ref{lemma3.11AAAHK} above), to deduce that
$\Theta_t 1$ is well defined, and satisfies
\begin{equation}\label{eq3.27}
\sup_{t>0}\|(\Theta_t1) A_t\|_{L^2(\mathbb{R}^n)\to L^2(\mathbb{R}^n)}
 \leq C\,,
\end{equation}  
and therefore also that each $R_t^{(i)}$
satisfies \eqref{3.supl2bound} and
\eqref{3.annulibound}.  Thus, since in addition,
we have that
$R_t^{(i)} 1=0$ for $i=1,2$, we may  invoke \cite[Lemma 3.5]{AAAHK}, to 
obtain\footnote{\cite[Lemma 3.11 and Lemma 3.5]{AAAHK}
are stated for operators satisfying our off-diagonal decay estimate \eqref{3.annulibound}
for all $t\lesssim\ell(Q)$, but the proof in \cite{AAAHK} actually requires only that off-diagonal decay 
bounds hold with $t\approx \ell(Q)$.  Also \cite[Lemma 3.5]{AAAHK} is stated under the hypothesis
that \eqref{3.annulibound} holds with $\beta \geq 2$, but the proof given there actually requires only that $\beta >0$, as we assume here.}  that
$$\|R_t^{(i)}Q_s h\|_2\lesssim t \ \|\nabla Q_s h\|_2\,,\qquad \forall h\in L^2\,,$$
which yields \eqref{eq3.26}, in the case $t\leq s$, with $\beta_0 = 2$, by properties of $Q_s$ 
(cf. Definition \ref{3.lpfamily}).

Consider now the case $s\leq t$ of \eqref{eq3.26}.
Since $A_t$ is a projection operator,
we have
$$R_t^{(1)} = (\Theta_t 1) A_t (A_t-P_t)\,,$$
so that the desired bound for this term, for all $h\in L^2$, follows from
\eqref{eq3.27}, and the well known fact that there is some $\mu>0$ such that
$$\|A_t Q_s\|_{L^2\to L^2} \,+ \,\|P_t Q_s\|_{L^2\to L^2} \,\lesssim \left(\frac{s}{t}\right)^\mu\,,
\qquad s\leq t\,.$$
Similarly, the case $s\leq t$ of \eqref{eq3.26} follows for $R_t^{(2)}$ from the latter fact, and 
the fact that \eqref{3.Qsbound} holds for all $h\in H$.  
\end{proof}

With \eqref{eq3.26} in hand, we return to the proof of \eqref{eq3.25}, and we 
make the following claim:
\begin{claim}
\begin{equation}\label{eq3.28}
\int_{\mathbb{R}^n}t^{-n}\!\int_{|x-y|<Ct}|R^{(i)}_tQ_sh(y)|^2dy\,\tilde{v}(x)dx\leq C \int_{\mathbb{R}^n}|h(x)|^2\tilde{v}(x)dx,\quad \forall \tilde{v}\in A_{2/r}\,,
\end{equation}
with $i=1,2$.
\label{2.claim2}\end{claim}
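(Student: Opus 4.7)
The strategy is to dominate the integrand pointwise by the Hardy–Littlewood maximal function of $|Q_sh|^r$, and then finish by standard $A_{2/r}$-weight theory. The reduction rests on a hypercontractive off-diagonal bound for the remainder operators $R_t^{(i)}$, which is the only step that uses their specific structure.

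\textbf{Step 1 (hypercontractive decay for $R_t^{(i)}$).} First I will verify that $R_t^{(1)}$ and $R_t^{(2)}$ inherit hypothesis (c) from $\Theta_t$, namely
\[
\biggl(\int_{Q^*}|R_t^{(i)}(g\mathbbm{1}_{S_j(Q)})|^2\biggr)^{1/2} \lesssim 2^{-j\mu}\, t^{-n(1/r-1/2)}\|g\|_{L^r(S_j(Q))}\qquad (\ell(Q)\approx t,\; j\geq 0).
\]
For the $\Theta_t$-piece of $R_t^{(2)}$ this is exactly (c). The remaining pieces all have the form $(\Theta_t 1)\cdot T_t$ with $T_t\in\{A_t,\,A_tP_t,\,A_t(I-P_t)\}$, and the kernels of these $T_t$ are supported at scale $\lesssim t$ near the diagonal (since $\Phi$ in Definition \ref{Ptdefinition} is compactly supported). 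Hence $T_t(g\mathbbm{1}_{S_j(Q)})$ vanishes identically on $Q^*$ once $j$ exceeds an absolute constant, and for the finitely many remaining $j$'s the elementary bound $\|T_t g\|_\infty\lesssim t^{-n/r}\|g\|_{L^r}$, combined with the uniform estimate $\|\Theta_t 1\|_\infty\lesssim 1$ from \eqref{eq3.27}, yields the required inequality.

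\textbf{Step 2 (pointwise maximal majorization).} For each $x\in\mathbb{R}^n$, let $Q_t(x)$ be a dyadic cube of side length $\approx t$ containing $x$, and split $g:=Q_sh=\sum_j g\mathbbm{1}_{S_j(Q_t(x))}$. Summing the off-diagonal bound from Step 1 and converting $L^r$-norms into averages yields
\[
\biggl(t^{-n}\!\!\int_{|x-y|<Ct}|R_t^{(i)} Q_s h(y)|^2 dy\biggr)^{1/2}
\lesssim \sum_{j\geq 0} 2^{-j(\mu-n/r)}\biggl(\fint_{2^{j+4}Q_t(x)}|Q_sh|^r\biggr)^{1/r}
\lesssim \bigl[\mathcal{M}(|Q_sh|^r)(x)\bigr]^{1/r},
\]
where the series converges because $\mu>n/r$.

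\textbf{Step 3 (weighted finish).} Squaring, integrating against $\tilde v$, and invoking the Muckenhoupt–Wheeden theorem (which gives $\mathcal{M}:L^{2/r}(\tilde v)\to L^{2/r}(\tilde v)$ precisely when $\tilde v\in A_{2/r}$), followed by the pointwise bound $|Q_s h|\lesssim \mathcal{M}h$ (valid because $\psi$ admits an integrable radially decreasing majorant) and weighted $L^2$-boundedness of $\mathcal{M}$ (since $A_{2/r}\subset A_2$ when $r>1$), we obtain
\[
\text{LHS of \eqref{eq3.28}}
\;\lesssim\; \int \bigl(\mathcal{M}|Q_sh|^r\bigr)^{2/r}\tilde v
\;\lesssim\; \int |Q_sh|^2\tilde v
\;\lesssim\; \int |h|^2\tilde v.
\]

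\textbf{Main obstacle.} The only delicate step is Step 1: pushing the hypercontractive decay through the composite structure of $R_t^{(i)}$. Note that the cancellation $R_t^{(i)}1=0$ is not needed at all here (it was essential in Lemma \ref{l3.10} for the decay in $\min(s/t,t/s)$, but Claim \ref{2.claim2} asks only for a uniform weighted bound). Instead, everything hinges on the scale-$t$ localization of the kernels of $A_t$, $P_t$, and $A_tP_t$, and on the uniform boundedness of $\Theta_t 1$; once those are handled, the remainder is off-the-shelf $A_p$-weight theory.
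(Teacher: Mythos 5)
Your strategy coincides with the paper's: decompose into scale-$t$ dyadic cubes, transfer the hypercontractive estimate (c) to the remainder operators $R_t^{(i)}$, and close with the $L^{2/r}_{\tilde v}$-boundedness of the maximal operator followed by the $A_2$-weighted bound on $Q_s$. You are also right that the cancellation $R_t^{(i)}1=0$ plays no role here (it is used only for the unweighted decay in Lemma~\ref{l3.10}).

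There is, however, a concrete error in Step 1 that needs repair. You invoke ``the uniform estimate $\|\Theta_t 1\|_\infty\lesssim 1$ from \eqref{eq3.27}'', but \eqref{eq3.27} only asserts $\sup_t\|(\Theta_t1)A_t\|_{L^2\to L^2}\leq C$. In this section the whole point is that $\Theta_t$ lacks pointwise kernel bounds, and nothing in the hypotheses guarantees $\Theta_t 1\in L^\infty$. What \eqref{eq3.27} does give (by testing $(\Theta_t1)A_t$ against $\mathbbm{1}_{Q'}$ for dyadic $Q'$ of side $\approx t$) is the local $L^2$-average bound
\begin{equation*}
\int_{Q^*}|\Theta_t 1(y)|^2\,dy\;\lesssim\;|Q^*|\,,\qquad \ell(Q)\approx t\,.
\end{equation*}
Pairing this against your sup-bound $\|T_t(g\mathbbm{1}_{S_j(Q)})\|_{L^\infty(Q^*)}\lesssim t^{-n/r}\|g\|_{L^r(S_j(Q))}$ via Cauchy--Schwarz,
\begin{equation*}
\|(\Theta_t1)\,T_t(g\mathbbm{1}_{S_j(Q)})\|_{L^2(Q^*)}
\;\leq\;\|\Theta_t1\|_{L^2(Q^*)}\,\|T_t(g\mathbbm{1}_{S_j(Q)})\|_{L^\infty(Q^*)}
\;\lesssim\; t^{n/2}\cdot t^{-n/r}\|g\|_{L^r(S_j(Q))}\,,
\end{equation*}
yields exactly the bound needed for the finitely many small $j$. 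This is what the paper compresses into the remark that ``by \eqref{eq3.27}, we may apply \eqref{3.rcondition} to $R_t^{(i)}$, since $A_t$ is a projection, and since $A_t$ and $P_t$ have compactly supported kernels.'' With that substitution, Steps 2 and 3 are correct and match the paper's argument.
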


Interpolating with change of measure (\cite{SW}) between \eqref{eq3.26} and \eqref{eq3.28},
we get \eqref{eq3.25}. Indeed, 
for each $v\in A_{\frac{2}{r}}$, there exist $\tau>0$ such that $v^{1+\tau}\in A_{\frac{2}{r}}$, 
so we choose $\tilde{v}(x)=v^{1+\tau}(x)$.
To our knowledge, the idea of using interpolation with change of measure in this way first appeared in the paper \cite{DRdeF}.
Modulo the proof of the claim, Step 1 is now complete.
\end{proof}

\begin{proof}[Proof of Claim \ref{2.claim2}]

Define $\tilde{h}(x)=Q_sh(x)$. By  properties of $Q_s$ (it is controlled by the maximal operator), 
and since $A_{2/r}\subset A_2$, we have $||\tilde{h}||_{L^2_{\tilde{v}}(\mathbb{R}^n)}\leq C||h||_{L^2_{\tilde{v}}(\mathbb{R}^n)}$. 
Thus, it is enough to prove
\begin{equation*}
\int_{\mathbb{R}^n}t^{-n}\!\int_{|x-y|<Ct}|R_t\tilde{h}(y)|^2dy\,\tilde{v}(x)dx
\,\leq\, C\int_{\mathbb{R}^n}\tilde{h}(x)^2\tilde{v}(x)dx.
\end{equation*}

For $t>0$, let $\DD(t)$ denote the grid of dyadic cubes $P\subset \RR^n$ with length 
$\ell(P)\in (t/2,t]$, and let $P^*$ denote the concentric dilate $P^*= \kappa P$ for some large $\kappa$.
Recall that $S_j(P):= 2^{j+4}P\setminus 2^{j+3} P$.
By property (c) of Definition \ref{3.generalsqfunc}, and since $\tilde{v}\in A_{2/r}$ (which gives us 
an $L^{2/r}_{\tilde{v}}$ bound for the maximal function), we have
\begin{multline}\label{eq3.22}
\left(\int_{\mathbb{R}^n}t^{-n}
\!\int_{|x-y|<Ct}|R_t^{(i)}\tilde{h}(y)|^2dy\,\tilde{v}(x)dx\right)^{1/2} 
=\left(\displaystyle\sum_{P\in\mathbb{D}(t)}\int_Pt^{-n}\!\int_{|x-y|<Ct}|R_t^{(i)}\tilde{h}(y)|^2dy\,\tilde{v}(x)dx\right)^{1/2}
\\[4pt]\leq\, C\left(\displaystyle\sum_{P\in\mathbb{D}(t)}\int_P\frac1{|P^*|}\!\int_{P^*}|R^{(i)}_t
\tilde{h}(y)|^2dy\,\tilde{v}(x)dx\right)^{1/2}\\[4pt]
\leq C\displaystyle\sum_{j=0}^{\infty}\left(\displaystyle\sum_{P\in\mathbb{D}(t)}\int_P
\frac1{|P^*|}\!\int_{P^*}|R^{(i)}_t(\tilde{h}\mathbbm{1}_{S_j(P)})(y)|^2dy\tilde{v}(x)dx\right)^{1/2}\\
\leq C\displaystyle\sum_{j=0}^{\infty}\left(\displaystyle\sum_{P\in\mathbb{D}(t)}\int_P\frac{1}{|P^*|}2^{-2j\mu}t^{-n(\frac{2}{r}-1)}\left(\int_{S_j(P)}|\tilde{h}(y)|^rdy\right)^{2/r}\tilde{v}(x)dx\right)^{1/2}\,,
\end{multline}
where in the last step, 
we have used that by \eqref{eq3.27}, we may apply \eqref{3.rcondition}  to $R^{(i)}_t$, since 
$A_{t}$ is a projection, and since $A_t$ and $P_t$ have  compactly supported kernels. 
In turn,  since $|P^*|\approx t^n$, and since $\mu = n/r +\varepsilon$,
the last expression in \eqref{eq3.22} is comparable to
\begin{multline*}
\sum_{j=0}^{\infty}\left(\displaystyle\sum_{P\in\mathbb{D}(t)}\int_P2^{-2j\epsilon}2^{-2jn/r}t^{-2n/r}\tilde{v}(x)\left(\int_{S_j(P)}|\tilde{h}(y)|^rdy\right)^{2/r}dx\right)^{1/2}\\
\approx\displaystyle\sum_{j=0}^{\infty}\left(\displaystyle\sum_{P\in\mathbb{D}(t)}\int_P2^{-2j\epsilon}\tilde{v}(x)\left(\fint_{S_j(P)}|\tilde{h}(y)|^rdy\right)^{2/r}dx\right)^{1/2}\\
\lesssim\, \sum_{j=0}^{\infty}2^{-j\epsilon}
\left(\displaystyle\sum_{P\in\mathbb{D}(t)}\int_P(\mathcal{M}(|\tilde{h}(x)|^r))^{\frac{2}{r}}\tilde{v}(x)dx\right)^{1/2}\\
\lesssim\,\sum_{j=0}^{\infty}2^{-j\epsilon}
\left(\int_{\mathbb{R}^n}|\tilde{h}(x)|^2\tilde{v}(x)dx\right)^{1/2}\,
\approx\,\left(\int_{\mathbb{R}^n}|\tilde{h}(x)|^2\tilde{v}(x)dx\right)^{1/2}\,,
\end{multline*}
where in the next-to-last step, we have used that the maximal operator $\mathcal{M}$ is bounded
on $L^{2/r}_v$, since $v\in A_{2/r}$.
 The proof of Claim \ref{2.claim2} is now complete.
\end{proof}














\subsection{Step 2:  Hypotheses of Lemma \ref{3.lemma} imply hypotheses of Sublemma \ref{3.sublemma}}\label{c4.s2}

\begin{proof}  Recall that
\begin{equation*}
G_Q:= 
\ADQG(\Theta_t 1)\,. 
\end{equation*}
For a large, but fixed $N$ (to be chosen momentarily) let $\Omega_N:=\{x\in Q:G_Q(x)>N\}$.
If the hypotheses of the lemma hold with $E=Q\setminus\displaystyle\bigcup_j Q_j$,
we then have

\begin{align*}
|\Omega_N|&\leq \sum_j|Q_j|+|\{x\in E: G_Q(x)>N\}|\\
&\leq (1-\eta)\left|Q\right|+\frac{1}{N^p}\int_E\left(\ADQG(\Theta_t 1)(x)\right)^{p}dx\\
&\leq \left[(1-\eta)+\frac{C}{N^p}\right]\left|Q\right| \leq (1-\beta) |Q|\,,
\end{align*}

\noindent for some $\beta>0$, where we obtain the last estimate by choosing  
 $N$ large enough, depending on  $\eta$.

\end{proof}

\subsection{Step 3:  Proof of Sublemma \ref{3.sublemma}}\label{c4.s3}

\begin{proof}

Fix (momentarily) a large $M$, so that
$2^{-M}\in(0,1)$, and given a dyadic cube $Q$,  set
$$\DD_M(Q):= \{Q'\in\DD(Q): 2^{-M}\leq \ell(Q')\leq 2^M\}$$
(of course, the upper bound on $\ell(Q')$ is relevant only if $\ell(Q)>2^M$).  
Define a truncated dyadic cone
$$\GD_{Q,M}(x):=\bigcup_{Q'\in \DD_M(Q):\, x\in Q'} U_{Q'}$$
For each dyadic cube $Q$, set
$$G_{Q,M} (x):= \left(\iint_{\GD_{Q,M}(x)}|\Theta_t 1(y)|^2\,
  \mathbbm{1}_{\Gamma^{\epsilon}}\big(\Theta_t 1(y)\big)
\frac{dy dt}{t^{n+1}}\right)^{1/2}\,.$$
Thus, by definition, $G_{Q,M}\leq G_Q:= 
\ADQG(\Theta_t 1)$, and we note that $G_{Q,M}\to G_Q$, as $M\to\infty$, by monotone convergence.
Now let $N, \,\beta$ be as in the hypotheses.
Setting
\begin{equation}\label{eq3.30a}
\Omega_{N,M}=\Omega_{N,M}(Q):= \{x\in Q: G_{Q,M}>N\}\,,
\end{equation}
we have that
\begin{equation}\label{eq3.31}
|\Omega_{N,M}| \leq |\{x\in Q: G_{Q}>N\}|\leq (1-\beta)|Q|\,,
\end{equation}
by hypothesis.
We set

\begin{equation*}K(M):=\sup_Q\frac{1}{|Q|}\int_Q
\left(G_{Q,M}(x)\right)^2dx.\end{equation*}

\noindent  We observe that, for each fixed $M$, $K(M)$ is finite, by the truncation of the cones defining  $G_{Q,M}$.  Our goal is to show that 
\begin{equation}\label{eq3.30}
\sup_{M<\infty}K(M):= K_0<\infty\,.
\end{equation}  
Indeed, in that case, letting $M \to\infty$, we then obtain by
monotone convergence that, for every cube $Q$,
\begin{multline*}K_0 \,|Q|\, \geq\, 
\int_Q\left(G_{Q}(x)\right)^2dx = \int_Q\int\!\!\!\int_{\GD_{Q}(x)}|\Theta_t 1(y)|^2\,
  \mathbbm{1}_{\Gamma^{\epsilon}}\big(\Theta_t 1(y)\big)
\frac{dy dt}{t^{n+1}} \,dx\\[4pt]
 =\, \int_Q\sum_{Q'\in\DD(Q)}1_{Q'}(x)\int\!\!\!\int_{U_{Q'}}
|\Theta_t 1(y)|^2\,
  \mathbbm{1}_{\Gamma^{\epsilon}}\big(\Theta_t 1(y)\big)
\frac{dy dt}{t^{n+1}} \,dx\\[4pt]
= \, \sum_{Q'\in\DD(Q)}\int_{\ell(Q')/2}^{\ell(Q')}\int_{Q'}
|\Theta_t 1(y)|^2\,
  \mathbbm{1}_{\Gamma^{\epsilon}}\big(\Theta_t 1(y)\big)  \left(t^{-n}\int_{Q'} dx\right)
\frac{dy dt}{t} \\[4pt]
\approx\, \int_0^{\ell(Q)}\int_Q|\Theta_t 1(y)|^2\,
  \mathbbm{1}_{\Gamma^{\epsilon}}\big(\Theta_t 1(y)\big) 
\frac{dy dt}{t}\,.
\end{multline*}
Since the latter bound holds for any cone $\Gamma^\epsilon$ of aperture
$\epsilon$, we may sum over a collection of cones $\{\Gamma^\epsilon_k\}$ covering
$\CC^m$, to obtain \eqref{3.conclusionsublemma}.

Thus, it remains only to prove \eqref{eq3.30}.
We fix a cube Q, and let $\Omega_{N,M}$ be the set defined in \eqref{eq3.30a}.
Our first task is to construct a family of stopping time cubes\footnote{
We cannot just use a standard Whitney covering:  since we are working with dyadic cones 
in the definition of $G_{Q,M}$, it is unclear whether
the set $\Omega_{N,M}$ is open.}
covering $\Omega_{N,M}$, modulo a set of measure zero.   We proceed as follows.
Set $F_{N,M}:= Q\setminus \Omega_{N,M},$
and observe that $F_{N,M}$ has positive measure, by \eqref{eq3.31};  thus, in particular, 
the interior of $Q$ (which we denote
int$(Q)$), meets $F_{N,M}$.  We sub-divide
$Q$ dyadically, stopping the first time that we obtain a cube whose interior misses $F_{N,M}$.
In this way, we obtain a family\footnote{Not to be confused with
the family of cubes in the statement of Lemma \ref{l3.7}.  The present cubes have nothing to do with those.} $\{Q_j\}$ of dyadic sub-cubes of $Q$, which are maximal with 
respect to containment of int($Q_j$) 
in $\Omega_{N,M}$.  Obviously, $\cup_j \, {\rm int}(Q_j) \subset \Omega_{N,M}$,
and we wish to establish the opposite containment, up to a set of measure zero.  More precisely,
let $Z$ denote the set of measure zero consisting of the union of the
boundaries of all cubes in $\DD(Q)$.   We claim that
$$\left(\Omega_{N,M}\setminus Z\right)\subset \cup_j \, {\rm int}(Q_j)\,.$$ 
Suppose not:  then
there is a point $x_0\in\Omega_{N,M}$ which does not belong to the interior of
any  $Q_j$, nor to the boundary of any dyadic cube.  
Thus, the interior of every $Q'\in\DD(Q)$, which contains $x_0$, must meet $F_{N,M}$
(otherwise, one such cube would have been a selected $Q_j$).  
Let $Q_0$ be the unique cube in $\DD(Q)$, with side length
$2^{-M-1}$, which contains $x_0$, so that there is a point $x_1$ in $F_{N,M}\cap {\rm int}(Q_0)$.
Notice that for $Q'\in\DD_M(Q)$, we have that $x_0\in Q'$ if and only if $x_1\in Q'$, since in either case,
we must have $Q_0\subset Q'$.  Consequently, $\GD_{Q,M}(x_0) = \GD_{Q,M}(x_1$), and therefore 
$G_{Q,M}(x_0) = G_{Q,M}(x_1)$, which contradicts that $x_0\in \Omega_{N,M}$ and $x_1\in Q\setminus
\Omega_{N,M}$.  We have therefore shown that
$$\Omega_{N,M} = \bigcup_j Q_j\,,$$
up to a set of measure zero, so by \eqref{eq3.31}, we have
\begin{equation}\label{eq3.33a}
\sum_j|Q_j| \leq (1-\beta)|Q|\,.
\end{equation}

With this covering in hand, we proceed to the proof of \eqref{eq3.30}.   
For convenience of notation, we let $\DDT_M(Q)$ denote the collection
of all $Q'\in\DD_M(Q)$ that are not contained in any $Q_j$,
and given $x\in Q$, we set 
$$\DD_M(Q,x) := \{Q'\in\DD_M(Q):\, x\in Q'\}\,,\qquad \DDT_M(Q,x) := \{Q'\in\DDT_M(Q):\, x\in Q'\}\,.$$
Recalling the definition
of the dyadic cones 
$\GD_{Q,M}$, we observe that for $x\in Q_j$, we have
\begin{multline*}
\GD_{Q,M}(x)\,:=\, \bigcup_{Q'\in \DD_M(Q,x)} U_{Q'}
\,=\, \Big(\bigcup_{Q'\in \DD_M(Q_j,x)} U_{Q'}\Big)\,\,\bigcup\,\,
\Big(\bigcup_{Q'\in \DDT_M(Q,x)} U_{Q'}\Big)\\[4pt]
=:\,\GD_{Q_j,M}(x)\,\,\bigcup\,\, \GDT_{Q,M}(x)\,.
\end{multline*}
We then make the corresponding definitions
$$G_{Q_j,M} (x):= \left(\iint_{\GD_{Q_j,M}(x)}|\Theta_t 1(y)|^2\,
  \mathbbm{1}_{\Gamma^{\epsilon}}\big(\Theta_t 1(y)\big)
\frac{dy dt}{t^{n+1}}\right)^{1/2}\,,$$ and
$$\GTQM (x):= \left(\iint_{\GDT_{Q,M}(x)}|\Theta_t 1(y)|^2\,
  \mathbbm{1}_{\Gamma^{\epsilon}}\big(\Theta_t 1(y)\big)
\frac{dy dt}{t^{n+1}}\right)^{1/2}\,.$$
Then by definition, for $x\in Q_j$, we have
$$\left(G_{Q,M}(x)\right)^2 = \left(G_{Q_j,M}(x)\right)^2+\left(\GTQM(x)\right)^2\,,$$
and also, by the stopping time construction,
\begin{equation}\label{eq3.33}
\GTQM(x) = \GTQM(x_1)\leq G_{Q,M}(x_1) \leq N\,,
\end{equation}
for some $x_1\in Q\setminus \Omega_{N,M}$.
Consequently,
\begin{multline*}
\int_Q\left(G_{Q,M}(x)\right)^2 dx\,= \,\int_{F_{N,M}}\left(G_{Q,M}(x)\right)^2 dx\,+
\, \sum_j\int_{Q_j}\left(G_{Q,M}(x)\right)^2 dx\\[4pt]
\leq\,  N^2 |Q| \,+\, 
\sum_j\int_{Q_j}\left(G_{Q_j,M}(x)\right)^2dx\,+\,\sum_j\int_{Q_j}\left(\GTQM(x)\right)^2\\[4pt]
\leq \,N^2 |Q| \,+\, K(M) \sum_j|Q_j| \,+\, N^2\sum_j|Q_j| \\[4pt]
\leq\, 2N^2 |Q| \,+\, K(M) (1-\beta)|Q|\,,
\end{multline*}
where in the last two inequalities we have used the definition of $K(M)$ and \eqref{eq3.33}, 
and then \eqref{eq3.33a}.
Dividing by $|Q|$,  and then taking the supremum over all dyadic $Q$, 
we obtain
$$K(M) \leq 2N^2 +K(M)(1-\beta) \,\implies\, K(M) \leq 2N^2/\beta\,.$$
\end{proof}

\section{Local Tb Theorem without pointwise kernel bounds, version 2}\label{s4}
In this section, we present a version of the local $Tb$ Theorem for square functions,
which may be applied directly to solve the Kato square root problem.
We continue to use the notation introduce in the previous section, but
we shall modify our hypotheses on the operators $\Theta_t$.

\begin{definition}\label{def4.1}

 We consider a family of operators $\{\Theta_t\}_{t>0}$, taking values in $\CC^m,\,m\geq 1$,
 so that  $\Theta_t := (\Theta_t^{1},\Theta_t^{2},
...,\Theta_t^{m})$, and
for $f=(f^1,f^2,...,f^m)\in L^2(\mathbb{R}^n,\CC^m)$, we set
$$ \Theta_tf:=\sum_{j=1}^m \Theta_t^{j}f^j\,.$$ 
We also define the action of $\Theta_t$ on an $m\times m$ matrix valued function $b =(b_{j,k})$,
in the obvious way, i.e., $\Theta_t b=((\Theta_t b)_1,(\Theta_t b)_2,...,(\Theta_t b)_m)$
is a $\CC^m$ valued function, with
$$\left(\Theta_t b\right)_k:= \sum_{j=1}^m \Theta^j_t b_{j,k}\,.$$
We suppose that $\Theta_t$ satisfies the following properties: 

\noindent
\item[(a)] (Uniform $L^2$ bounds and off-diagonal decay in $L^2$).
\begin{equation}
\displaystyle \sup_{t>0}||\Theta_t f||_{L^2(\mathbb{R}^n)}\leq C||f||_{L^2(\mathbb{R}^n)}\,,
\label{4.2}\end{equation}

\begin{equation}
||\Theta_t f_j||_{L^2(Q)}\leq C2^{-j(n+2+\beta)/2}||f_j||_{L^2(2^{j+1}Q\setminus 2^jQ)}\,, \quad 
\ell(Q)\leq t\leq 2\ell(Q)\,,
\label{4.3}\end{equation} 
\noindent for some $\beta>0$, where $f_j:=f\mathbbm{1}_{2^{j+1}Q\setminus 2^jQ}$.

\smallskip

\noindent(b) (Quasi-orthogonality on a subspace of $L^2$).   There exists a subspace $H$ of 
$L^2(\mathbb{R}^n,\CC^m)$, such that for some (hence every) CLP family $\{Q_s\}$ (cf. 
Definition \ref{3.lpfamily}), 
there is a $\beta>0$ for which we have
\begin{equation}
||\Theta_t Q_sh||_{L^2(\mathbb{R}^n)}\leq C\left(\frac{s}{t}\right)^{\beta}||h||_{L^2(\mathbb{R}^n)}, \quad 
\forall h\in H\,,\,\, \forall s\leq t\,.
\label{4.4}\end{equation}

\end{definition}

\begin{remark}\label{remark 4.1} Let us note that it is only in the quasi-orthogonality
condition (b), that we 
restrict to a sub-space $H\subset L^2(\mathbb{R}^n,\CC^m)$;  in condition (a),
we suppose that the operator is allowed to act on general $C^m$-valued functions
in $L^2$.
\end{remark}

\begin{definition}\label{def4.2}  Given a subspace $H\subset L^2(\mathbb{R}^n,\CC^m)$,
we let $H^m$ denote the subspace of $L^2(\RR^n, \mathbb{M}^m)$ 
(i.e., complex $m\times m$ matrix-valued $L^2$),
consisting of $L^2$ matrix-valued functions $b =(b_{j,k})$ 
for which each column vector $b^k:=(b_{1,k}, b_{2,k},..., b_{m,k})$ belongs to $H$.
\end{definition}

Our main result in this section is the following:  
\begin{theorem}\label{t4.2}
Let $\{\Theta_t\}_{t>0}$, and the subspace $H\subset L^2(\RR^n,\CC^m)$, 
be as in Definition \ref{def4.1} above,
 and suppose that there exist  positive constants $C_0<\infty$, and $\delta>0$, 
and a system $\{b_Q\}\subset H^m$, 
indexed by  dyadic
cubes $Q\subset\mathbb{R}^n$,
such that for each dyadic cube Q:
\begin{equation}
\int_{\mathbb{R}^n}|b_Q(x)|^2dx\leq C_0|Q|,
\label{4.5}\end{equation}
\begin{equation}
\int_0^{\ell(Q)}\!\!\int_Q |\Theta_t b_Q(x)|^2 \frac{dx dt}{t}\leq C_0|Q|,
\label{4.6}\end{equation}
\begin{equation}
\delta\,|\xi|^2 
\,\leq\, Re\left(\xi\cdot\fint_Qb_Q(x) dx 
\, \bar{\xi}\right), \qquad \forall\xi\in\mathbb{C}^m \,.
\label{4.7}\end{equation}
Then
\begin{equation}
\iint_{\mathbb{R}_+^{n+1}}|\Theta_tf(x)|^2\frac{dxdt}{t}\leq C||f||_{L^2(\mathbb{R}^n)}^2,\qquad
\forall f\in H.
\label{4.8}\end{equation}
\end{theorem}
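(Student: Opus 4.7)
The strategy is to follow the three-step scheme developed for Theorem \ref{3.genmatrixtheorem}, but to exploit the fact that the testing functions now lie in $L^2$ (i.e.\ $p=2$) to reduce throughout to \emph{vertical} square functions, as in Section \ref{c2}. First I would prove a $T1$-type reduction: under hypotheses (a) and (b) of Definition \ref{def4.1}, if the Carleson measure estimate
\[
\sup_Q \frac{1}{|Q|}\int_0^{\ell(Q)}\!\!\int_Q |\Theta_t 1(x)|^2\,\frac{dxdt}{t}\,\leq\, C
\]
holds, then \eqref{4.8} follows for every $f\in H$. The argument is the standard Coifman--Meyer paraproduct decomposition $\Theta_t = \Theta_t 1\cdot A_t +(\Theta_t - \Theta_t 1\cdot A_t)$: the first piece is handled by Carleson's lemma together with the assumed Carleson measure bound, while the ``orthogonal'' remainder $(\Theta_t - \Theta_t 1\cdot A_t)$ is treated via the reproducing formula $f=\int_0^\infty Q_s^2 f\,ds/s$ and the quasi-orthogonality bound \eqref{4.4} applied to $Q_s f$. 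For this last step the natural assumption is that the CLP family $\{Q_s\}$ preserves $H$, which is automatic in the intended applications (for instance when $H$ is a space of gradient fields).

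With this reduction in place, the problem becomes the Carleson estimate on $\Theta_t 1$. For $p=2$ the vertical three-step scheme of Sections \ref{c2.s2}--\ref{c2.s3} goes through verbatim: Steps 2 and 3 reduce the Carleson estimate first to the stopping-time bound of Lemma \ref{1.lemma} and then to the weak-type statement of Sublemma \ref{1.sublemma}. Only Step 1 requires new work, because the kernel of $\Theta_t$ no longer satisfies pointwise bounds.

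For Step 1, I would first apply Lemma \ref{l2.6} with $p=2$ to the matrix $b_Q$, producing a non-overlapping family $\{Q_j\}$ of dyadic sub-cubes of $Q$ with $\sum_j|Q_j|\le(1-\eta)|Q|$ and the pointwise domination
\[
|\Theta_t 1(x)|^2\,\mathbbm{1}_{\Gamma^{2\epsilon}}(\Theta_t 1(x))\,\le\, 4\,|\Theta_t 1(x)\,A_t b_Q(x)|^2,\qquad (x,t)\in R_Q\setminus\bigcup_j R_{Q_j}.
\]
Next, I would invoke the Coifman--Meyer identity
\[
(\Theta_t 1)\,A_t \,=\, (\Theta_t 1)(A_t - A_t P_t) \,+\, \bigl((\Theta_t 1)A_t P_t - \Theta_t\bigr) \,+\, \Theta_t \,=:\, R_t^{(1)} \,+\, R_t^{(2)} \,+\, \Theta_t,
\]
apply it to $b_Q$, and estimate each summand separately. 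The $\Theta_t b_Q$ term is controlled by $C|Q|$ directly via the testing assumption \eqref{4.6}. The remainder terms are treated by applying Corollary \ref{c3.11} column by column: since $b_Q\in H^m$, each column $b_Q^k$ lies in $H$, so
\[
\iint_{\uh}|R_t^{(i)}b_Q^k(x)|^2\,\frac{dxdt}{t}\,\lesssim\,\|b_Q^k\|_2^2,
\]
and summing over $k$ and using \eqref{4.5} gives the required $|Q|$ bound.

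The essential use of the hypotheses of Theorem \ref{t4.2} occurs precisely at this last step: the containment $b_Q\in H^m$ is exactly what makes Corollary \ref{c3.11} applicable to the remainders. The main obstacle I anticipate is the careful bookkeeping required to ensure that the $T1$ reduction respects the $H$-restriction, and in particular that the CLP family can be chosen to preserve $H$ (so that $Q_s f\in H$ for $f\in H$); this is routine in the applications we have in mind but might require a small additional hypothesis in the statement. Because $p=2$, the higher-integrability condition (d) and the hypercontractive estimate (c) from Section \ref{c4} play no role, and the weighted extrapolation arguments of Section \ref{c4.s1} are replaced by the direct $L^2$ bound of Corollary \ref{c3.11}; as a result the proof is strictly simpler than that of Theorem \ref{3.genmatrixtheorem}.
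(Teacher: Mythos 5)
Your reduction to the Carleson measure estimate via a generalized $T1$ theorem, and your treatment of Step~1---invoking Lemma~\ref{l2.6} with $p=2$ to produce the stopping family $\{Q_j\}$ and the pointwise domination, then Coifman--Meyer and Corollary~\ref{c3.11} applied column-by-column---are exactly what the paper does, and your remark that hypotheses (c), (d) and the weighted extrapolation are no longer needed is correct. You also correctly flag the subtlety that the CLP family should preserve $H$ so that the quasi-orthogonality condition can be fed into the Cotlar/Schur argument behind Corollary~\ref{c3.11}; this is indeed implicit in the applications (in the Kato setting $H$ consists of gradients and $Q_s$ is convolution, so $Q_s$ commutes with $\nabla$).

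However, the claim that Steps~2 and~3 of Sections~\ref{c2.s2}--\ref{c2.s3} ``go through verbatim'' is where the argument breaks down. The proof of Sublemma~\ref{1.sublemma} in Section~\ref{c2.s3} relies essentially on the pointwise H\"older regularity \eqref{1.pointwisecontinuity} of the kernel at two places that you cannot reproduce here: (i) it is used to show that the level set $\Omega_{N,\gamma}$ is \emph{open}, which is what makes the Whitney decomposition possible, and (ii) it supplies the estimate \eqref{eq2.28a} that is the engine of Claim~\ref{1.claim} (both Cases~1 and~2 of the bound for the term~$II$). In the setting of Theorem~\ref{t4.2} one has only the $L^2$ information \eqref{4.2}--\eqref{4.4}, so $x\mapsto\Theta_t1(x)$ need not be continuous, the vertical square function $g_Q$ need not be continuous either, and the entire Step~3 machinery collapses. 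The paper avoids all of this: once Lemma~\ref{l2.6} produces the stopping family with $\sum_j|Q_j|\le(1-\eta)|Q|$ and once Step~1 shows
\[
\iint_{E_Q^*}|\Theta_t1(x)|^2\,\mathbbm{1}_{\Gamma^\epsilon}(\Theta_t1(x))\,\frac{dxdt}{t}\,\le\,C\,|Q|\,,
\]
it invokes the John--Nirenberg lemma for Carleson measures (\cite[Lemma~1.37]{H1}). That lemma runs the same iteration as your three-step scheme, but it uses the \emph{given} family $\{Q_j\}$ rather than a Whitney decomposition of a level set, so it needs neither openness of any set nor any pointwise regularity of the kernel, only a priori finiteness of the truncated Carleson norm (which follows from $\sup_t\|\Theta_t\|_{2\to 2}<\infty$). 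Replacing your proposed Steps~2--3 with an appeal to this John--Nirenberg lemma closes the gap and yields precisely the paper's proof.
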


The proof of Theorem \ref{t4.2} will be based upon
 the following generalization of the Christ-Journ\'e $T1$ theorem for square functions.

\begin{theorem}(T1 Theorem)
Suppose that $\Theta_t$ satisfies conditions \eqref{4.2}, \eqref{4.3} 
and \eqref{4.4} as above, 
and also that we have the Carleson measure estimate
\begin{equation}
\displaystyle\sup_Q\frac{1}{|Q|}\int_Q\int_0^{\ell(Q)}|\Theta_t1(x)|^2\frac{dxdt}{t}\leq C.
\label{4.10}\end{equation}
Then we have the square function estimate
\begin{equation}
\iint_{\mathbb{R}_+^{n+1}}|\Theta_tf(x)|^2\frac{dxdt}{t}\leq C||f||_{L^2(\mathbb{R}^n)}^2\,,\quad \forall f\in H.
\end{equation}
\label{4.T1theorem}\end{theorem}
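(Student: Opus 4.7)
The plan is to reduce everything to machinery already developed in Section \ref{c4}. Indeed, Lemma \ref{l3.10} and its Corollary \ref{c3.11} were deliberately stated for an arbitrary subspace $H\subset L^2(\rn,\CC^m)$ on which quasi-orthogonality holds, and their hypotheses---namely \eqref{3.supl2bound}, \eqref{3.annulibound}, and \eqref{3.Qsbound} restricted to $h\in H$---coincide verbatim with the present hypotheses \eqref{4.2}--\eqref{4.4}. Thus the proof should be a short assembly, with essentially no new technical ingredients beyond what was already worked out for Theorem \ref{3.T1theorem}.

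First, I would introduce the Coifman-Meyer decomposition exactly as in \eqref{eq3.remainderdef}, writing
\begin{equation*}
\Theta_t f \,=\, (\Theta_t 1)\, A_t f \,-\, R_t^{(1)} f \,-\, R_t^{(2)} f,
\end{equation*}
where $R_t^{(1)} := (\Theta_t 1)(A_t - A_t P_t)$ and $R_t^{(2)} := (\Theta_t 1)\,A_t P_t - \Theta_t$, with $A_t$ the dyadic averaging operator and $P_t$ a nice approximate identity as in Definition \ref{Ptdefinition}. By the triangle inequality, it suffices to bound each of the three resulting pieces of $\iint |\cdots|^2\,dxdt/t$ separately.

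For the principal term $(\Theta_t 1)\,A_t f$, the Carleson measure hypothesis \eqref{4.10} says precisely that $|\Theta_t 1(x)|^2\,dx\,dt/t$ is a Carleson measure on $\uh$. Combining Carleson's embedding theorem with the pointwise bound $|A_tf|\lesssim \mathcal{M}f$ (applied component-wise on $\CC^m$-valued functions) and the $L^2$-boundedness of the maximal operator yields
\begin{equation*}
\iint_{\uh} |\Theta_t 1(x)|^2 \,|A_t f(x)|^2\, \frac{dx\,dt}{t}\,\lesssim\, \|\mathcal{M}f\|_2^2\,\lesssim\, \|f\|_2^2,
\end{equation*}
with no restriction on $f$ at all. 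For the remainder terms, I would invoke Corollary \ref{c3.11} directly: since our hypotheses match those of Lemma \ref{l3.10}, the corollary furnishes
\begin{equation*}
\iint_{\uh} |R_t^{(i)} f(x)|^2\,\frac{dx\,dt}{t} \,\leq\, C \|f\|_2^2, \qquad \forall\, f\in H,\ i=1,2,
\end{equation*}
and summing the three contributions delivers \eqref{4.8}.

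The only conceptual point to verify---and what I would anticipate as the main ``obstacle"---is that restricting quasi-orthogonality to $H$ is genuinely harmless: the Carleson-embedding bound for the principal term is an unconditional $L^2$ estimate and needs no subspace hypothesis, while the remainder bound is exactly where quasi-orthogonality is used, and Corollary \ref{c3.11} was stated with precisely this flexibility. All of the genuine effort---the Schur-lemma/orthogonality argument that produces the decay $\|R_t^{(i)} Q_s h\|_2\lesssim\min(s/t,t/s)^{\beta_0}\|h\|_2$ for $h\in H$---is already discharged in the proof of Lemma \ref{l3.10}, so the present T1 theorem follows by short bookkeeping.
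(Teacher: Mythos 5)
Your proposal is correct and follows essentially the same approach as the paper: Coifman--Meyer decomposition, Carleson embedding for the term involving $\Theta_t 1$ times an average, and Corollary \ref{c3.11} for the remainder(s). The only difference is cosmetic: the paper groups more economically, writing $\Theta_t h = (\Theta_t 1)A_t P_t h + R_t h$ with a single remainder $R_t = \Theta_t - (\Theta_t 1) A_t P_t = -R_t^{(2)}$, and applies Carleson embedding to $(\Theta_t 1)A_t P_t h$ via the non-tangential estimate for $A_t P_t h$; you instead isolate $(\Theta_t 1)A_t f$ as the principal term, which forces you to carry both $R_t^{(1)}$ and $R_t^{(2)}$, but since Corollary \ref{c3.11} covers both, this costs nothing. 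Both bookkeepings are valid.
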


As above, we view ``1" in this context as the $m\times m$ identity matrix.
We defer the proof of Theorem \ref{4.T1theorem} to Section \ref{st1}.

\begin{proof}[Proof of Theorem \ref{t4.2}]
By Theorem \ref{4.T1theorem}, it suffices to establish \eqref{4.10}.
As in the previous section, we choose an $\epsilon>0$ small enough that Lemma
\ref{l2.6} holds, and  we cover $\CC^m$ by a family of 
cones $\{\Gamma^\epsilon_k\}_{1\leq k\leq K(\epsilon,M)}$,
of  aperture $\epsilon$.   It suffices to
establish the uniform estimate
\begin{equation}\label{4.12}\sup_Q\frac1{|Q|}\int_0^{\ell(Q)}\!\!\int_Q
|\Theta_t1(x)|^2 \mathbbm{1}_{\Gamma^\epsilon}\left(\Theta_t 1(x)\right)\frac{dxdt}{t}\,
\leq \,C\,,\end{equation}
for any fixed cone $\Gamma^\epsilon$, since in that case we may apply the latter estimate to each of
the cones $\Gamma_k^\epsilon$, and then sum in $k$ to obtain \eqref{4.10}.

Let us then fix a cube $Q$ and a cone $\Gamma^\epsilon$.  We invoke Lemma 2.6
to obtain a constant $\eta>0$, and a family of non-overlapping cubes $\{Q_j\}\subset \DD(Q)$  satisfying
\begin{equation}\label{4.13}
\sum_j|Q_j| \leq (1-\eta)|Q|\,,
\end{equation}
and
\begin{equation}\label{4.14}
|\Theta_t1(x)|^2 \mathbbm{1}_{\Gamma^{\epsilon}}\big(\Theta_t1(x)\big)
\leq 4|\Theta_t1(x) A_{t}b_Q(x)|^2\,,\qquad \forall (x,t) \in E_Q^*:= R_Q\setminus
\Big(\bigcup_j R_{Q_j}\Big)\,,
\end{equation} 
where we recall that for any cube $Q$, $R_Q:=Q\times (0,\ell(Q))$ is the corresponding Carleson box.
Now, by a well-known John-Nirenberg type lemma for Carleson 
measures (see, e.g., \cite[Lemma 1.37]{H1}),  the left hand side of  \eqref{4.12} is bounded
by a constant times
$$\sup_Q\frac1{|Q|}\int\!\!\!\int_{E^*_Q}
|\Theta_t1(x)|^2 \mathbbm{1}_{\Gamma^\epsilon}\left(\Theta_t 1(x)\right)\frac{dxdt}{t} \,.$$
In turn, by \eqref{4.14}, we have
$$\int\!\!\!\int_{E^*_Q}
|\Theta_t1(x)|^2 \mathbbm{1}_{\Gamma^\epsilon}\left(\Theta_t 1(x)\right)\frac{dxdt}{t} \,
\leq\, 4\int\!\!\!\int_{R_Q}
|\Theta_t1(x) A_{t}b_Q(x)|^2 \frac{dxdt}{t} \,,$$
so it suffices to show that the last expression is no larger than a uniform constant time $|Q|$.
To this end, as usual we write
\begin{multline*}\Theta_t1 A_{t}b_Q = \big((\Theta_t1) A_{t}-(\Theta_t1)A_tP_t\big) b_Q\,+
 \big((\Theta_t1) A_{t}P_t-\Theta_t\big) b_Q\,
+\, \Theta_t b_Q\\[4pt]=: R^{(1)}_t b_Q \,+\,R_t^{(2)}b_Q\,+\,\Theta_tb_Q\,,
\end{multline*}
where as above $P_t$ denotes a nice approximate identity operator
(cf. Definition \ref{Ptdefinition}).  By \eqref{4.6}, the contribution of $\Theta_t b_Q$ immediately
satisfies the desired bound.  We may handle the contributions of the two ``remainder" terms
$R_t^{(1)}b_Q$ and $R_t^{(2)}b_Q$, by invoking \eqref{4.5} and Corollary
\ref{c3.11}, which in the present setting is applied only to $h\in H$ 
(recall that, in particular, each column vector
of $b_Q$ belongs to $H$, by hypothesis).    
\end{proof}

\section{Application to the Kato square root problem}\label{c6}

Let A be an $n\times n$ matrix of complex, $L^{\infty}$ coefficients, defined on $\mathbb{R}^n$, and satisfying the ellipticity (or ``accretivity") condition
\begin{equation*}
\lambda|\xi|^2\leq\mathcal{R}e<A\xi,\xi>\equiv\displaystyle\sum_{i,j}A_{ij}(x)\xi_j\bar{\xi}_i, \ \Vert A\Vert_{\infty}\leq\Lambda,
\end{equation*}

\noindent for $\xi\in\mathbb{C}^n$ and for some $\lambda,\Lambda$ such that $0<\lambda\leq\Lambda<\infty$. We define the divergence form operator
\begin{equation*}
Lu:= -\dv\left(A(x)\nabla u\right),
\end{equation*}
\noindent which we interpret in the usual weak sense via a sesquilinear form.

The accretivity condition above enables one to define an accretive square root $\sqrt{L}$.

\begin{theorem}\cite{AHLMcT} \label{th6.1}
Let L be a divergence form operator defined as above.
Then for all $h\in\dot{L}^2_1(\mathbb{R}^n)$, we have 
\begin{equation*}
\Vert\sqrt{L}h\Vert_{L^2(\mathbb{R}^n)}\leq C\Vert\nabla h\Vert_{L^2(\mathbb{R}^n)}, 
\end{equation*}
with C depending only on $n,
\lambda$ and $\Lambda$.
\end{theorem}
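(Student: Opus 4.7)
The plan is to deduce Theorem \ref{th6.1} from Theorem \ref{t4.2}. By the reduction for the Kato problem detailed in \cite{AHLMcT} (see also \cite{H1}), it suffices to establish the square function estimate
\begin{equation*}
\iint_{\uh}|\Theta_t\vec f(x)|^2\,\frac{dx\,dt}{t}\,\lesssim\,\|\vec f\|_2^2,\qquad \forall\,\vec f\in H,
\end{equation*}
where $\Theta_t\vec f := t(I+t^2L)^{-1}\dv(A\vec f)$ acts on $\CC^n$-valued vector fields and
\begin{equation*}
H := \overline{\{\nabla g:\,g\in \dot{L}^2_1(\rn)\}}\,\subset\, L^2(\rn,\CC^n).
\end{equation*}
Specialized to $\vec f=\nabla h$ this yields $\iint|tL(I+t^2L)^{-1}h|^2\,dx\,dt/t\lesssim\|\nabla h\|_2^2$, from which the Kato estimate follows via the resolvent manipulations of \cite{AHLMcT}. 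Setting $m=n$, it thus remains to verify the hypotheses of Theorem \ref{t4.2} for $\Theta_t$ on the subspace $H$.

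For Definition \ref{def4.1}, the uniform $L^2$-bound \eqref{4.2} follows by duality from the resolvent estimate $\|t\nabla(I+t^2L^*)^{-1}\|_{2\to 2}\lesssim 1$, while the off-diagonal decay \eqref{4.3} with some $\beta>0$ is obtained from iterated Gaffney estimates for the resolvent (cf.\ \cite{AAAHK}). For quasi-orthogonality \eqref{4.4} on $H$, I would choose a CLP family $\{Q_s\}$ whose Fourier multiplier vanishes to order at least one at the origin, so that $\|Q_s g\|_2\lesssim s\|\nabla g\|_2$ for $g\in\dot{L}^2_1$. Since $Q_s$ is a convolution operator and hence commutes with $\nabla$, for $\vec f=\nabla g\in H$ I compute
\begin{equation*}
\Theta_t Q_s\vec f = \Theta_t\nabla(Q_s g) = -tL(I+t^2L)^{-1}Q_s g.
\end{equation*}
Writing $tL(I+t^2L)^{-1} = t^{-1}\bigl(I-(I+t^2L)^{-1}\bigr)$, which is bounded on $L^2$ with norm $\lesssim 1/t$ via accretivity, I obtain $\|\Theta_t Q_s\vec f\|_2\lesssim (s/t)\|\nabla g\|_2 = (s/t)\|\vec f\|_2$, so \eqref{4.4} holds with $\beta=1$.

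For the test functions, fix a small $\epsilon>0$ to be chosen in terms of $\lambda$. For each dyadic $Q\subset\rn$ with center $x_Q$ and each $k\in\{1,\dots,n\}$, set
\begin{equation*}
\phi_Q^{(k)}(x):=\eta_Q(x)(x_k-x_Q^{(k)}),\qquad f_Q^{(k)}:=(I+\epsilon^2\ell(Q)^2L)^{-1}\phi_Q^{(k)},
\end{equation*}
where $\eta_Q$ is a smooth cutoff with $\eta_Q\equiv 1$ on $4Q$ and $\supp\eta_Q\subset 8Q$. Take $b_Q^{(k)}:=\nabla f_Q^{(k)}\in H$, and let $b_Q$ be the $n\times n$ matrix whose $k$-th column is $b_Q^{(k)}$, so $b_Q\in H^n$. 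Since the resolvent $(I+\epsilon^2\ell(Q)^2L)^{-1}$ is an $L^2$-contraction by accretivity and $\|\phi_Q^{(k)}\|_2\lesssim\ell(Q)|Q|^{1/2}$, the energy estimate $\lambda\|\nabla f_Q^{(k)}\|_2^2\leq\mathcal{R}e\langle Lf_Q^{(k)},f_Q^{(k)}\rangle\lesssim\epsilon^{-2}|Q|$ delivers \eqref{4.5}. Because $\phi_Q^{(k)}-f_Q^{(k)}=\epsilon^2\ell(Q)^2Lf_Q^{(k)}$ is quantitatively $L^2$-small, $\fint_Q\nabla f_Q^{(k)}$ lies within $O(\epsilon)$ of $\fint_Q\nabla\phi_Q^{(k)} = e_k$ (with $L^2$ off-diagonal decay of the resolvent handling the error from $\supp\nabla\eta_Q$), and taking $\epsilon$ small enough yields \eqref{4.7}. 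The main obstacle is the testing bound \eqref{4.6}: starting from $Lf_Q^{(k)}=(\epsilon\ell(Q))^{-2}(\phi_Q^{(k)}-f_Q^{(k)})$,
\begin{equation*}
\Theta_t b_Q^{(k)} \,=\, -\frac{t}{\epsilon^2\ell(Q)^2}(I+t^2L)^{-1}\bigl(\phi_Q^{(k)}-f_Q^{(k)}\bigr),
\end{equation*}
and the bounds $\|\phi_Q^{(k)}-f_Q^{(k)}\|_2\lesssim\ell(Q)|Q|^{1/2}$ together with the $L^2$-boundedness of the resolvent give $\|\Theta_t b_Q^{(k)}\|_2^2\lesssim t^2\ell(Q)^{-2}|Q|$, whose Carleson integral against $dt/t$ over $(0,\ell(Q))$ is $O(|Q|)$; the sharp $L^2$ off-diagonal analysis needed to execute this at the correct precision follows the pattern of \cite{AHLMcT} and \cite{HMc}. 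With \eqref{4.5}--\eqref{4.7} in hand, Theorem \ref{t4.2} delivers the square function estimate on $H$, completing the proof of Theorem \ref{th6.1}.
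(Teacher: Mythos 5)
Your proof is correct and follows essentially the same route as the paper: reduce via the Auscher--Tchamitchian equivalence to a square function bound on the subspace $H=\{\nabla g : g\in\dot{L}^2_1(\mathbb{R}^n)\}$, verify conditions (a) and (b) of Definition~\ref{def4.1} via resolvent/Gaffney estimates, and take $b_Q$ to be the gradient of a regularized copy of $(x-x_Q)\eta_Q$. The only substantive difference is that you use the resolvent $(I+t^2L)^{-1}$ (and $(I+\epsilon^2\ell(Q)^2L)^{-1}$ in the construction of $b_Q$) where the paper uses the heat semigroup $e^{-t^2L}$ (and $e^{-\varepsilon\ell(Q)^2L}$), which is an equally standard and interchangeable choice in the Kato literature; you also spell out the verification of \eqref{4.5}--\eqref{4.7} in a bit more detail, whereas the paper simply delegates that step to \cite{HMc}, \cite{HLMc}, \cite{AHLMcT}.
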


\begin{proof}


In \cite{AT}, it is shown that the conclusion of Theorem \ref{th6.1} is equivalent to 
the square function estimate
$$\iint_{\mathbb{R}^{n+1}_+}|\Theta_t\nabla h|^2 \frac{dxdt}{t} \leq C_{n ,\lambda,\Lambda}\,\int_{\mathbb{R}^n}|\nabla h|^2 dx\,,$$
where  $\Theta_t := te^{-t^2L}\dv A$.  Thus,
to prove this theorem, it is enough to verify the conditions of
the Local Tb Theorem (Theorem \ref{t4.2}), 
for this operator $\Theta_t$, with $m=n$, and with 
$$H:=\{\nabla h: \ h\in\dot{L}_1^2(\mathbb{R}^n,\mathbb{C}^n)\}\,,$$ 
a subspace of $L^2(\mathbb{R}^n,\mathbb{C}^n).$

\noindent (a) It follows from standard semigroup theory for divergence form
elliptic operators that the 
family  $\{t e^{-t^2L}\dv\}_{t>0}$ satisfies uniform $L^2$ bounds,
as well as $L^2$ off-diagonal estimates 
(for the latter, just dualize the well known ``Gaffney estimates" for $t\nabla e^{-t^2L}$).
Thus, since $A\in L^{\infty}(\mathbb{R}^n)$, we have condition (a) for $\Theta_t$.

\noindent (b) We choose $\{Q_s\}_{s>0}$ of convolution type satisfying the required conditions
of Definition \ref{3.lpfamily}, and in addition we choose the kernel $\psi \in C_0^\infty$.
We note that
$Q_s\nabla F=\nabla Q_sF$.
Let $\nabla F\in H$, where (by definition) $F\in\dot{L}_1^2(\mathbb{R}^n)$.
Then $F$ is equivalent (modulo constants) to the realization 
$F=I_1f$, where $f\in L^2(\mathbb{R}^n)$,
with $\|f\|_2\approx\|\nabla F\|_2$,
and $I_1$ is the Riesz potential with $\alpha=1$ as in Definition \ref{Rieszpotential}.
We then have
\begin{align*}
te^{-t^2L}\dv AQ_s\nabla F&=te^{-t^2L}\dv A\nabla Q_sF\\
&=-tLe^{-t^2L}Q_sF\\
&=-\frac{1}{t}t^2Le^{-t^2L}Q_sI_1f\\
&=\frac{-s}{t}t^2Le^{-t^2L}\left(\frac{1}{s}Q_sI_1\right)f.
\end{align*}

Using the fact that $t^2Le^{-t^2L}:L^2\rightarrow L^2$ and $\frac{1}{s}Q_sI_1:L^2\rightarrow L^2$ we obtain condition (b).

Finally, we need to find a family of $b_Q$ indexed by cubes Q satisfying the required conditions. In \cite{HMc}, \cite{HLMc} and \cite{AHLMcT}, it is proved that such a family exists,
with $b_Q$ of the form 
$$b_Q=\nabla F_Q:= e^{-\varepsilon \ell(Q)^2 L}(\varphi_Q)\,,$$
with $ \varphi_Q (x):= (x-x_Q)\eta_Q$, where $\varepsilon >0$, is a number chosen suitably small depending
only on   $n,\lambda$ and $\Lambda$, and where $x_Q$ denotes the center of $Q$, and $\eta_Q$
is a smooth cut-off function which is 1 on $4Q$ and vanishes outside of $5Q$.  We refer the reader to 
\cite{AHLMcT} for details of the proofs of the required properties.

\end{proof}

\section{A local $Tb$ theorem with vector-valued testing functions}\label{s6}

In this section, we consider a version of the local $Tb$ theorem for vector-valued
$\Theta_t$, in which the testing functions themselves are vector-valued rather than matrix-valued.
In the following section, we shall then apply this version to establish $L^2$ bounds for layer potentials
associated to a class of divergence form elliptic operators in the half-space $\uh$.

\begin{definition}\label{def6.1}
 We consider a family of operators $\{\Theta_t\}_{t>0}$, taking values in $\CC^{m+1},\,m\geq 1$,
 so that  $\Theta_t := (\Theta_t^0,\Theta'_t):=(\Theta_t^0,\Theta_t^{1},\Theta_t^{2},
...,\Theta_t^{m})$, where each $\Theta_t^j,\, 0\leq j\leq m$, acts on scalar valued $L^2$, and where
for $g=(g^0,g'):=(g^0,g^1,,...,g^m)\in L^2(\mathbb{R}^n,\CC^{m+1})$, we set
$$ \Theta_t g=\sum_{j=0}^m \Theta_t^{j}g^j\,,\qquad \Theta'_tg'=\sum_{j=1}^m \Theta_t^{j}g^j.$$ 
We suppose that $\Theta_t$ satisfies the following properties:

\noindent
\item[(a)] (Uniform $L^2$ bounds and off-diagonal decay in $L^2$).
\begin{equation}
\displaystyle \sup_{t>0}||\Theta_t g||_{L^2(\mathbb{R}^n)}\leq C||g||_{L^2(\mathbb{R}^n,\CC^m)}\,,
\label{6.5a}\end{equation}
\begin{equation}
||\Theta_t g_j||_{L^2(Q)}\leq C2^{-j(n+2+\beta)/2}||g_j||_{L^2(2^{j+1}Q\setminus 2^jQ)}\,, \quad 
\ell(Q)\leq t\leq 2\ell(Q)\,,
\label{6.6a}\end{equation} 
\noindent for some $\beta>0$, where $g_j:=g\mathbbm{1}_{2^{j+1}Q\setminus 2^jQ}$;

\smallskip

\noindent(b) (Quasi-orthogonality on a subspace of $L^2$).   There exists a subspace $H$ of 
$L^2(\mathbb{R}^n,\CC^m)$, such that for some (hence every) CLP family $\{Q_s\}$ (cf. 
Definition \ref{3.lpfamily}), 
there is a $\beta>0$ for which we have
\begin{equation}
||\Theta_t Q_sh||_{L^2(\mathbb{R}^n)}\leq C\left(\frac{s}{t}\right)^{\beta}||h||_{L^2(\mathbb{R}^n)}, \quad 
\forall h=(h^0,h')\in L^2(\rn,\CC)\times H\,,\,\, \forall s\leq t\,.
\label{6.8}\end{equation}


We further define the action of $\Theta_t$ (respectively, $\Theta_t'$)  on matrix-valued
$B=(B_{ij})_{0\leq i,j\leq m}:\RR^n\to\MM^{m+1}$ (resp., 
$B'=(B_{ij})_{1\leq i,j\leq m}:\RR^n\to\MM^m$) in the obvious way:
\begin{equation}
\Theta_t B(x)=\left(\sum_{i=0}^m \Theta_t^i(x,y)\, B_{ij}\right)_{\! 0\leq j\leq m}\,\quad
\Theta'_t B'(x)=\left(\sum_{i=1}^m \Theta_t^i(x,y)\, B_{ij}\right)_{\! 1\leq j\leq m}\,.
\label{6.7}\end{equation} 
\end{definition}

For $\Theta_t$ as above, for convenience of notation, we set
$$\zeta(x,t):=\Theta_t 1(x)\,,\quad \zeta'(x,t):=\Theta_t' 1(x)\,,\quad \zeta^0(x,t):=\Theta_t^0 1(x)\,,$$
where in the previous line, ``1" denotes, respectively, the $(m+1)\times(m+1)$ identity matrix.
the $m\times m$ identity matrix, and the scalar constant 1.  For $\delta \in [0,1)$, we also set
$$\zeta_\delta(x,t):=\zeta(x,t)1_{\{\delta<t<1/\delta\}}\,,\quad 
\zeta'_\delta(x,t):=\zeta'(x,t)1_{\{\delta<t<1/\delta\}}\,,\quad \zeta_\delta^0(x,t):=\
\zeta^0(x,t)1_{\{\delta<t<1/\delta\}}\,.$$

\begin{definition}\label{def6.8}  Given $\Upsilon$ (either scalar or vector valued),
defined on $\uh$, we set
$$\|\Upsilon\|_{\C}:= \sup_Q\frac1{|Q|} \int_0^{\ell(Q)}\!\!\int_Q|\Upsilon(x,t)|^2\frac{dxdt}{t}\,.$$
\end{definition}

\begin{definition}\label{def6.12}  Given a dyadic cube $Q$, and a positive constant $C_0$,
we shall say that a Borel measure 
$\mu_Q$ on $\rn$
is ``adapted to Q with constant $C_0$", if  
$$d\mu_Q \,=\, \phi_Q \, dx\,,$$
where $\phi_Q:\rn\to [0,1]$ is a Lipschitz function satisfying
\begin{equation}
\Vert \nabla\phi_Q\Vert_{L^\infty(\rn)}\leq C_0 \, \ell(Q)^{-1}\,, 
\label{6.13a}\end{equation}
and also
\begin{equation}
\frac1{C_0}\leq \phi_Q\,, \quad {\rm  on} \,\, Q
\label{6.14a}\end{equation}
\end{definition}
We observe that $\phi_Q$, being Lipschitz,  is not  supported in $Q$, by \eqref{6.14a}.

\begin{theorem}\label{t6.1}

Let  $\Theta_t$ and $H$ be as in Definition \ref{def6.1}, and suppose that there exist positive constants 
$\sigma$, 
$\eta$,  $C_0$ and $C_1$,  a system $\{b_Q\}=\{(b_Q^0,b_Q')\} \subset 
L^2(\rn,\CC) \times H \subset
L^2(\rn,\CC^{m+1})$,  indexed by dyadic cubes $Q\subset\mathbb{R}^n$, 
and a 
system of measures $\{\mu_Q\}$, also indexed by  dyadic 
cubes, with each $\mu_Q$ ``adapted to $Q$ with constant $C_0$" as in Definition
\ref{def6.12},
such that for every dyadic cube Q, we have
\begin{equation}
\int_{\mathbb{R}^n}|b_Q(x)|^2dx\leq C_0|Q|,
\label{6.9}\end{equation}
\begin{equation}
\int_0^{\ell(Q)}\!\!\int_Q|\Theta_tb_Q(x)|^2\frac{dxdt}{t}\leq C_0|Q|,
\label{6.10}\end{equation}
\begin{equation}
\sigma\leq \mathcal{R}e\left(\fint_Qb^0_Q\,d\mu_Q\right), 
\label{6.11}\end{equation}
\begin{equation}
 \left|\fint_Qb'_Q\,d\mu_Q\right|\leq \eta\sigma, 
\label{6.12}\end{equation}
with $\eta\leq1/(2C_1+4)$, where,
for all sufficiently small $\delta >0$ (hence also for $\delta=0$),
\begin{equation}
\|\zeta_\delta'\|_{\C}\leq C_1\big(1+\|\zeta_\delta^0\|_{\C}\big)\,. 
\label{6.13}\end{equation}
Then for all $f^0\in L^2(\rn,\CC)$,
\begin{equation}
\iint_{\mathbb{R}_+^{n+1}}|\Theta^0_t f^0(x)|^2\frac{dxdt}{t}\leq C||f^0||_2^2.
\label{6.14}\end{equation}
\end{theorem}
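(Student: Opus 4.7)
The plan is to reduce the conclusion \eqref{6.14} via a scalar $T1$ theorem to the Carleson estimate $\|\zeta^0\|_\C<\infty$, and then to establish that estimate through a local-$Tb$ argument in the spirit of Sections \ref{c2} and \ref{c4}, adapted to exploit the smallness hypotheses \eqref{6.12} and \eqref{6.13}.

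First, I would observe that $\Theta_t^0$, acting on scalar $L^2(\rn,\CC)$, inherits from Definition \ref{def6.1} (by setting $g'=0$ in \eqref{6.5a}--\eqref{6.6a} and $h'=0$ in \eqref{6.8}) all the hypotheses of the scalar analog of Theorem \ref{4.T1theorem}, with the role of $H$ there played by the full space $L^2(\rn,\CC)$. Hence \eqref{6.14} follows once I prove $\|\zeta^0\|_\C<\infty$. To keep Carleson norms a priori finite at every stage, I would work throughout with the truncated functions $\zeta_\delta^0,\,\zeta_\delta'$, seek bounds uniform in $\delta>0$, and send $\delta\to 0$ at the end. Covering $\CC$ by finitely many cones $\Gamma^\epsilon$ of small aperture and invoking a John--Nirenberg lemma for Carleson measures (as in \cite[Lemma 1.37]{H1}), the problem reduces, for each fixed cone $\Gamma^{2\epsilon}$ with unit direction $\nu\in\CC$, to the uniform bound
$$\iint_{E_Q^*}|\zeta^0|^2\mathbbm{1}_{\Gamma^{2\epsilon}}(\zeta^0)\,\frac{dxdt}{t}\,\lesssim\,|Q|,$$
where $E_Q^*=R_Q\setminus\bigcup_j R_{Q_j}$ is the good region outside a stopping-time family of subcubes.

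Next, for fixed $Q$ and cone, I would run the two-type stopping time on $b_Q^0$ familiar from Lemma \ref{l2.6}: select maximal dyadic $Q_j\subset Q$ with either $\fint_{Q_j}|b_Q|$ too large, or $\mathcal{R}e\bigl(\nu\,\overline{\mu_Q(Q_j)^{-1}\!\int_{Q_j}b_Q^0\,d\mu_Q}\bigr)$ too small. Using \eqref{6.9}, \eqref{6.11} and the bounds \eqref{6.13a}--\eqref{6.14a} on $\phi_Q$ (which make $\mu_Q$ comparable to Lebesgue measure on $Q$), the argument of Lemma \ref{l2.6} yields $\sum_j|Q_j|\leq(1-\eta_0)|Q|$, together with the pointwise bound $|\zeta^0|^2\mathbbm{1}_{\Gamma^{2\epsilon}}(\zeta^0)\lesssim|\zeta^0 A_tb_Q^0|^2$ on $E_Q^*$, with $A_t$ the standard dyadic averaging. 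A first Coifman--Meyer decomposition $\zeta^0 A_tb_Q^0=\Theta_t^0b_Q^0+R_t^{(1)}b_Q^0$, combined with the key identity $\Theta_t^0b_Q^0=\Theta_t b_Q-\Theta_t'b_Q'$, then reduces matters to three contributions: (i) $\Theta_t b_Q$, controlled by \eqref{6.10}; (ii) the remainder $R_t^{(1)}b_Q^0$, handled by the analog of Corollary \ref{c3.11} applied to $\Theta_t^0$ on $L^2(\rn,\CC)$ together with \eqref{6.9}; and (iii) the genuinely new term $\Theta_t'b_Q'$, from which the $\eta$-small factor coming from \eqref{6.12} must be extracted.

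The crux is the treatment of (iii). I would decompose $b_Q'=\mathbf{c}_Q\psi_Q+\tilde b_Q$, where $\mathbf{c}_Q:=\mu_Q(Q)^{-1}\!\int_Q b_Q'\,d\mu_Q$ and $\psi_Q$ is a smooth cutoff equal to $1$ on $Q$ and supported just outside. By \eqref{6.12}, $|\mathbf{c}_Q|\leq\eta\sigma$, so $\Theta_t'(\mathbf{c}_Q\psi_Q)\approx\mathbf{c}_Q\cdot\zeta'$ on $R_Q$ (up to a classical off-diagonal error absorbed into the absolute constant), contributing at most $C\eta^2\|\zeta_\delta'\|_\C|Q|$ to the integrated square via direct estimation. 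The complementary piece $\tilde b_Q$ is then handled by the usual quasi-orthogonality and off-diagonal-decay machinery: since $b_Q'\in H$, the analog of Corollary \ref{c3.11} for $\Theta_t'$ applies and, together with \eqref{6.9}, yields an $O(|Q|)$ contribution. Collecting all estimates and invoking \eqref{6.13}, one arrives at an inequality of the form
$$\|\zeta_\delta^0\|_\C\,\leq\, C+C'\eta^2\|\zeta_\delta'\|_\C\,\leq\, C+C'\eta^2 C_1\bigl(1+\|\zeta_\delta^0\|_\C\bigr),$$
and the smallness $\eta\leq 1/(2C_1+4)$ is exactly calibrated so that the coefficient of $\|\zeta_\delta^0\|_\C$ on the right is strictly less than $1$, enabling absorption; letting $\delta\to 0$ then completes the proof. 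The main obstacle is the careful control of $\Theta_t'b_Q'$: one must extract the $\eta$-smallness from \eqref{6.12} cleanly, so that the dependence on $\|\zeta_\delta'\|_\C$ on the right-hand side carries only the small factor in $\eta$ (with no additive absolute piece), since only then can \eqref{6.13} be used to close the absorption argument. This will rely simultaneously on the Lipschitz regularity \eqref{6.13a} of $\phi_Q$, the smooth-cutoff splitting of $b_Q'$, and the $H$-membership of $b_Q'$ so that the analog of Corollary \ref{c3.11} applies to $\Theta_t'$.
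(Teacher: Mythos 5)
Your reduction to a Carleson bound for $\zeta^0$ via a scalar $T1$ theorem, the truncation-in-$\delta$ device, and the broad Coifman--Meyer + stopping-time strategy are all on target, and your identity $\Theta_t^0b_Q^0 = \Theta_t b_Q - \Theta_t' b_Q'$ is a legitimate algebraic substitution. But the treatment of the term $\Theta_t' b_Q'$ contains a genuine gap that the rest of the argument cannot repair. After you peel off the ``constant'' piece $\mathbf{c}_Q\psi_Q$, you assert that $\iint_{R_Q}|\Theta_t'\tilde b_Q|^2\,\frac{dxdt}{t}=O(|Q|)$ by ``the usual quasi-orthogonality and off-diagonal-decay machinery.'' That machinery (Lemma \ref{l3.10} / Corollary \ref{c3.11}) applies only to the \emph{remainder} operators $R_t^{(i)}$, which annihilate constants; it gives no square-function bound for $\Theta_t'$ itself. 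If you try to reach $\Theta_t'\tilde b_Q$ through a further Coifman--Meyer decomposition, the ``main'' term $(\Theta_t'1)A_tP_t\tilde b_Q$ re-introduces $\|\zeta'\|_\C$ via Carleson embedding, but now with a \emph{unit} constant (there is no $\eta$-smallness in $\tilde b_Q$: its $L^2$ norm is $\approx |Q|^{1/2}$, and $A_t\tilde b_Q$ is not pointwise small at fine scales). The resulting inequality is then of the form $\|\zeta^0_\delta\|_\C\lesssim 1 + (1+\eta^2)\|\zeta'_\delta\|_\C$, which cannot be closed by absorption using \eqref{6.13} when $C_1$ is not small. A secondary issue: $\mathbf{c}_Q\psi_Q$ is a constant vector times a scalar cutoff and so, in the intended application where $H$ is a space of gradients, $\tilde b_Q=b_Q'-\mathbf{c}_Q\psi_Q\notin H$; the quasi-orthogonality hypothesis then does not apply to $\tilde b_Q$ even for the remainder pieces.

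The paper's argument routes around both problems by never isolating $\Theta_t'b_Q'$. Instead, it splits $\uh$ into $F_1:=\{|\zeta^0|\leq\sqrt{\eta}|\zeta'|\}$ and $F_2:=\{|\zeta^0|>\sqrt{\eta}|\zeta'|\}$. On $F_1$, the definition of the region gives $\|\zeta^01_{F_1}\|_\C\leq\eta\|\zeta'\|_\C$, which is absorbed via \eqref{6.13} using $\eta\leq 1/(2C_1)$. On $F_2$, one restricts $\zeta$ (not $\zeta^0$) to a cone $\Gamma^\epsilon\subset\CC^{m+1}$ and combines \eqref{6.11}, \eqref{6.12} and the $F_2$-inequality $|\zeta|\lesssim\eta^{-1/2}|\zeta^0|$ to produce a nondegenerate accretivity estimate $|\nu\cdot\fint_Q b_Q\,d\mu_Q|\geq\theta:=\tfrac{\sqrt{\eta}}{8}\sigma>0$ in the cone direction $\nu$, whenever the contribution of $Q$ to the restricted Carleson norm is nonzero. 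This plays the role of \eqref{1.ellipticitybQcond}: a stopping time on the full vector-valued $b_Q$ (with threshold $\theta$), Ahlfors regularity, the pointwise bound $|\zeta|^21_{\Gamma^\epsilon}(\zeta)\lesssim_\theta|\zeta\cdot A_t^{\mu_Q}b_Q|^2$ on $E_Q^*$, and a single Coifman--Meyer decomposition of $\zeta\cdot A^{\mu_Q}_tb_Q$ (whose leading term $\Theta_t b_Q$ is handled by \eqref{6.10}, and whose remainders annihilate the $(m+1)\times(m+1)$ identity and hence are controlled by the $L^2(\rn,\CC)\times H$ quasi-orthogonality) finish the proof. Thus the $\eta$-smallness is consumed not in making $\Theta_t'b_Q'$ small, but in (a) the $F_1$-absorption and (b) keeping $\theta>0$; this is the step your outline would need to adopt.
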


\begin{remark}  Given the hypotheses of Theorem \ref{t6.1},
we also have that for all $h'\in H$,
\begin{equation}\label{eq6.16aa}
\iint_{\mathbb{R}_+^{n+1}}|\Theta'_t h'(x)|^2\frac{dxdt}{t}\leq C||h'||_2^2.
\end{equation}
Indeed, by a well known argument of \cite{FS}, \eqref{6.14} 
implies that $\|\Theta^0_t 1\|_{\mathcal{C}}\leq C$, whence \eqref{eq6.16aa}
follows immediately by \eqref{6.13} and Theorem \ref{4.T1theorem}.
\end{remark}

\begin{remark}  \label{remark6.2} 
It may be that in some applications, one could 
simply take $\phi_Q\equiv 1$, i.e., $d\mu_Q= dx$,
but in our application in Section \ref{s7}, it is useful to have the extra flexibility inherent in 
\eqref{6.11}-\eqref{6.12}. \end{remark}

\begin{remark} \label{remark6.3}
As a practical matter, in applications, one expects to construct
$b_Q'$ depending on $\eta$, where the parameter $\eta$ in \eqref{6.12} is at one's disposal and can 
therefore be made sufficiently small.   This is precisely what we shall do in applying Theorem
\ref{t6.1} to the study of variable coefficient layer potentials, in Section \ref{s7}.  We emphasize that
$\eta$ is required to be small depending only upon $C_1$, but not on $C_0$;   thus, in applications,
there is no harm if $C_0$ becomes larger as $\eta$ becomes smaller, as indeed, will be the case in
our application in Section \ref{s7}.
\end{remark}

\begin{proof} By Theorem \ref{3.T1theorem} (the generalized $T1$ Theorem of \cite{CJ}), it 
is enough to show that $\|\zeta^0\|_{\C}\leq C$.  
We proceed under the {\it a priori} qualitative assumption that $\|\zeta^0\|_\C$ is finite,
but with no particular quantitative bound.  We may do this by working with $\zeta_\delta$
in place of $\zeta$, as long as we obtain bounds that are independent of $\delta$.  To simplify the
notation, we shall simply write $\zeta$, not $\zeta_\delta$.
We define
\begin{align*}
F_1&:= \{(x,t)\in\uh: |\zeta^0(x,t)|\,\leq\sqrt{\eta}\,|\zeta'(x,t)|\}\\[4pt]
F_2&:= \{(x,t)\in\uh: |\zeta^0(x,t)|\,>\sqrt{\eta}\,|\zeta'(x,t)|\}\,,
\end{align*}
so that 
\begin{equation}\label{6.15}
\|\zeta^0\|_\C \,\leq\, \|\zeta^01_{F_1}\|_\C\,+\,\|\zeta^01_{F_2}\|_\C\,.
\end{equation}
By Definition \ref{def6.8}, and \eqref{6.13}, we have
$$\|\zeta^01_{F_1}\|_\C \,\leq \,\eta\,\|\zeta'\|_\C \,\leq\,C_1\eta\big( 1+\|\zeta^0\|_C\big)\,.$$
Since $\eta \leq 1/(2C_1)$, we may hide the term $C_1\eta\,\|\zeta^0\|_\C$ on the left hand side of
\eqref{6.15}.  Thus, it suffices to show that $\|\zeta^01_{F_2}\|_\C\leq C$.  To this end, we first
 note that by \eqref{6.11} and \eqref{6.12},
$$\sigma |\zeta^0|\leq \big|\zeta^0 \fint_Qb^0_Q\,d\mu_Q\big|
\leq \big|\zeta\cdot\fint_Qb_Q\,d\mu_Q\big| +\big|\zeta'\cdot\fint_Qb'_Q\,d\mu_Q\big|
\leq \big|\zeta\cdot\fint_Qb_Q\,d\mu_Q\big| +\eta\sigma|\zeta'|\,,$$
for every dyadic cube $Q$.  In $ F_2$, we then have (again for every $Q$)
$$\sigma |\zeta^0|\leq \big|\zeta\cdot\fint_Qb_Q\,d\mu_Q\big| +\sqrt{\eta}\sigma|\zeta^0|\,,$$
and also
$$|\zeta|\leq |\zeta^0|+|\zeta'|\leq (1+\eta^{-1/2})|\zeta^0|\leq 2\eta^{-1/2}|\zeta^0|\,.$$
Combining the latter two estimates, we obtain that for all $ (x,t)\in F_2$, and for every $Q$,
\begin{equation}\label{6.16}
\frac{\sqrt{\eta}}{2}\big(1-\sqrt{\eta}\big)\sigma |\zeta(x,t)|\leq \big(1-\sqrt{\eta}\big)\sigma|\zeta^0(x,t)|\leq
\big|\zeta(x,t)\cdot\fint_Qb_Q\,d\mu_Q\big|\,. 
\end{equation}
We now observe that, as above, in order to estimate $\|\zeta^01_{F_2}\|_\C$,
it suffices to prove that, for $\epsilon >0$ chosen small enough,
\begin{equation}\label{6.17}
\|\zeta^01_{F_2}\mathbbm{1}_{\Gamma^\epsilon}(\zeta)\|_\C\leq C\,,
\end{equation}
 where $\Gamma^\epsilon$
 is an arbitrary cone of aperture $\epsilon$, i.e.,
 $$\Gamma^\epsilon  =\Gamma^\epsilon(\nu):= \{z\in\CC^{m+1}:|(z/|z|)-\nu|<\epsilon\}\,,$$
for some unit direction vector $\nu\in \CC^{m+1}$.  Indeed, given \eqref{6.17}, we may then sum over an
 appropriate collection of such cones, covering $\CC^{m+1}$, to obtain the bound
 $\|\zeta^01_{F_2}\|_\C\leq C(\epsilon, m).$   We therefore fix such a cone $\Gamma^\epsilon
 =\Gamma^\epsilon(\nu)$,
 and observe that, for $(x,t)\in F_2$,  and 
 $\zeta(x,t)\in\Gamma^\epsilon$, since $\eta\leq 1/4$, by \eqref{6.16}, we have
for every dyadic cube $Q$,
 \begin{multline*}\frac{\sqrt{\eta}}{4}\sigma  \leq \Big|\frac{\zeta(x,t)}{|\zeta(x,t)|}\cdot\fint_Qb_Q\,
 d\mu_Q\Big|
 \leq \Big|\left(\frac{\zeta(x,t)}{|\zeta(x,t)|}-\nu\right)\cdot\fint_Qb_Q\,d\mu_Q\Big|
 +\big|\nu\cdot\fint_Qb_Q\,d\mu_Q\big|\\[4pt]
 \leq\, C_0\epsilon \,+\,\big|\nu\cdot\fint_Qb_Q\,d\mu_Q\big|\,,
 \end{multline*}
where in the last step we have used Schwarz's inequality, the fact 
that $1/C_0\leq d\mu_Q/dx  =\phi_Q\leq 1$ on $Q$,  and \eqref{6.9}.  
Since $\epsilon$ is at 
our disposal, we may choose it small enough, say $\epsilon \leq \sigma\sqrt{\eta}/(8C_0)$, 
and then hide the small term, to obtain
\begin{equation}\label{6.18}
\frac{\sqrt{\eta}}{8}\sigma\,=:\,\theta\,\leq\,\big|\nu\cdot\fint_Qb_Q\,d\mu_Q\big|\,.
\end{equation}
We observe that the latter bound does not depend on $(x,t)$, but on 
the other hand, was deduced from the existence of some $(x,t)\in F_2$, for which
$\zeta(x,t) \in \Gamma^\epsilon$.  Thus, \eqref{6.18} holds for any $Q$ such that
\begin{equation*}
\iint_{R_Q} |\zeta^0(x,t)|^2 1_{F_2}(x,t)\mathbbm{1}_{\Gamma^\epsilon}\big(\zeta(x,t)\big) 
\frac{dx dt}{t}\,\neq \,0\,,
\end{equation*} 
where as usual, $R_Q:=Q\times(0,\ell(Q))$ 
denotes the standard Carleson box above $Q$.
Consequently, in proving \eqref{6.17}, we may suppose henceforth,
without loss of generality, that \eqref{6.18} holds in any dyadic cube $Q$ under consideration.

We therefore fix a dyadic cube $Q$ such that \eqref{6.18} holds, and
we  follow a (now familiar) stopping time procedure to extract
a family $\{Q_j\}$ of non-overlapping dyadic sub-cubes of $Q$,
which are maximal with respect to the property that at least one of the following conditions
holds:
\begin{equation}\label{6.19}
\fint_{Q_j}|b_Q|\,d\mu_Q\,>\,\frac{\theta}{4\epsilon} \,\qquad  {\rm (type \ I) }
\end{equation}
\begin{equation}\label{6.20}
\big|\nu\cdot\fint_{Q_j}b_Q\,d\mu_Q\big|\,\leq\,\frac{\theta}{2} \qquad {\rm(type \ II)}\,.
\end{equation}
If some $Q_j$ happens to satisfy both the type I and type II conditions, then we arbitrarily assign it to
be of type II;  for simplicity of notation, we write $Q_j\in I$, or $Q_j\in II$, to mean that the cube is of
type I, or type II, respectively.
We now claim that for this family of cubes,  
\begin{equation}\label{6.21}
\sum_j|Q_j|\leq(1-\beta)|Q|\,,
\end{equation}
for some uniform $\beta>0$, and that
\begin{equation}\label{6.22}
|\zeta(x,t)|^2 \mathbbm{1}_{\Gamma^{\epsilon}}\big(\zeta(x,t)\big)
\leq \,C_\theta\,|\zeta(x,t)\cdot A^{\mu_Q}_{t}b_Q(x)|^2\,,\qquad \forall (x,t) \in  E_Q^*:=R_Q\setminus
\big(\bigcup_j R_{Q_j}\big)\,,
\end{equation}
where $ A^{\mu_Q}_t$ is the dyadic averaging operator adapted to the measure $\mu_Q$, i.e.,
$$A^{\mu_Q}_t f(x) :=\fint_{Q(x,t)} f\,d\mu_Q\,,$$
where  $Q(x,t)$ denotes the smallest dyadic cube, of side length at least $t$, that contains $x$.

Let us verify \eqref{6.22} first.  Observe first that for any dyadic sub-cube $Q'\subset Q$, which is not contained in any $Q_j$, we have that the opposite inequalities to \eqref{6.19} and \eqref{6.20}
hold, for the average of $b_Q$ over $Q'$, by maximality of the cubes in the family $\{Q_j\}$;  i.e., by definition of $A^{\mu_Q}_t$, we have 
\begin{equation*}
\frac{\theta}{2}\,\leq \big|\nu\cdot A^{\mu_Q}_t(b_Q)(x)\big|\,\, \,{\rm and }\,\, 
\,|A^{\mu_Q}_t(b_Q)(x)|\leq 
\frac{\theta}{4\epsilon}
\,,\qquad \forall (x,t) \in E_Q^*\,.
\end{equation*}
Thus, if $z\in\Gamma^\epsilon$, and $(x,t)\in E_Q^*$, we have
\begin{multline*}
\frac{\theta}{2}\,\leq \big|\nu\cdot A^{\mu_Q}_t(b_Q)(x)\big|\,\leq\,
 \big|(z/|z|)\cdot A^{\mu_Q}_t(b_Q)(x)\big| + \big|\big(\nu-(z/|z|)\big)\cdot A^{\mu_Q}_t(b_Q)(x)\big|\\[4pt]
 \leq\,\big|(z/|z|)\cdot A^{\mu_Q}_t(b_Q)(x)\big| +  \frac{\theta}{4}\,.
 \end{multline*}
We now obtain \eqref{6.22}, by setting  $z=\zeta(x,t)$.

Next, we establish \eqref{6.21}.   Set $E:= Q\setminus (\cup_j Q_j)$, and
 $ B_1:= \cup_{Q_j \in I}Q_j$.  
 We note that by definition of the type I cubes, 
 $ B_1\subset \{M(b_Q) > \theta/(4\epsilon)\}$, where $M$ denotes the Hardy-Littlewood
 maximal operator, whence it follows by \eqref{6.9} that, 
\begin{equation} \label{6.23}
| B_1| \,\leq\, C \left(\frac{\epsilon}{\theta}\right)^2 
\int_{\rn} |b_Q|^2 \,\leq \, 
CC_0\left(\frac{\epsilon}{\theta}\right)^2|Q|\,.
\end{equation}
Let us note that by Definition \ref{def6.12},
\begin{equation}\label{6.30}
\frac1{C_0}\,|Q|\,\leq\,\mu_Q(Q)\,\leq\, |Q|\,.
\end{equation}
 By \eqref{6.18}, and then Definition \ref{def6.12},
 \eqref{6.20}, \eqref{6.23}, and \eqref{6.9}, we have
 \begin{multline*}
 \theta\, \mu_Q(Q)\,\leq\, \big|\nu\cdot\int_{Q}b_Q\,d\mu_Q\big|\\[4pt]\leq \big|\nu\cdot\int_{E}b_Q
 \,d\mu_Q\big|
 \,+\int_{ B_1}|b_Q|\,d\mu_Q\,+\sum_{Q_j \in II}\big|\nu\cdot\int_{Q_j}b_Q\,d\mu_Q\big|\\[4pt]
 \leq\, |E|^{1/2}\, \|b_Q\|_2\, +\,| B_1|^{1/2} \|b_Q\|_2 \,+\, \frac{\theta}{2}\sum_j\mu_Q(Q_j)\,\leq\,
C\, |E|^{1/2}\, |Q|^{1/2}\, +\,C_\theta\epsilon\,|Q| \,+\, \frac{\theta}{2}\mu(Q)\,.
  \end{multline*}
Choosing $\epsilon$ small enough, and using \eqref{6.30},
we have $|Q|\,\leq\, C_\theta \,|E|,$
  which is equivalent to \eqref{6.21}.
  
  With \eqref{6.21} and \eqref{6.22} in hand, we turn to the proof of \eqref{6.17}.
  We  note that by \eqref{6.21}, and a standard John-Nirenberg lemma
  for Carleson measures (see, e.g., \cite[Lemma 1.37]{H1}), 
  \begin{multline*}
  \|\zeta^01_{F_2}\mathbbm{1}_{\Gamma^\epsilon}(\zeta)\|_\C\,\lesssim\,
  \sup_Q \frac1{|Q|}\iint_{E^*_Q}|\zeta^0(x,t)|^2 1_{F_2}(x,t) \mathbbm{1}_{\Gamma^\epsilon}(\zeta(x,t))
  \frac{dx dt}{t}\\[4pt]
  \lesssim \, \sup_Q \frac1{|Q|}\iint_{R_Q}|\zeta(x,t)\cdot A^{\mu_Q}_{t}b_Q(x)|^2
  \frac{dx dt}{t}\,,
  \end{multline*} 
where in the last step we have used the trivial bound $|\zeta^0|\leq  |\zeta|$, and \eqref{6.22}.

Thus it remains only to show that the last term is bounded.
To this end,  recalling that $\zeta=\Theta_t 1$,  we fix $Q$, and use the familiar 
trick of Coifman-Meyer to write
\begin{multline*} \zeta \cdot A^{\mu_Q}_t b_Q = \big((\Theta_t1) 
A^{\mu_Q}_{t}-(\Theta_t1)A^{\mu_Q}_tP_t\big) b_Q\,+
 \big((\Theta_t1) A^{\mu_Q}_{t}P_t-\Theta_t\big) b_Q\,
+\, \Theta_t b_Q\\[4pt]=: R^{(1)}_t b_Q \,+\,R_t^{(2)}b_Q\,+\,\Theta_tb_Q\,,
\end{multline*}
where as above $P_t$ denotes a nice approximate identity operator
(cf. Definition \ref{Ptdefinition}).  The contribution of $\Theta_t b_Q$ may be handled immediately
by hypothesis \eqref{6.10}.  The contributions of the two remainder terms $R_t^{(i)}b_Q,\, i=1,2$,
may be handled as follows.  By \eqref{6.9}, and a standard orthogonality argument,
it is enough to show that for some $\beta_0>0$, and for all $t\in (0, \ell(Q))$, we have
\begin{equation}\label{6.31}
\int_{Q}|R^{(i)}_tQ_sh(y)|^2dy 
\lesssim \min\left(\frac{s}{t},\frac{t}{s}\right)^{\beta_0}\int_{\mathbb{R}^n}|h(x)|^2dx\,.
\end{equation}
In turn, one may prove the latter fact by following {\it mutatis mutandi} the proof of Lemma
\ref{l3.10}, bearing in mind that in $R_Q$, by Definition \ref{def6.12},
the modified dyadic averages $A_t^{\mu_Q}$
enjoy the same estimates as do the standard dyadic averages $A_t$.
\end{proof}

\section{Application of Theorem \ref{t6.1} to the theory of layer potentials}\label{s7}

We  consider layer potentials associated to  divergence form complex coefficient equations
$Lu=0$, where \begin{equation*} L=-\dv A\nabla:=-\sum_{i,j=1}^{n+1}\frac{\partial}{\partial
x_{i}}\left(A_{i,j} \,\frac{\partial}{\partial x_{j}}\right)\end{equation*}
is defined in $\mathbb{R}^{n+1}=\{(x,t)\in\mathbb{R}^{n}\times\mathbb{R}\}, n\geq 2$
  (we recall that we use the notational conventions that $x_{n+1}=t$, and that
capital letters may be used to denote points in $\ree$, e.g., $X=(x,t)$). 
Here, $A=A(x)$ is an $(n+1)\times(n+1)$ matrix of complex-valued $L^{\infty}$ coefficients, defined on
$\mathbb{R}^{n}$ (i.e., independent of the $t$ variable) and satisfying the 
uniform ellipticity condition
\begin{equation}
\label{eq7.1} \lambda|\xi|^{2}\leq\Re e\,\langle A(x)\xi,\xi\rangle
:= \Re e\sum_{i,j=1}^{n+1}A_{ij}(x)\xi_{j}\bar{\xi_{i}}, \quad
  \Vert A\Vert_{L^{\infty}(\mathbb{R}^{n})}\leq\Lambda,
\end{equation}
 for some $\lambda>0$, $\Lambda<\infty$, and for all $\xi\in\mathbb{C}^{n+1}$, $x\in\mathbb{R}^{n}$. 

 In the present work, we further suppose that weak solutions of the equations $Lu=0$ (and also
 $L^*u=0$, where $L^*$ denotes the Hermitian adjoint of $L$) satisfy 
De Giorgi/Nash (``DG/N")  estimates, i.e., that
 there is a constant $C$
and an exponent $\alpha>0$ such that for any ball
$B=B(X,R)\subset \ree$, of radius $R$,  for which $Lu=0$ in the concentric double $2B := 
B(X,2R)$, we have the local H\"older continuity estimate 
\begin{equation}
|u(Y)-u(Z)|\leq
C\left(\frac{|Y-Z|}{R}\right)^\alpha\left(\fint_{2B}|u|^{2}\right)^{1/2},\label{eq7.2}\end{equation}
whenever $Y,Z\in B$. Observe that any $u$ satisfying \eqref{eq7.2} also 
satisfies Moser's ``local boundedness" estimate \cite{M}
\begin{equation}
\sup_{Y\in B}|u(Y)|\leq C \left(\fint_{2B}|u|^{2}\right)^{1/2}.\label{eq7.3}\end{equation}
Estimates \eqref{eq7.2} and \eqref{eq7.3} always hold for real coefficients
\cite{DeG,N}, and are stable under small  complex, $L^{\infty}$ perturbations
\cite{A2} (see also~\cite{AT,HK2});  moreover they hold always for 
$t$-independent complex operators
in ambient dimension $n+1 = 3$  \cite[Section 11]{AAAHK}.
In the presence of the DG/N estimates (for $L$ and $L^*$), by \cite{HK},
the operators $L$ and $L^*$, respectively, have fundamental solutions
$E(X,Y)$, and $E^*(X,Y) = \overline {E(Y,X)}$, satisfying the following properties:
first, that
\begin{equation}\label{eq7.4a}
L_{x,t} \,E (x,t,y,s) = \delta_{(y,s)},\,\,\,
L^*_{y,s}\, E^*(y,s,x,t):= L^*_{y,s} \,\overline {E (x,t,y,s)} = \delta_{(x,t)},
\end{equation}
where $\delta_X$ denotes the Dirac mass at the point $X$;  
second,  by the $t$-independence of our 
coefficients,  that 
\begin{equation}
E(x,t,y,s)=E(x,t-s,y,0);\label{eq7.4}\end{equation}
and finally, 
for $j\geq 0$, that 
there exists a constant $C_j$ depending only on $j$,
dimension, ellipticity and \eqref{eq7.2} and \eqref{eq7.3}, such that for 
for all $(x,t)\neq(y,s)\in \mathbb{R}^{n+1}$, we have
\begin{equation}\label{eq7.6} \left|(\partial_t)^{j} E(x,t,y,s) \right| \leq\,
C_j \,\big(|t-s|+|x-y|\big)^{1-n-j}\end{equation}
\begin{equation}\label{eq7.7} \left|\left(\Delta_h(\partial_t)^{j} E(\cdot,t,y,0)\right)(x)\right| + 
\left|\left(\Delta_h(\partial_t)^{j} E(x,t,\cdot,0)\right)(y)\right|
\leq C_j \frac{|h|^\alpha}{(|t|+|x-y|)^{n+j+\alpha-1}},\end{equation} whenever 
$2|h|\leq |x-y|$ or $|h|<20|t|$, for some $\alpha >0$, where $\left(\Delta_h f\right)(x) 
:= f(x+h) - f(x).$

We define the single layer potential operator, associated to $L$, by
\begin{equation}\label{eq7.8}\SL_{t}f(x) :=\int_{\mathbb{R}^{n}}E(x,t,y,0)\,f(y)\,dy, \,\,\, t\in 
\mathbb{R}\,;
\end{equation}
the single layer potential associated to $L^*$ is defined analogously, with
$E^*$ in place of $E$.    

We shall encounter 
operators whose kernels involve derivatives applied to the second set of variables in the 
fundamental solution $E(x,t,y,s)$, and we denote this by appropriate parenthetic grouping;
e.g., for $f:\rn\to\CC^{n+1}$, we set
\begin{equation}\label{eq7.8a}
\big(\SL_t\nabla\big) f(x) := \int_{\rn} \left(\nabla_{y,s} E(x,t,y,s)\right)\!\!\big|_{s=0}\, \cdot
f(y)\,dy\,.
\end{equation}
We similarly denote the individual components of the vector-valued operator $(\SL_t\nabla)$, thus
for $f:\rn\to\CC^{n}$, 
\begin{equation}\label{eq7.8b}
\big(\SL_t\nabla y\big) f\,= \,-  \SL_t \left(\dv_y f\right)\,,
\end{equation}
and, by translation invariance in $t$ \eqref{eq7.4}, 
\begin{equation}\label{eq7.8c}
\big(\SL_t\partial_s\big) = - \partial_t \SL_t\,.
\end{equation}
With these notational conventions in hand, 
we record for future reference the following estimates taken from
\cite{AAAHK}:
\begin{lemma}[\cite{AAAHK}, Lemma 2.8, Lemma 2.9 and Lemma 2.10]\label{l27.1}
Suppose that $L,L^*$  are
$t$-independent divergence form complex elliptic operators as above, whose null
solutions satisfy the
DGN bounds \eqref{eq7.2}. 
Then for some $C$ depending only on $n$, \eqref{eq7.1} and \eqref{eq7.2}, 
for every fixed $x\in\rn$ and $t\neq 0$,  we have
\smallskip
\begin{equation}\label{eq7.12a}
\int_{\rn} |\nabla_{y,s} E(y,s,x,t)|_{s=0}|^2\, dy \leq C |t|^{-n}
\end{equation}
In addition, 
if  $j\geq 0$,
then there is a constant $C_j$ depending on $n$, \eqref{eq7.1} and \eqref{eq7.2}, such that
for $ f: \mathbb{R}^n \to \mathbb{C}^{n+1}$, and
for every cube $Q$,  for all integers $k\geq 1$,  and  
 for all $t\in\mathbb{R}$, 
\begin{equation}\label{eq7.11}
\|(\partial_t)^j (\SL_t\nabla )\,(f
1_{2^{k+1}Q\setminus 2 ^kQ})\|^2_{L^2(Q)} 
\leq \,C_j\, 2^{-nk} (2 ^k\ell (Q))^{-2j} \|f\|^2_{L^2(2^{k+1}Q\backslash 2^k
Q)}\,,
\end{equation}
Moreover, for each $j\geq 1$, 
\begin{equation}\label{eq7.12}
\| t^{j}(\partial_t)^j \left(\SL_t\nabla \right)f\|_{L^2(\mathbb{R}^n)}\leq 
C_j\,\|f\|_2
\end{equation}
\end{lemma}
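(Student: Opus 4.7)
\medskip
\noindent\textbf{Proof proposal for Lemma \ref{l27.1}.}  The three estimates are of different character, so I would treat them separately, but all rely on two pillars: the pointwise bounds \eqref{eq7.6}--\eqref{eq7.7} on $E$ and its $t$-derivatives, and Caccioppoli's inequality applied to $E^*(\cdot,\cdot,x,t)=\overline{E(x,t,\cdot,\cdot)}$, which solves $L^*$ away from the pole $(x,t)$.  I will also repeatedly use the $t$-independence of the coefficients, which implies that $\partial_t^{j}E(x,t,y,s)$ continues to satisfy $L$ (or $L^*$, in the other variable set) away from the pole, and hence enjoys the same De~Giorgi/Nash regularity and Caccioppoli estimates as $E$ itself.

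\medskip
\noindent\emph{Step 1: Slice estimate \eqref{eq7.12a}.}  Fix $x\in\rn$ and $t\neq 0$.  Cover $\rn\times\{0\}$ with a locally finite collection of balls $B_i:=B((y_i,0),|t|/8)$ of bounded overlap, and observe that the concentric doubles $2B_i$ are disjoint from the pole $(x,t)$.  Apply Caccioppoli to $E^*(\cdot,\cdot,x,t)$ on each $2B_i$, obtaining
\[
\int_{B_i}|\nabla_{y,s}E(x,t,y,s)|^{2}\,dy\,ds
\;\lesssim\;|t|^{-2}\int_{2B_i}|E(x,t,y,s)|^{2}\,dy\,ds.
\]
Summing over $i$ and using $|E(x,t,y,s)|\lesssim(|x-y|+|t|)^{1-n}$ from \eqref{eq7.6}, one obtains an $L^2$ control of $\nabla_{y,s}E$ on a slab $|s|<|t|/8$ that integrates to $\lesssim|t|^{1-n}$ in total.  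To extract the slice $s=0$, I would note that $\partial_s\nabla_{y,s}E$ solves $L^*$ away from the pole (by $t$-independence), so a second Caccioppoli on balls of radius $|t|/16$ gives the same bound for $\partial_s\nabla_{y,s}E$ with an extra $|t|^{-2}$; the trace inequality on the slab then yields the slice estimate $\int_{\rn}|\nabla_{y,s}E(y,s,x,t)|_{s=0}|^2dy\lesssim |t|^{-n}$.

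\medskip
\noindent\emph{Step 2: Off-diagonal decay \eqref{eq7.11}.}  Set $\theta_t^{(j)}:=(\partial_t)^{j}(\SL_t\nabla)$ and $f_k:=f\mathbbm{1}_{2^{k+1}Q\setminus 2^kQ}$.  For $x\in Q$, the kernel of $\theta_t^{(j)}$ evaluated at $(x,y)$ is $\partial_t^{j}\nabla_{y,s}E(x,t,y,s)|_{s=0}$; for $y$ in the annulus $2^{k+1}Q\setminus 2^kQ$, the distance to the pole $(x,t)$ is $\approx 2^k\ell(Q)+|t|$.  By applying Caccioppoli once more to $\partial_t^{j}E$ in a ball of radius $\tfrac14(2^k\ell(Q))$ centred at $(y,0)$, combined with \eqref{eq7.6}, one obtains the pointwise bound
\[
|\partial_t^{j}\nabla_{y,s}E(x,t,y,s)|_{s=0}|\;\lesssim\;(2^k\ell(Q))^{-n-j}
\]
on a thick enough subset, which is enough after Schwarz to yield
\[
\|\theta_t^{(j)}f_k\|_{L^2(Q)}^{2}\;\lesssim\;(2^k\ell(Q))^{-2n-2j}\,|Q|\,|2^{k+1}Q|\,\|f_k\|_{L^2}^{2}\cdot(\text{correction}),
\]
and simplification produces the claimed factor $2^{-nk}(2^k\ell(Q))^{-2j}$.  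For the sharper $L^2$ form (as opposed to the pointwise route), I would run the same argument via a duality/$TT^*$ reduction: test against $g\in L^2(Q)$, move one derivative to the dual kernel, and reapply the slice bound from Step 1 localized to the annulus.

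\medskip
\noindent\emph{Step 3: Uniform $L^2$ bound \eqref{eq7.12}.}  For $j\geq 1$, the family $\{t^{j}\partial_t^{j}(\SL_t\nabla)\}_{t>0}$ is uniformly bounded on $L^2$.  The plan is $TT^*$: compute
\[
\big(t^{j}\partial_t^{j}(\SL_t\nabla)\big)\big(t^{j}\partial_t^{j}(\SL_t\nabla)\big)^{*},
\]
and note that by $t$-independence the composition reduces to $t^{2j}\partial_t^{2j}$ applied to a convolution-like operator whose kernel is controlled by \eqref{eq7.12a}.  Combined with the off-diagonal decay of Step 2 at scale $t$, a Cotlar--Stein style summation over dyadic annuli gives the uniform bound.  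Equivalently, one may use the semigroup identity $\partial_t^{2j}\SL_t=L^{j}\SL_t\cdot(\text{signs})$ together with the fundamental solution bounds, as is done for the Poisson extension in \cite{AAAHK}.

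\medskip
\noindent\emph{Main obstacle.}  The hardest step is Step 1: passing from an $L^2$ estimate on a slab to an estimate on the single slice $\{s=0\}$, because the naive trace inequality loses a power of $|t|$.  The fix is to exploit that \emph{derivatives} of $E$ in $s$ also solve $L^*$ away from the pole, hence satisfy Caccioppoli with the correct scaling, so the trace theorem can be applied without loss.  Once the slice bound is in hand, Steps 2 and 3 are fairly standard consequences combining Caccioppoli, the pointwise bounds, and a $TT^*$/orthogonality argument.
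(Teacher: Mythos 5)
The paper does not give a proof of Lemma~\ref{l27.1} at all: as the bracketed attribution in the statement and the sentence preceding it make explicit, these three estimates are \emph{imported} verbatim from \cite{AAAHK} (their Lemmas 2.8, 2.9 and 2.10). So there is no in-paper argument to compare your sketch against; what follows is a comparison with the proofs in \cite{AAAHK}.

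Your Step~1 has essentially the right shape, and it is close to what \cite{AAAHK} actually does: the pointwise decay \eqref{eq7.6} plus Caccioppoli gives the slab estimate, and the passage from a slab to the slice $\{s=0\}$ is handled by exploiting $t$-independence so that the $s$-derivative of $E$ is again a null solution and therefore enjoys its own Caccioppoli/pointwise bounds; one then uses the usual one-dimensional trace inequality. One small imprecision: it is $\partial_s E$ (not $\partial_s\nabla_{y,s}E$) that is a null solution away from the pole, and it is to $\partial_s E$ that the second Caccioppoli must be applied, yielding control of $\nabla(\partial_s E)$.

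Step~2, as you first present it, contains a real gap. For complex, merely bounded measurable $t$-independent coefficients, the De~Giorgi/Nash hypothesis \eqref{eq7.2} gives pointwise (H\"older) control of \emph{solutions}, but not of their gradients; there is no local boundedness theorem for $\nabla E$. Thus the pointwise kernel bound $|\partial_t^{j}\nabla_{y,s}E|\lesssim(2^k\ell(Q))^{-n-j}$ you invoke is simply not available, and the ``thick subset'' hedge does not repair this. The whole reason \cite{AAAHK} (and the present paper) work with $L^2$ off-diagonal/Gaffney-type decay rather than pointwise Calder\'on--Zygmund kernel estimates is exactly that gradients of $E$ can only be controlled in $L^2$. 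Your fallback ``duality/$L^2$'' route is the correct one, and it is what \cite{AAAHK} does: Caccioppoli at scale $2^k\ell(Q)$ in an annular slab around the annulus $2^{k+1}Q\setminus 2^kQ$, combined with \eqref{eq7.6} for $\partial_t^j E$, gives the $L^2$ estimate \eqref{eq7.11} directly. But you should delete the pointwise version of the argument: it is wrong, not merely less sharp.

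Step~3 also deviates. The composition $(\SL_t\nabla)(\SL_t\nabla)^*$ is not ``convolution-like'': the coefficients are variable in $x$, and $t$-independence buys you translation invariance in the transversal variable only, not in $x$. What \cite{AAAHK} actually does for the global $L^2$ bound is an annular Schur-type decomposition: tile $\RR^n$ by cubes of side comparable to $|t|$, split $f$ into dyadic annular pieces around each cube, bound the local ($k=0$) contribution by the slice estimate \eqref{eq7.12a}, bound the far pieces by the off-diagonal estimate \eqref{eq7.11}, and sum the resulting geometric series. The elementary Schur route is both cleaner and closer to the source than a Cotlar--Stein/$TT^*$ argument, which in this setting would require verifying almost-orthogonality bounds that you have not supplied.

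In summary: the overall tripartite strategy (Caccioppoli plus pointwise decay of $E$, plus $t$-independence to make $\partial_s E$ a solution, plus an annular decomposition for the global bound) is the right one and matches \cite{AAAHK}. But you should (i) recall that the lemma is cited, not proved, in this paper; (ii) abandon the pointwise gradient route in Step~2; and (iii) replace the $TT^*$ heuristic in Step~3 by the annular Schur test.
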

We note that by \eqref{eq7.8c}, for scalar valued $f$
we obtain from \eqref{eq7.12} that, for $j\geq 1$,
\begin{equation}\label{eq7.13}
\| t^{j}(\partial_t)^{j+1} \SL_t f\|_{L^2(\mathbb{R}^n)}\leq \,
C_j\,\|f\|_2\,.
\end{equation}
We further note for future reference that \eqref{eq7.11}
can be reformulated as
\begin{equation}\label{eq7.14}
\Vert t^{j}(\partial_{t})^{j}(\SL_{t}\nabla)\,(f1_{2^{k+1}Q\backslash 2^{k}Q})\Vert_{L^{2}(Q)}^{2}\leq\,
C_{j}\,2^{-nk}\left(\frac{t}{2^{k}\ell(Q)}\right)^{2j}\Vert
f\Vert^2_{L^{2}(2^{k+1}Q\backslash 2^{k}Q)}\,,
\end{equation}
so in particular, 
 \begin{equation}\label{eq7.15}
\Vert t^{2}(\partial_{t})^{2}(\SL_{t}\nabla)\,(f1_{2^{k+1}Q\backslash 2^{k}Q})\Vert_{L^{2}(Q)}^{2}\lesssim
\,2^{-(n+4)k}\,\Vert
f\Vert^2_{L^{2}(2^{k+1}Q\backslash 2^{k}Q)}\,,\quad {\rm if}\,\, t\approx \ell(Q)\,,
\end{equation}
which by \eqref{eq7.8c} yields also that
 \begin{equation}\label{eq7.15a}
\Vert t^{2}(\partial_{t})^{3}\SL_{t}\,(f1_{2^{k+1}Q\backslash 2^{k}Q})\Vert_{L^{2}(Q)}^{2}\lesssim
\,2^{-(n+4)k}\,\Vert
f\Vert^2_{L^{2}(2^{k+1}Q\backslash 2^{k}Q)}\,,\quad {\rm if}\,\, t\approx \ell(Q)\,.
\end{equation}

The main result of this section is the following, which is originally due to Rosen \cite{R}.
\begin{theorem}\label{t7.1}  
Let $L$ be a $t$-independent complex elliptic operator as above,
for which solutions of $Lu=0$ and $L^*u=0$ satisfy the DG/N bounds \eqref{eq7.2}
(hence also \eqref{eq7.3}).  Then the single layer potential satisfies the following
square function bound:
\begin{equation}\label{eq7.9}
\iint_{\uh}\left|t\,(\partial_t)^2 \SL_t f(x)\right|^2 \frac{ dxdt}{t} \leq C\,\|f\|^2_{L^2(\rn)}\,.
\end{equation}
with $C$ depending only upon $n, \lambda,\Lambda$, and the constants in 
\eqref{eq7.2} and \eqref{eq7.3}.    Analogous bounds hold in the lower half space
$\mathbb{R}^{n+1}_-$, and for the single  layer potential associated to $L^*$.
\end{theorem}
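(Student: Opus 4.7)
The plan is to apply the vector-valued local $Tb$ theorem, Theorem \ref{t6.1}, with $m=n$ and
\[
\Theta_t g := -t\,\partial_t(\SL_t\nabla)\cdot g, \qquad g=(g^0,g')\in L^2(\rn,\CC^{n+1}),
\]
where $(\SL_t\nabla)$ is as in \eqref{eq7.8a}. By \eqref{eq7.8c}, the scalar component is $\Theta_t^0 f = t\partial_t^2\SL_t f$, exactly the operator in \eqref{eq7.9}, so the conclusion \eqref{6.14} of Theorem \ref{t6.1} will give \eqref{eq7.9}. The subspace $H\subset L^2(\rn,\CC^n)$ of quasi-orthogonality is taken to be the $L^2$-closure of the tangential gradients $\{\nabla\tilde h : \tilde h\in\dot L^2_1(\rn)\}$.

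The uniform $L^2$ bounds \eqref{6.5a} and off-diagonal decay \eqref{6.6a} are immediate from Lemma \ref{l27.1} (estimates \eqref{eq7.12}, \eqref{eq7.13}, \eqref{eq7.15}, and \eqref{eq7.15a}). For the quasi-orthogonality \eqref{6.8}, I pick a CLP family $\{Q_s\}$ of convolution type with mean-zero kernel $\psi$, and write $\psi_s = s\,\operatorname{div}\,\vec\psi_s$ for a vector CLP family $\vec\psi_s$, so that $Q_s = s\,\operatorname{div}\circ\vec Q_s$. For $h = (h^0,\nabla\tilde h)\in L^2\times H$, integration by parts together with the commutation $\nabla Q_s = Q_s\nabla$ yields
\[
\Theta_t Q_s h = -s\,t\partial_t^2(\SL_t\nabla)(\vec Q_sh^0) - t\partial_t(\SL_t\nabla)(Q_s\nabla\tilde h),
\]
and the decay $(s/t)^\beta$ follows from the second-order bound $\|t^2\partial_t^2(\SL_t\nabla)\|_{L^2\to L^2}\leq C$ of Lemma \ref{l27.1} applied to the first term, together with a similar frequency-localization argument for the second. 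The Carleson comparison \eqref{6.13} comes from $\operatorname{div}(A\nabla\SL_t) = 0$ in $\uh$ and the $t$-independence of $A$: this gives an algebraic relation expressing the tangential components of $\zeta = \Theta_t1$ in terms of the conormal component, up to bounded error from $A\in L^\infty$ and ellipticity.

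The heart of the argument, and its main obstacle, is the construction of testing functions $b_Q = (b_Q^0,b_Q')$ and measures $\mu_Q$. Following the Kato-type construction of Section \ref{c6}, for each dyadic cube $Q$ I take $b_Q' := \nabla_x u_Q(\cdot,0)\in H$, where $u_Q$ is a suitable bounded null solution of $L$ near $R_{5Q}$, constructed a priori via a finite-scale Lax--Milgram boundary-value problem (hence not circular with respect to the global $L^2$-theory we are proving), with boundary data $u_Q\equiv 1$ on a large sub-region of $Q$. The scalar $b_Q^0$ is then a corresponding trace designed so that, in the expansion
\[
\Theta_t b_Q = t\partial_t^2\SL_t b_Q^0 + t\partial_t\SL_t\bigl(\Delta_x u_Q(\cdot,0)\bigr),
\]
Green's representation of $u_Q$ via the layer potentials together with $Lu_Q=0$ produces a precise cancellation, reducing $\Theta_t b_Q$ to error terms that are off-diagonal with respect to $Q$ (and hence square-summable), plus interior terms controlled by Caccioppoli's inequality applied to $u_Q$. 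The measure $\mu_Q = \phi_Q\,dx$ is chosen with $\phi_Q$ adapted to $Q$ in the sense of Definition \ref{def6.12} and with $\nabla\phi_Q$ symmetric about the centre of $Q$; this gives \eqref{6.11}, and via the integration by parts $\fint_Q b_Q'\,d\mu_Q = -\fint_Q u_Q(\cdot,0)\nabla\phi_Q\,dx$, the small-average condition \eqref{6.12} (tunable small in $\eta$ by the symmetric choice of $\phi_Q$, as per Remark \ref{remark6.3}). The size bound \eqref{6.9} is routine from $\|u_Q\|_\infty+\|\nabla u_Q\|_{L^2(R_{5Q})}\lesssim|Q|^{1/2}$ via Caccioppoli and the De Giorgi/Nash hypothesis. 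The principal difficulty is engineering the cancellation in \eqref{6.10}: a naive choice of $b_Q^0$ leaves each of the two individual terms in the displayed expansion only $L^2$-bounded, producing a logarithmic divergence in the Carleson integral; the constructions of $b_Q^0$ and $u_Q$ must interlock precisely so that $\Theta_t b_Q$ can be recognised as a second $t$-derivative of a null solution of $L$ up to controllable errors, to which Caccioppoli's inequality and interior regularity apply.
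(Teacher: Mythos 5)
The proposal follows the broad strategy of the paper (apply Theorem \ref{t6.1} to a vector-valued operator whose zeroth component is the $t$-derivative square function for $\SL_t$, construct matrix-free testing functions from null solutions of $L$), but there are genuine gaps, one of which is fatal as stated.

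\textbf{The $\beta>0$ requirement in \eqref{6.6a} fails.}  You take $\Theta_t := -t\,\partial_t(\SL_t\nabla)$, so $\Theta_t^0 = t\,\partial_t^2\SL_t$, which is indeed the operator in \eqref{eq7.9}.  But the off-diagonal hypothesis \eqref{6.6a} of Theorem \ref{t6.1} requires $\|\Theta_t g_j\|_{L^2(Q)}\leq C\,2^{-j(n+2+\beta)/2}\|g_j\|_2$ with some \emph{strictly positive} $\beta$.  For your operator, inserting $j=1$ into \eqref{eq7.14} and setting $t\approx\ell(Q)$ gives only
$\|t\,\partial_t(\SL_t\nabla)(f\mathbbm{1}_{2^{k+1}Q\setminus 2^kQ})\|^2_{L^2(Q)}\lesssim 2^{-(n+2)k}\|f\|^2$,
which is exactly the borderline case $\beta=0$.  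Lemma \ref{l3.10} and Corollary \ref{c3.11} (and ultimately the $T1$ theorem, Theorem \ref{4.T1theorem}) genuinely need $\beta>0$, so Theorem \ref{t6.1} does not apply to your $\Theta_t$.  This is precisely why the paper's proof begins with the integration-by-parts step \eqref{7.17}, which replaces $t\,\partial_t^2\SL_t$ by $t^2\,\partial_t^3\SL_t$ (with boundary terms controlled by \eqref{eq7.13}).  Passing to one more $t$-derivative upgrades the decay to \eqref{eq7.15a}, i.e.\ to $\beta=2$, at which point the local $Tb$ theorem becomes available.  Your proposal omits this reduction entirely, and without it the argument cannot proceed.

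\textbf{The construction of $b_Q$ is vague where it needs to be precise, and the paper's is much sharper.}  You describe $b_Q'$ as the tangential trace gradient of a ``suitable bounded null solution $u_Q$ constructed a priori via a finite-scale Lax--Milgram boundary-value problem,'' and $b_Q^0$ as a ``corresponding trace,'' with the key cancellation in \eqref{6.10} left as the ``principal difficulty.''  The paper instead constructs $b_Q$ \emph{explicitly} from the fundamental solution:  it sets $F_Q(y,s):=E(y,s,X_Q^+)-E(y,s,X_Q^-)$ for upper/lower corkscrew points $X_Q^\pm=(x_Q,\pm\tau\ell(Q))$, and then takes
$b_Q^0:=|Q|\,(\partial_{\nu_A^-}F_Q)(\cdot,0)$ and $b_Q':=|Q|\,\nabla_\|F_Q(\cdot,0)$.
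With these choices, Green's identity in the lower half-space produces not merely a ``controllable error'' but the \emph{exact} identity $\Theta_tb_Q(x)=t^2|Q|\,(\partial_t)^3E(x,t,X_Q^-)$, and then \eqref{6.10} follows immediately from the pointwise bound \eqref{eq7.6} with $j=3$.  Your plan to engineer cancellation via an abstract finite-scale solution operator, so that $\Theta_tb_Q$ is ``recognised as a second $t$-derivative of a null solution up to controllable errors,'' does not supply this mechanism (and, consistent with the first point, would need a \emph{third} $t$-derivative to be usable).  Relatedly, your source of smallness for \eqref{6.12} (symmetry of $\nabla\phi_Q$) is not what makes the paper's estimate work; the paper gets $O(\tau)$ because $F_Q$ is itself a difference quotient $\int_{-\tau\ell(Q)}^{\tau\ell(Q)}\partial_tE\,dt$ and is therefore small on the annulus where $\nabla\phi_Q\neq 0$, via \eqref{eq7.6} with $j=1$; a merely symmetric bump would not by itself give the tunable smallness Remark \ref{remark6.3} demands.
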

This is the  fundamental result concerning the layer potentials.
Indeed, by 
 \cite[Lemma 5.2]{AAAHK}, we have
\begin{equation*}
\sup_{t\neq0}\|\nabla\SL_t f\|^2_{L^2(\rn)}\,
 \lesssim\, \|N_*\left(\partial_t\SL_t f\right)\|^2_{L^2(\rn)} \,+\,
 \int_{-\infty}^{\infty}\int_{\mathbb{R}^{n}}\left|t\partial_{t}^{2}\SL_{t}f(x)\right|^{2}\frac{dx dt}{|t|}\,+\,
 \|f\|^2_{L^2(\rn)}\,,
 \end{equation*}  
 and also
 $$\int_{-\infty}^{\infty}\int_{\mathbb{R}^{n}}\left|t\nabla\partial_t\SL_{t}f(x)\right|^{2}\frac{dx dt}{|t|}
 \lesssim \int_{-\infty}^{\infty}\int_{\mathbb{R}^{n}}\left|t\partial_{t}^{2}\SL_{t}f(x)\right|^{2}\frac{dx dt}{|t|}\,+\,
 \|f\|^2_{L^2(\rn)}\,. $$ 
 Moreover, by \cite{AA}, we have
 $$\|N_*\left(\partial_t\SL_t f\right)\|^2_{L^2(\rn)}\,\lesssim\,
 \int_{-\infty}^{\infty}\int_{\mathbb{R}^{n}}\left|t\nabla\partial_t\SL_{t}f(x)\right|^{2}\frac{dx dt}{|t|}\,.$$
 Combining these last three estimates yields,  in particular,
 the bound  
 $$\sup_{t\neq0}\|\nabla\SL_tf\|_{L^2(\rn)}\lesssim  \|f\|_{L^2(\rn)}\,.$$
Bounds for the associated double layer potential (see \cite[(1.5)]{AAAHK} for a precise definition)
 follow by duality, while boundedness of an appropriate non-tangential maximal function of 
 $\nabla\SL_t f$
follows by
\cite[Lemma 4.8]{AAAHK}.   The analogous $L^p$ and $H^p$ bounds will appear in 
\cite{HMiMo}.

Let us  proceed now to give the proof of Theorem \ref{t7.1}.  
\begin{proof}
We first make the following claim:
\begin{equation}\label{7.17}
\iint_{\uh}\left|t\,(\partial_t)^2 \SL_t f(x)\right|^2 \frac{ dxdt}{t} \lesssim 
\iint_{\uh}\left|t^2\,(\partial_t)^3 \SL_t f(x)\right|^2 \frac{ dxdt}{t} \,+\,\|f\|_{L^2(\rn)}^2
\end{equation}
Indeed, the left hand side of \eqref{7.17} equals the limit, as $\delta \to 0$, of
$$\int_\delta^{1/\delta}\!\!\int_{\rn}\left|(\partial_t)^2 \SL_t f(x)\right|^2 \, t\, dx dt
\lesssim \,\int_\delta^{1/\delta}\!\!\int_{\rn}\left|(\partial_t)^2 \SL_t f(x)\right|\,
\left|(\partial_t)^3 \SL_t f(x)\right| \, t^2\, dx dt\,+\, \|f\|_2^2\,,$$
where we have integrated by parts in $t$, and then used \eqref{eq7.13}, with $j=1$,
to control the boundary terms.  Applying ``Cauchy's inequality with $\eps$'s", hiding a small term on the left hand side of the last inequality, and then letting $\delta\to 0$, we obtain \eqref{7.17}.
Thus, in lieu of \eqref{eq7.9}, it is now enough to prove
\begin{equation}\label{eq7.18}
\iint_{\uh}\left|t^2\,(\partial_t)^3 \SL_t f(x)\right|^2 \frac{ dxdt}{t} \,\leq \,C\,\|f\|_{L^2(\rn)}^2\,.
\end{equation}

We therefore turn to the proof of \eqref{eq7.18}.
We shall utilize the following 
notation:  given a vector $\vec{V}:= (V_1,...,V_n,V_{n+1})\subset \ree$, we denote its ``horizontal
component" by
$$V_\|:= (V_1,...,V_n)\,.$$
Similarly, the horizontal component of the $(n+1)$-dimensional gradient operator is
$$\nabla_\|:= (\partial_{x_1},...,\partial_{x_n})\,.$$
For convenience, we write
$$\left(\nabla_{y,s} u\right)(y,0) := \big(\nabla_{y,s} u(y,s)\big)\!\big|_{s=0}\,.$$
We now define 
$$\Theta_t^0 := \,t^2 \,(\partial_t)^3 \SL_t\,,$$
and for $f:\rn\to \CC^n$, we set
$$\Theta'_t f(x) :=\, t^2\,(\partial_t)^2 \int_{\rn} \Big(\overline{A^*}(y)\left(\nabla_{y,s}E\right)(x,t,y,0)\Big)_{\!\|}
\cdot f(y)\, dy\,.$$
It is then enough to show that the hypotheses of  Theorem \ref{t6.1} are verified for this choice of
$\Theta_t=(\Theta_t^0,\Theta'_t)$, with $m=n$, where the ``0" term corresponds to the $t=x_{n+1}$
direction.  
We first observe that \eqref{6.5a}, and \eqref{6.6a} 
(with $\beta=2$),  follow directly from
\eqref{eq7.12}-\eqref{eq7.13} (with $j=2$), and \eqref{eq7.15}-\eqref{eq7.15a}, respectively.  We shall 
establish the quasi-orthogonality estimate \eqref{6.8}, with $m=n$,
and with $H:=\{\nabla f: f\in \dot{L}^2_1(\rn)\}$.  
We treat separately the contributions of $\Theta_t^0 Q_s h^0$, 
and of $\Theta'_tQ_sh'$,  where $h^0\in L^2(\rn,\CC)$, and $h'\in H$.  For the former,
 we use a standard ``Calder\'on-Zygmund" argument exploiting \eqref{eq7.7} (with $j=3$), 
 and the cancellation of the
convolution kernel of $Q_s$;  we leave the routine details to the reader.
Consider now $\Theta'_tQ_sh'$, where $h'=\nabla f$, with $f\in \dot{L}^2_1(\rn)$.
Recall that, modulo constants, we may identify
$f\in \dot{L}^2_1(\rn)$, with $I_1 g$, where $g\in L^2(\rn)$, and $\|g\|_2\approx \|\nabla f\|_2$,
and $I_1=(-\Delta)^{-1/2}$ is the standard fractional integral operator of order 1.
Moreover, for $(x,t)\in\uh$ fixed, and for $y_{n+1}:=s<t$, using that 
$E(x,t,y,s) =\overline{E^*(y,s,x,t)}$, we have (in the weak sense)
\begin{multline}\label{eq7.19}
-\dv_y \Big(\overline{A^*}(y)\nabla_{y,s}E(x,t,y,s)\Big)_{\!\|} =
-\sum_{i=1}^n\sum_{j=1}^{n+1} \overline{\partial_{y_i}  \,A^*_{i,j}(y)\, \partial_{y_j} E^*(y,s,x,t)}\\[4pt]
=\, \sum_{j=1}^{n+1}\overline{ \partial_{s}  \,A^*_{n+1,j}(y)\, \partial_{y_j} E^*(y,s,x,t)}\,=\,
-\sum_{j=1}^{n+1}  \,\overline{A^*_{n+1,j}(y)\, \partial_{t} \,\partial_{y_j} E^*(y,s,x,t)}\,,
\end{multline}
where we have used that $E^*(\cdot,x,t)$ is an adjoint solution away from the pole at $(x,t)$,
and, in the last step, $t$-independence of the coefficients (cf. \eqref{eq7.4}.)  Thus, for $h'=\nabla f
\in H$, again using that $\overline{E^*(y,s,x,t)}=E(x,t,y,s)$, we have, by \eqref{eq7.19} and the 
definition of $\Theta'_t$,
$$\Theta'_t Q_sh' =\Theta'_t \,\nabla_\| Q_s f 
=-t^2 (\partial_t)^3 \big(\SL_t\nabla\big) \big(\vec{\alpha} Q_s I_1 g\big)\,,$$
where $\vec{\alpha} := (A_{1,n+1},...,A_{n+1,n+1})$.  By \eqref{eq7.12} (with $j=3$), since
$\|s^{-1} Q_s I_1\|_{L^2\to L^2} \leq C$,  we then obtain \eqref{6.8}, with $\beta=1$.
We have therefore established that $\Theta_t^0$ and $\Theta'_t$ satisfy all the conditions
of Definition \ref{def6.1}.  

It remains to construct a system $\{b_Q\}\subset L^2(\rn,\CC)\times H$, and a family of
Borel measures $\{\mu_Q\}$ as in Definition \ref{def6.12}, satisfying the hypotheses 
\eqref{6.9}-\eqref{6.12} of Theorem \ref{t6.1},
and also to verify hypothesis \eqref{6.13}.  In fact, the latter estimate is known:  it has been
proved in \cite[Section 3]{HMaMo}, with constant $C_1$ depending only on dimension, ellipticity,
and the DGN constants.  We therefore turn to the heart of the matter, namely, the construction of the system $\{b_Q\}$, and the family $\{\mu_Q\}$, verifying \eqref{6.9}-\eqref{6.12}.

Given a cube $Q\subset \rn$, we let $x_Q$ denote its center, and we let
$$X_Q^\pm:= \big(x_Q,\,\pm\, \tau \,\ell(Q)\big)\,,$$
denote the upper and lower ``Corkscrew points" relative to $Q$, where
$\tau\in (0,1/8)$ is a small number at our disposal, to be chosen.  
Set
$$F_Q(y,s):= E(y,s,X_Q^+)-E(y,s,X_Q^-)\,,$$
and define
$$b_Q^0(y):= \,|Q|\,\left(\partial_{\nu^-_{A}} F_Q\right)(y,0) := \,
|Q|\,e_{n+1}\cdot \ A(y) \left(\nabla_{y,s} F_Q\right)(y,0)\,,$$
where $e_{n+1}:= (0,...,0,1)$ is the inner unit normal to the half space $\uh$
(thus,  $\partial_{\nu^-_A}$ is the outer co-normal derivative, relative to $A$, on the boundary of 
the {\it lower}
half-space $\mathbb{R}^{n+1}_-$), and
$$b'_Q(y):=\,|Q|\,\nabla_\| F_Q(y,0)\,.$$  We then have, by \eqref{eq7.12a}, with $t=\,\pm \,\tau\,\ell(Q)$,
\begin{equation}\label{eq7.22a}
\int_{\rn} |b_Q|^2 \,\leq \, C \tau^{-n} |Q|\,,
\end{equation}
which is \eqref{6.9} with $C_0\approx \tau^{-n}$.

Next, by \eqref{eq7.19}
and the definition of $\Theta_t'$,  we have that
\begin{multline}\label{eq7.22}
\Theta'_t b'_Q (x) \,=\,t^2 |Q| \,(\partial_t)^3 \int_{\rn} (-e_{n+1})\cdot 
\overline{A^*(y)\left(\nabla_{y,s}E^*\right)
(y,0,x,t)}\,F_Q(y,0)\, dy\\[4pt]
=:\, -\,t^2 |Q| \,(\partial_t)^3 \int_{\rn} \overline{\left(\partial_{\nu^-_{A^*}} E^*\right)(y,0,x,t)}
\, F_Q(y,0)\, dy\,,
\end{multline}
so that $\partial_{\nu^-_{A^*}}$ 
is the outer co-normal derivative, relative to $A^*$, for the lower half-space.
Let  $\langle\cdot,\cdot\rangle_{\mathbb{R}^{n+1}_-}$ denote the distributional pairing 
between continuous functions and measures in the lower half-space 
$\mathbb{R}^{n+1}_-$.
Combining \eqref{eq7.22} with the definitions of $b_Q^0$ and $\Theta_t^0$, 
and recalling that  
$\overline{E^*(y,s,x,t)}=E(x,t,y,s)$,  we have for each fixed $(x,t)\in \uh$,
\begin{multline*}
\Theta_t b_Q(x) =\, \Theta_t^0b^0_Q(x)\,+\,\Theta'_tb'_Q(x)\\[4pt]=\, 
t^2 |Q| \,(\partial_t)^3 \int_{\rn} \left(\overline{E^*(y,0,x,t)}\,\left(\partial_{\nu^-_{A}} F_Q\right)(y,0)\,-\,
\overline{\left(\partial_{\nu^-_{A^*}} E^*\right)(y,0,x,t)}
\, F_Q(y,0)\right) \, dy\\[4pt]
=\,-\, t^2 |Q| \,(\partial_t)^3 
\Big(\left\langle \overline{E^*(\cdot,\cdot,x,t)}, LF_Q\right\rangle_{\mathbb{R}^{n+1}_-}
\,-\,\iint_{\mathbb{R}^{n+1}_-}\overline{L^* E^*(y,s,x,t)}\, F_Q(y,s) \, dyds\Big)\\[4pt]
= \, t^2 |Q| \,(\partial_t)^3 \overline{E^*(X_Q^-,x,t)} =\, t^2 |Q| \,(\partial_t)^3 E(x,t,X_Q^-) \,,
\end{multline*}
where we have used the definition of $F_Q$, \eqref{eq7.4a}, and the fact that, in the lower half-space, 
$L^*E^*(\cdot,\cdot, x,t)=0=LE(\cdot,\cdot, X_Q^+)$.  The preceding formal argument may be justified by introducing a smooth cut-off adapted to a ball of radius $R$, and eventually letting $R\to\infty$;
there is sufficient decay at infinity to justify the limiting procedure.    
Therefore, by
\eqref{eq7.6}, with $j=3$, we have that for $(x,t) \in R_Q:=Q\times (0,\ell(Q))$,
$$|\Theta_t b_Q(x)| \,\lesssim \, \tau^{-n-2} \left(\frac{t}{\ell(Q)}\right)^{2}\,.$$
Consequently,
$$\iint_{R_Q} |\Theta_t b_Q(x)|^2 \frac{dx dt}{t}
 \,\lesssim \, \tau^{-2n-4}\, |Q| \int_0^{\ell(Q)}\left(\frac{t}{\ell(Q)}\right)^{4}\frac{dt}{t}\,\approx\,
 \tau^{-2n-4}\, |Q|\,,$$
 which yields \eqref{6.10} with $C_0 \approx \tau^{-2n-4}$.  
 
 We now turn to the proofs of \eqref{6.11}-\eqref{6.12}.  We begin by defining the measure
 $\mu_Q$.  Let $\omega$ be a small, positive constant, to be chosen, and let $\phi_Q:\rn\to[0,1]$
 be a smooth bump function, supported in $(1+\omega)Q$, with $\phi_Q\equiv 1$ on $(1/2)Q$,
 where for any positive constant $\kappa$, we let $\kappa Q$ denote the cube 
 of side length $\kappa\ell(Q)$, concentric with $Q$.  Clearly, we may choose $\phi_Q$
 so that $\|\nabla\phi_Q\|_{L^\infty(\rn)}\leq 2\ell(Q)^{-1}$, with $\phi_Q\gtrsim \omega$
 on $Q$;  thus \eqref{6.13a} holds with $C_0 = 2$, and \eqref{6.14a} 
 holds with $C_0 \approx 1/\omega$.   In accordance with Definition 
\ref{def6.12}  we then set $d\mu_Q:= \phi_Q dx$.
Let $\Phi_Q:\mathbb{R}^{n+1}\to [0,1]$ be a smooth extension of
of $\phi_Q$, i.e., $\Phi_Q(x,0)= \phi_Q(x)$, with $\Phi_Q$ supported
in $I_{(1+\omega)Q}$, and
$\Phi_Q\equiv 1$ in $I_{(1/2)Q}$, where in general, for any cube $Q\subset \rn$, we let
$I_Q:= Q\times (-\ell(Q),\ell(Q))$ denote the ``two-sided Carleson box".
We choose the extension $\Phi_Q$ so that
\begin{equation*}
\|\nabla\Phi_Q\|_{L^\infty(\mathbb{R}^{n+1})}\leq\,2\,\ell(Q)^{-1}\,.
\end{equation*}

We note that, by \eqref{eq7.22a},
\begin{equation}\label{eq7.24}
\int_{\rn\setminus Q}|b_Q|\,\phi_Q
\,\leq\, \big((1+\omega)Q\setminus Q\big)^{1/2} \|b_Q\|^2_{L^2(\rn)}
\,\leq\,C \, \omega^{1/2} \tau^{-n/2}\, |Q|\,.
\end{equation}
Observe also that 
\begin{multline*}
\int_{\rn} b_Q^0\, d\mu_Q \,=\,\int_{\rn} b_Q^0\, \phi_Q\, =\,
|Q| \int_{\rn} \left(\partial_{\nu^-_A}F_Q\right)(y,0)\,\phi_Q(y)\, dy\\[4pt]
=\, |Q|\left(-\,\left\langle\Phi_Q, LF_Q\right\rangle_{\mathbb{R}^{n+1}_-}\,+\,\iint_{\mathbb{R}^{n+1}_-}A\nabla F_Q\cdot\nabla \Phi_Q\right)\,=:\,|Q|\,\big(I + II\big)\,.
\end{multline*}
By definition of $F_Q$ and $\Phi_Q$, 
$$I \,= \,\Phi_Q(X_Q^-)\,=\, 1\,.$$
Also, by the construction of $\Phi_Q$, we have that
\begin{multline*}
|II| \lesssim \frac1{\ell(Q)} \iint_{I_{(1+\omega)Q}\setminus I_{(1/2)Q}} |\nabla F_Q|\\[4pt]
\lesssim\, \ell(Q)^{(n-1)/2}\left( \iint_{I_{(1+\omega)Q}\setminus I_{(1/2)Q}} |\nabla F_Q|^2\right)^{1/2}
\lesssim\,  \ell(Q)^{(n-3)/2}\left( \iint_{I_{2Q}\setminus I_{(1/4)Q}} | F_Q|^2\right)^{1/2}\\[4pt]
=\,  \ell(Q)^{(n-3)/2}\left( \iint_{I_{2Q}\setminus I_{(1/4)Q}} \left|\int_{-\tau\ell(Q)}^{\tau\ell(Q)}
\partial_t E(y,s,x_Q,t)\, dt \right|^2dyds\right)^{1/2}\,\lesssim\, \tau\,,
\end{multline*} 
where the implicit constants depend only on dimension, ellipticity, and the DGN constants, and
where in the last three steps we have used Caccioppoli's inequality, the definition of $F_Q$, and
\eqref{eq7.6} with $j=1$.  Combining our estimates for terms $I$ and $II$, we obtain
$$\mathcal{R}e \int_{\rn} b_Q^0\, d\mu_Q \,\geq\,|Q|\big(1-C\tau\big)\,.$$  
In conjunction with \eqref{eq7.24}, the latter bound implies
$$\mathcal{R}e \int_{Q} b_Q^0\, d\mu_Q\, \geq\,|Q|\big(1-C\tau -C\omega^{1/2} \tau^{-n/2}\big)\,
\geq\, \frac12 |Q|\,,$$  
if we set $\omega:= \tau^{n+2}$, and choose $\tau$ sufficiently small.
Thus, we obtain \eqref{6.11} with $\sigma=1/2$.  

Finally, we verify \eqref{6.12}.  By  definition of $b_Q'$, $\mu_Q$, $\phi_Q$, and $F_Q$,
we have
\begin{multline*}
\left|\int_{\rn} b_Q' d\mu_Q\right| \, = |Q|\,\left|\int_{\rn} \nabla_y F_Q(y,0)\, \phi_Q(y)\, dy \right|
\,=\,  |Q|\,\left|\int_{\rn}  F_Q(y,0) \,\nabla_y \phi_Q(y)\, dy \right|\\[4pt]
\leq C\,\ell(Q)^{n-1}\,\int_{(1+\omega)Q\setminus (1/2)Q}  \left|\int_{-\tau\ell(Q)}^{\tau\ell(Q)}
\partial_tE(y,0,x_Q,t)\, dt\right|\, dy \,\leq\, C\tau\, |Q|\,,
\end{multline*} 
where in the last step we have used \eqref{eq7.6} with $j=1$.  Combining the latter 
estimate with  \eqref{eq7.24}, we have
$$\left|\int_{Q} b_Q' d\mu_Q\right| \,\leq C\big(\tau +\omega^{1/2} \tau^{-n/2} \big)\,|Q|\,
\leq \, C\tau\,|Q|\,,$$
by our choice of $\omega=\tau^{n+2}$.  Since the constant $\tau$ is at our disposal
(cf. Remark \ref{remark6.3}), 
we obtain \eqref{6.12}, with $\eta\approx \tau$.  This concludes the proof of our application,
Theorem \ref{t7.1}.
\end{proof}

\section{Appendix:  
proof of the generalized Christ-Journ\'e $T1$ Theorem for square functions}\label{st1}

In this Appendix, we give the proofs of Theorems \ref{3.T1theorem} and \ref{4.T1theorem}.
In fact, it will suffice to prove the latter, since taking $H=L^2$ then yields the former.
The proof will follow that of the $T1$ theorem of \cite{CJ};  indeed, the lack of pointwise
kernel bounds, and the fact that our square function acts only on
elements of $H$, do not present serious obstacles.

\begin{proof}[Proof of Theorem \ref{4.T1theorem}]
We consider
\begin{equation*}
\iint_{\uh}|\Theta_t h(x)|^2\frac{dxdt}{t}\,,
\end{equation*}
with $h\in H$, where $H$ is a subspace of $L^2(\rn,\CC^m)$.   
Let us set
$$ d\mu(x,t):= |\Theta_t 1(x)|^2\frac{dxdt}{t}\,,$$
so that, by hypothesis, we have the Carleson measure estimate
$$\|\mu\|_{\mathcal{C}}:= \sup_Q\frac1{|Q|}\iint_{R_Q} d\mu(x,t) <\infty\,.$$
We follow the
familiar idea of Coifman-Meyer to write
$$\Theta_t h\,=\,\Big(\Theta_t -(\Theta_t1)A_t P_t\Big) h
 \,+\, (\Theta_t1) A_tP_th\,=:\, R_t h +(\Theta_t1) P_th\,,$$
where as usual, $A_t$ is the dyadic averaging operator, and 
$P_t$ is a nice approximate identity.  By Carleson's embedding lemma,
and the well known non-tangential estimate for $A_tP_t h$, 
$$\iint_{\uh}|(\Theta_t 1)A_tP_th(x)|^2\frac{dxdt}{t} \,\lesssim \,\|\mu\|_{\mathcal{C}}\|h\|_2^2\,,$$
as desired (we remark that for this term, the estimate holds for all $h \in L^2(\rn,\CC^m)$.)
Moreover, 
$$\iint_{\uh}|R_t h(x)|^2\frac{dxdt}{t} \,\lesssim \, \|h\|_2^2\,,\qquad \forall h\in H\,,$$
by Corollary \ref{c3.11}:  indeed,
the present $R_t$ is precisely the same (up to a minus sign) as $R_t^{(2)}$ considered 
in \eqref{eq3.remainderdef}.
\end{proof}


\begin{thebibliography}{99}\label{bib}



\bibitem[AAAHK]{AAAHK} M. A. Alfonseca, P. Auscher, A. Axelsson, S. Hofmann, and S. Kim,
{\it Analyticity of Layer Potentials and $L\sp 2$ Solvability of Boundary Value Problems for Divergence Form Elliptic Equations with Complex $L\sp \infty$ Coefficients}, 
 Advances in Math. 226 (2011), 4533-4606..

\bibitem[A] {A} P. Auscher, {\it On necessary and sufficient conditions for $L^p$-estimates of Riesz Transforms associated  to elliptic operators on $\mathbb{R}^n$ and related  estimates}, Memoirs of the American Mathematical Society, volume 186, number 871, March 2007.

\bibitem[A2]{A2} P. Auscher, Regularity theorems and heat kernel for elliptic operators. {\it J. London Math. Soc.} (2) {\bf 54}  (1996),  no. 2, 284--296.

\bibitem[AA]{AA} P. Auscher and A. Axelsson,  Weighted maximal regularity estimates
and solvability of non-smooth elliptic systems I, {\it  Invent. Math.} 184 (2011),  47--115.

\bibitem[AAMc]{AAMc} P. Auscher, A. Axelsson, and A. McIntosh,
Solvability of elliptic systems with square integrable boundary data,
{\it Ark. Mat.} {\bf 48} (2010), 253-287.

\bibitem[AHLMcT]{AHLMcT} P. Auscher, S. Hofmann, M. Lacey, A. McIntosh, and P. Tchamitchian, {\it The solution of the Kato Square Root Problem for Second Order Elliptic operators on $\rn$}, Annals of Math. 156 (2002), 633–654.

\bibitem [AHMTT]{AHMTT} P. Auscher, S. Hofmann, C. Muscalu, T. Tao, C. Thiele, {\it Carleson measures,trees,extrapolation, and $T (b)$ theorems}, Publ. Mat. 46 (2002), no. 2, 257–325.

\bibitem [AMcT] {AMcT} P. Auscher, A. McIntosh and P. Tchamitchian, {\it Heat kernel of complex elliptic operators and applications}, J. Funct. Anal. \textbf{152}(1998), 22-73.

\bibitem [AR] {AR} P. Auscher, E. Routin, {\it Local Tb Theorems and Hardy inequalities}, http://arxiv.org/abs/1011.1747.

\bibitem [AT] {AT} P. Auscher, P. Tchamitchian, {\it Square root problem for divergence operators and related topics}, Ast\'{e}risque Vol. \textbf{249}(1998)m soci\'{e}t\'{e} Math\'{e}matique de France.

\bibitem [AY] {AY} P. Auscher, Qi Xiang Yang, {\it BCR algorithm and th T(b) theorem}, Publ. Math.\textbf{53} (2009), no. 1, 179196.


\bibitem[Ch]{Ch} M. Christ, {\it A $T(b)$ theorem with remarks on analytic capacity and the Cauchy integral}, Colloquium Mathematicum LX/LXI (1990) 601-628.

\bibitem[CJ]{CJ} M. Christ and  J.-L. Journ\'{e}, {\it Polynomial growth estimates for multilinear singular integral operators}.   Acta Math.  {\bf 159}  (1987),  no. 1-2, 51--80.

\bibitem[CMS]{CMS} R. R. Coifman, Y. Meyer, and E. M. Stein,
{\it Some New Function Spaces and Their Applications to Harmonic Analysis}, J. Funct. Anal. 62 (1985), 304--335.

\bibitem[CR]{CR} R.Coifman and R. Rochberg, {\it Another characterization of B.M.O.}, Proc. Amer. Math. Soc \textbf{79},249-254.

\bibitem[C-UMP]{C-UMP} D. Cruz-Uribe, J.M Martell and C. P\'{e}rez, {\it Extensions of Rubio de Francia's extrapolation theorem},Proceedings of the 7th International Conference on Harmonic Analysis and Partial Differential Equations (El Escorial 2004), Collect. Math.2006,195-231..

\bibitem[DJS]{DJS}	G. David, J.-L. Journ\'{e}, and S. Semmes, {\it Op\'{e}rateurs de Calder\'{o}n-Zygmund, fonctions para-accr\'{e}tives et interpolation}, Rev. Mat. Iberoamericana 1 1–56, 1985.

\bibitem[D] {D} E.B. Davies, {\it Uniformly elliptic operators with measurable coefficients}, J. Funct. Anal. \textbf{132} (1995), 141-169.

\bibitem[DRdeF]{DRdeF} J. Duoandikoetxea, J. L. Rubio de Francia, 
{\it Maximal and singular integral operators via Fourier transform estimates,} 
Inventiones Math 84 (1986), 541-561.

\bibitem[DeG]{DeG} E. De Giorgi, {\it Sulla differenziabilit\`a e l'analiticit\`a delle estremali degli integrali multipli regolari}, Mem. Accad. Sci. Torino. Cl. Sci. Fis. Mat. Nat. (3) 3  (1957) 25--43.


\bibitem[FKP]{FKP}  R. Fefferman, C. Kenig and J. Pipher,
{\it The theory of weights and the Dirichlet problem for elliptic equations}, Ann. of Math. (2)  134 (1991),  no. 1, 65--124. 

\bibitem[FS]{FS} C.  Fefferman and E. M. Stein,  $H\sp{p}$ spaces of several variables.  
{\it Acta Math.}  {\bf 129}  (1972), no. 3-4, 137--193.



\bibitem[GR]{GR} J. Garcia Cuerva and J. Rubio de Francia, {\it Weighted norm inequalities and related topics}, North-Holland, 1985.

\bibitem[GM]{GM} A. Grau de la Herr\'{a}n and M. Mourgoglou, {\it A Tb theorem for square functions in domains with Ahlfors-David regular boundaries}, to appear in the Journal of Geometric Analysis.


\bibitem[H1]{H1} S. Hofmann, {\it Local T(b) theorems and application in PDE}, Harmonic analysis and partial differential equations 29-52, Contemp. Math, 505, Amer. Math. Soc., Providence, RI, 2010.

\bibitem[H2]{H2} S. Hofmann, {\it A Local $Tb$ Theorem For Square Functions}, Perspectives in partial differential equations, harmonic analysis and applications, 175–185, Proc. Sympos. Pure Math., 79, Amer. Math. Soc., Providence, RI, 2008.

\bibitem[H3]{H3} S. Hofmann, {\it A proof of the local Tb theorem for standard Calderon-Zygmund operators}, unpublished manuscript (2007), http://arxiv.org/abs/0705.0840.2

\bibitem[H4]{H4} S. Hofmann, {\it Local $Tb$ Theorem For Square Functions and Application in PDE}, Proceedings of the ICM Madrid 2006.

\bibitem[HKMP]{HKMP} S. Hofmann, C. Kenig, S. Mayboroda, and J. Pipher, {\it Square function/Non-tangential maximal
estimates and the Dirichlet problem for
non-symmetric elliptic operators,}   preprint. 

\bibitem[HK]{HK} S. Hofmann and S. Kim, {\it The Green function estimates for strongly elliptic systems of second order}, Manuscripta Math. 124 (2007), no. 2, 139-172.

\bibitem[HK2]{HK2} S. Hofmann and S. Kim, Gaussian estimates for fundamental solutions to certain parabolic systems, {\it Publ. Mat.} {\bf 48}  (2004),  no. 2, 481--496.


\bibitem[HLMc]{HLMc} S. Hofmann, M. Lacey and A. McIntosh, {\it The solution of the Kato problem for divergence form elliptic operators with Gaussian heat kernel bounds}, Annals of Math. 156 (2002), pp 623-631.

\bibitem[HMaMo]{HMaMo} S. Hofmann, S. Mayboroda, and M. Mourgoglou,
$L^p$ and endpoint solvability results for divergence form elliptic
equations with complex $L^{\infty}$ coefficients, preprint.

\bibitem[HMar]{HMar} S.Hofmann and J.M. Martell, {\it Uniform rectifiability and harmonic measure I:
Uniform rectifiability implies Poisson kernels in $L^p$}, preprint.

\bibitem[HMarUT]{HMarUT} S.Hofmann, J.M. Martell and I. Uriarte-Tuero, {\it Uniform rectifiability and harmonic measure II:  Poisson kernels in $L^p$ imply uniform rectfiability}, preprint.

\bibitem[HMc]{HMc} S. Hofmann and A. McIntosh, {\it The solution of the Kato problem in two dimensions}, Proceedings of the Conference on Harmonic Analysis and PDE held in El Escorial, Spain in July 2000, Publ. Mat. Vol. extra, 2002 pp. 143-160.

\bibitem[HMiMo]{HMiMo} S. Hofmann, M. Mitrea, and A. Morris, manuscript in preparation.

\bibitem[HM]{HM} S. Hofmann and M. Mitrea, {\it Boundary Value Problems and the Method of Layer Potentials for Elliptic Operators with $L^{\infty}$ Coefficients} preprint.

\bibitem[HyM]{HyM} T. Hyt\"onen, H. Martikainen, {\it On general local Tb Theorems}, http://arxiv.org/abs/1011.0642

\bibitem[HyN]{HyN} T. Hyt\"onen, F. Nazarov, {\it
The local Tb theorem with rough test functions}, http://arxiv.org/abs/1206.0907 .







\bibitem[McM]{McM} A. McIntosh and Y. Meyer, {\it Alg\`ebres d’op\'{e}rateurs d\'{e}finis par des int\'{e}grales singuli\`eres}, C. R. Acad. Sci. Paris 301 S\'{e}rie 1 395–397, 1985.

\bibitem[Me]{Me}  N. G. Meyers, {\it Mean oscillation over cubes and H\"{o}lder continuity}, Proc. Amer. Math. Soc. 15 (1964) 717--721.

\bibitem[Me2]{Me2}  N. G. Meyers, {\it An $L^p$ estimate for the gradient
of solutions of second order
elliptic divergence equations}, 
Ann. Scuola Norm. Sup. Pisa 17 (1963), 189-206.



\bibitem[M]{M} J. Moser, On Harnack's theorem for elliptic differential equations, {\it Comm. Pure Appl. Math.} {\bf 14}  (1961) 577--591.

\bibitem[N]{N} J. Nash, Continuity of solutions of parabolic and elliptic equations, {\it Amer. J. Math.} {\bf 80}  (1958) 931--954.

\bibitem[NTV]{NTV} F. Nazarov, S. Treil and A. Volberg, {\it Accretive system Tb-theorems on nonhomogeneous spaces}, Duke Math. J. 113 (2002), no. 2, 259–312.

\bibitem[R]{R} A. Rosen, {\it Layer potentials beyond singular integral operators}, preprint

\bibitem[Se]{Se} S. Semmes, {\it Square function estimates and the T(b) Theorem}. Proc. Amer. Math. Soc. 110 (1990), no. 3, 721–726.

\bibitem[St]{St}  E. M. Stein, {\it Harmonic Analysis: Real-Variable Methods, Orthogonality, and Oscilatory Integrals}, Princeton, NJ: Princeton University Press, 1993.

\bibitem[St2]{St2} E.M. Stein, {\it Singular integrals and differentiability properties of functions}, Princeton, NJ: Princeton University Press, 1971.

\bibitem[SW]{SW} E.M. Stein and G. Weiss, {\it Interpolation of operators with change of measures}, Trans. Amer. Math. Soc. \textbf{87},(1958),159-172.

\bibitem[TY]{TY} C.Tan, L. Yan, {\it Local Tb theorem on spaces of homogeneous type}, Z. Anal. Anwend.\textbf{28} (2009), no.3, 333-347.

\bibitem[V]{V} G. Verchota, Layer potentials and regularity for the Dirichlet problem for Laplace's equation in Lipschitz domains. {\it J. Funct. Anal.} {\bf 59}  (1984),  no. 3, 572--611.

\end{thebibliography}
\end{document}